\def\re{\mathbb{R}}
\newtheorem{thm}{Theorem}
\newtheorem{algorithm}{Algorithm}
\newtheorem{subroutine}{Subroutine}
\newtheorem{prop}{Proposition}
\newtheorem{lemma}{Lemma}
\newenvironment{proof}[1][Proof]{\begin{trivlist}
\item[\hskip \labelsep {\bfseries #1}]}{\end{trivlist}}
\begin{document}

\begin{frontmatter}



\title{A Survey of Algorithms for Separable Convex Optimization with Linear Ascending 
Constraints\tnoteref{t1}}
\tnotetext[t1]{This work was supported by the Indo-French Centre for Promotion of Advanced Research 
under project number 5100-IT1. The funding agency provided an unrestricted research grant. It was 
neither involved in the study design, nor in the collection, analysis and interpretation of 
data, nor in the writing of the report, nor in the decision to submit the article for publication.}

\author[ece2]{P T Akhil\corref{cor1}}
\ead{akhilpt@ece.iisc.ernet.in}
\cortext[cor1]{Corresponding author}

\author[ece1]{Rajesh Sundaresan}


\address[ece2]{Department of Electrical Communication Engineering, Indian Institute of Science, 
Bangalore 560012, India.}
 \address[ece1]{Department of Electrical Communication Engineering and Robert Bosch Centre for 
Cyber Physical Systems, Indian Institute of Science, Bangalore 560012, India.}

\begin{abstract}
The paper considers the minimization of  a separable convex function  subject to linear ascending 
constraints. The problem arises as the core optimization in several resource allocation scenarios, 
and is a special case of an optimization of a separable convex function over the bases of a 
polymatroid with a certain structure. The paper presents a survey of state-of-the-art algorithms 
that solve this optimization problem. The algorithms are applicable to the class of separable 
convex objective functions that need not be smooth or strictly convex. When the objective function 
is a so-called $d\operatorname{-}$separable function, a simpler linear time algorithm solves the 
problem. 
\end{abstract}

\begin{keyword}

Convex programming \sep OR in telecommunications \sep ascending constraints \sep linear constraints 
\sep polymatroid.

\MSC[2010] 90C25 \sep 90C56 \sep 52A41

\end{keyword}

\end{frontmatter}
\pagestyle{myheadings}
\thispagestyle{plain}
\markboth{P. T. AKHIL AND R. SUNDARESAN}{CONVEX OPTIMIZATION WITH ASCENDING CONSTRAINTS}

\section{Introduction}
\label{sec:intro}
In this paper we consider the following separable convex optimization problem with linear inequality 
constraints. The problem arises in a wide variety of resource allocation settings and we highlight 
several immediately after stating the abstract problem. Let $x=(x(1),x(2),\cdots,x(n)) \in 
\mathbb{R}^{n}$. Let $w_{e}:[0,b_{e}) \rightarrow \overline{\mathbb{R}}$, $e=1,2,\cdots,n$ be  
convex  functions where $0 < b_{e} \leq \infty$ and $\overline{\mathbb{R}} = \mathbb{R} \cup \{ 
-\infty, +\infty \}$ be the extended real line. We wish to minimize a separable objective function 
$W:\mathbb{R}^{n} \rightarrow \overline{\mathbb{R}}$ as in

\begin{align}\label{eqn:minimizeSumSeparableConvexFunctions}
 \text{Problem}~~ \Pi :~~ \text{Minimize}~~~~  & W(x) := \sum_{e=1}^{n}w_{e}(x(e)) \\
  \label{eqn:positivityBounded}
  \text{subject to} ~~~    & x(e)  \in  [0, \beta(e)], ~~~~~~~~e=1,2,\cdots,n,\\
   \label{eqn:ladderConstraint}
                            & \sum_{e=1}^{l} x(e)  \geq  \sum_{e=1}^{l} \alpha(e),
 ~~l=1,2,\cdots,n-1, \\
   \label{eqn:equalityConstraint}
                            &  \sum_{e=1}^{n} x(e) =  \sum_{e=1}^{n} \alpha(e).
\end{align}
We assume $\beta(e) \in (0,b_{e}]$ for $e = 1,2,\cdots,n$, and $\alpha(e) \geq 0$ for 
$e=1,2,\cdots,n$. The inequalities in (\ref{eqn:positivityBounded}) impose positivity and upper 
bound constraints. The inequalities in (\ref{eqn:ladderConstraint}) impose a sequence of {\it 
ascending constraints} with increasing heights $\sum_{e=1}^l \alpha(e)$ indexed by $l$. We also 
assume
\begin{equation}
   \label{eqn:commonSense1}
   \sum_{e=1}^l \alpha(e) \leq \sum_{e=1}^l \beta(e), ~~~~~~~l=1,2,\cdots,n,
 \end{equation}
a necessary and sufficient condition for the feasible set to be nonempty (Lemma \ref{lemma:fb} of 
the appendix).

Problem $\Pi$ arises in the following applications in operations research.
\begin{itemize}
 \item An inventory problem with downward substitution (Wang \cite{201410OL_Ziz}): A firm produces a 
product with $n$ different grades. A higher grade of the product can be substituted for a lower 
grade. The firm has to make an inventory decision on the number of grade $i$ product to stock before 
the demand is known. The objective is to minimize the loss incurred due to mismatch between the 
demand and the supply of products of each grade.
\item The Clark-Scarf series multiechelon inventory model (Clark and Scarf \cite{1960xxMS_ClaSca}): 
The inventory model consists of $n$ facilities arranged in series. The demand is a random variable 
with known distribution of finite variance. The demand is first served at facility $n$ with the 
excess demand at facility $n$ passed on to facility $n-1$, and so on. There are costs involved in 
passing the demand to the next facility. There are also storage costs at the facilities. The problem 
is to find the amount to stock at each of the $n$ facilities. The objective is to minimize the cost 
incurred due to the failures in meeting the demands at the facilities and the storage costs.
\end{itemize}
In addition to the above mentioned cases in operations research, problem $\Pi$ arises in certain 
resource allocation problems in wireless communications where several mobiles simultaneously access 
a common medium. A high-level description is as follows. Each mobile transmits with a certain power 
(measured in joules per second) on a one-dimensional subspace of the available signaling vector 
space. The dimension of the signaling vector space is fewer than the number of mobiles, and 
orthogonalization of mobile transmissions is not possible. If two mobiles' signaling directions are 
not orthogonal, they will interfere with each other and affect each other's transmissions. Problem 
$\Pi$ arises in the optimal allocation of directions (one-dimensional subspaces) to mobiles in each 
of the following settings.
\begin{itemize}
  \item Mobiles have rate requirements (in bits per second) that must be met and the net transmitted 
power is to be minimized (Padakandla and Sundaresan \cite{200910TCOM_PadSun}).
  \item Mobiles have quality of service requirements (in terms of minimum signal to interference and 
noise ratios) that must be met and again the net transmitted power is to be minimized (Viswanath and 
Anantharam \cite[Sec. III]{200206TIT_VisAna}).
  \item Mobiles have maximum transmission power constraints and the total rate achieved across all 
mobiles is to be maximized (Viswanath and Anantharam \cite[Sec. II]{200206TIT_VisAna}).
\end{itemize}
Problem $\Pi$ also arises in the optimization of wireless multiple input multiple output (MIMO) 
systems as follows.
\begin{itemize}
\item Minimize power to meet mean-squared-error quality of service constraints on each of the 
datastreams in a point to point MIMO communication system (Lagunas et al. 
\cite{200405TSP_PalLagCio}).
\item Minimize power in the context of linear transceiver design on MIMO networks with a single 
non-regenerative relay (Sanguinetti and D'Amico \cite{201205TSP_SanDam}). 
\end{itemize}
Problem $\Pi$ also arises in an inequality constrained maximum likelihood estimation problem where 
the parameters of a multinomial distribution are to be estimated subject to the constraint that the 
associated distribution is stochastically smaller than a given distribution (Frank et al. 
\cite{1966xx_HanBruFraHog}).

paper is to go beyond $\mathcal{C}^1$ functions.

The wide range of applications indicated above motivate us to study the abstracted problem $\Pi$ in 
some detail.

The special case $\alpha(1)=\alpha(2)=\cdots=\alpha(n-1)=0$ makes the constraint in 
(\ref{eqn:ladderConstraint}) irrelevant, and problem $\Pi$ reduces to the well-studied minimization 
of a separable convex cost in (\ref{eqn:minimizeSumSeparableConvexFunctions}) subject to boundedness 
and sum constraints, i.e., (\ref{eqn:positivityBounded}) and (\ref{eqn:equalityConstraint}), 
respectively. See Patriksson \cite{200802AEJOR_Pat} for a bibliographical survey of such problems 
with historical remarks and comments on solution approaches.

Morton et al. \cite{1985xxMP_MorRanRin} studied the special case of problem $\Pi$ when $w_{e}(t) = 
\lambda(e)t^{p},~ e=1,2,\cdots,n$, where $p>1$. They characterized the constraint set as the {\it 
bases} of a {\it polymatroid}; we will define these terms soon. The algorithms to minimize a 
separable convex function over the bases of a polymatroid fall into two broad categories: greedy 
algorithms and decomposition algorithms. In the greedy category, the algorithm due to Federgruen 
and 
Groenevelt \cite{1986xxOR_FedGro} has a complexity $\mathcal{O}(B\cdot(\log\,n + F))$, where $B$ is 
the total number of units to be allocated among the $n$ variables and $F$ is the number of steps 
required to check the feasibility at each step of the greedy update. Hochbaum \cite{1994xxMOR_Hoc} 
proposed a variant of the greedy algorithm that uses a scaling technique to reduce the complexity 
to 
$\mathcal{O}\left(n\cdot(\log\,n+F)\cdot\log\left(B/(n\epsilon)\right)\right)$. Hochbaum 
\cite{1994xxMOR_Hoc} points out that $F$ takes $\mathcal{O}(1)$ time for the case of linear 
ascending constraints. 

The algorithms in the decomposition category use a divide and conquer approach. This class of 
algorithms divide the optimization problem into several subproblems which are easier to solve. 
Groenevelt \cite{1991xxEJOR_Gro} provided two algorithms of this variety. Fujishige in 
\cite[Sec.~8]{2003xxELS_Fuj} extended Groenevelt's algorithms to minimize over the bases of a more 
general ``submodular system''. Fujishige's decomposition algorithm requires an oracle for a 
particular step of the algorithm. The oracle gives a base satisfying certain conditions. This oracle 
depends on the particularity of the problem. For example, Groenevelt's decomposition algorithm 
\cite{1991xxEJOR_Gro} is for symmetric polymatroids. Morton et al. \cite{1985xxMP_MorRanRin} 
exploited the even more special structure in the polymatroid arising from the constraints in 
(\ref{eqn:positivityBounded})-(\ref{eqn:equalityConstraint}) and identified an explicit algorithm 
that obviated the need for an oracle. Their procedure however was for the special case of $w_{e}(t) 
= \lambda(e)t^{p}, ~p>1$, as indicated at the beginning of this paragraph. Padakandla and Sundaresan 
\cite{200908SIAM_PadSun} provided an extension of this algorithm to strictly convex 
$\mathcal{C}^{1}$ functions that satisfy certain slope conditions. Their proof of optimality is via 
a verification of Karush-Kuhn-Tucker conditions. Akhil, Singh and Sundaresan 
\cite{2014xxNCC_AkhSinSun} simplified the algorithm of \cite{200908SIAM_PadSun} and relaxed some of 
the constraints imposed by Padakandla and Sundaresan \cite{200908SIAM_PadSun} on the objective 
function. See D'Amico et al. \cite{201411_DamSanPal} for a similar extension of the algorithm of 
\cite{200908SIAM_PadSun}. The complexity of the algorithms of Padakandla and Sundaresan 
\cite{200908SIAM_PadSun} and Akhil, Singh, and Sundaresan \cite{2014xxNCC_AkhSinSun} are at least 
$\mathcal{O}(n^2)$ because a certain nonlinear equation is solved $n^2$ times. Zizhuo Wang's 
algorithm \cite{201410OL_Ziz} reduces the number of times the non-linear equation is solved by a 
factor of $n$. Vidal et al. \cite{201404_VidJaiMac} proposed a decomposition algorithm that solves 
problem $\Pi$ in same number of steps as Hochbaum's greedy algorithm \cite{1994xxMOR_Hoc}.
$\mathcal{O}\left(n\cdot\log\,n\cdot\log\left(B/(n\epsilon)\right)\right)$. 

In this paper, we provide a brief description of the greedy algorithm of Hochbaum 
\cite{1994xxMOR_Hoc} and the decomposition algorithm of Vidal et al \cite{201404_VidJaiMac}. We also 
extend the algorithm of \cite{2014xxNCC_AkhSinSun} to a wider class of separable convex functions, 
such as negatives of piece-wise linear concave utility functions which commonly arise in the 
economics literature. The extended algorithm proposed in this paper works for any convex $w_e$, in 
particular, they need not be strictly convex or differentiable. The proof of correctness of the 
algorithm employs the theory of polymatroids, which is summarized in the next section. 

The complexity of our algorithm for the more general $w_e$ is same as that of Padakandla and 
Sundaresan \cite{200908SIAM_PadSun}, and indeed, our algorithm reduces to that of 
\cite{200908SIAM_PadSun} when $w_e$ are $\mathcal{C}^1$ functions. We therefore refer the reader to 
that paper \cite[Sec.~1]{200908SIAM_PadSun} for a discussion on the relationship of the algorithm to 
those surveyed by Patriksson \cite{200802AEJOR_Pat}, and for performance comparisons with a standard 
optimization tool (\cite[Sec.~4]{200908SIAM_PadSun}).

The decomposition approach leads to an efficient algorithm for a special case of problem $\Pi$ where 
the objective function has the following form:
\begin{equation}\label{eqn:speobj}
W(x)=\sum_{e=1}^{n}d_e\, \phi\left(\frac{x(e)}{d_e}\right).
\end{equation}
The minimizer of (\ref{eqn:speobj}) over the bases of a polymatroid is known to be the {\it 
lexicographically optimal} base with respect to the weight  vector $d=(d_1,d_2,\cdots,d_n)$ (a 
notion introduced by Fujishige in \cite{198005MOR_Fuj}). This lexicographically optimal base 
optimizes $W$ in (\ref{eqn:speobj}) arising from $\phi$ that is strictly convex and continuously 
differentiable. Hence it suffices to consider (\ref{eqn:speobj}) for the case of a quadratic 
$\phi$. 
Veinott Jr. \cite{1971xxMS_Vei} proposed an elegant geometrical characterization of this optimal 
base when the polymatroid is defined by linear ascending constraints. The geometrical 
characterization is that of a {\it taut-string solution} for the minimizer of the optimization 
problem from the graph of the least concave majorant of a set of $n$ points in $x\operatorname{-}y$ 
plane. Though Veinott Jr.'s computation of the least concave majorant of $n$ points requires 
$\mathcal{O}(n^2)$ steps, the string algorithm elucidated in Muckstadt and Sapra \cite[Sec. 
3.2.3]{2010xxSPR_MucSap} in the context of an inventory management problem, finds it in 
$\mathcal{O}(n)$ steps. We briefly discuss the {\it taut-string solution} \cite{1971xxMS_Vei} and 
the string algorithm of Muckstadt and Sapra \cite[Sec. 3.2.3]{2010xxSPR_MucSap}.

The rest of the paper is organized as follows. In the next section, we discuss the preliminaries 
related to polymatroids. In section \ref{sec:results}, we summarize the algorithm that solves 
problem $\Pi$ and state the main results. 
In sections \ref{sec:greedy} and \ref{sec:decompose}, we summarize Hochbaum's and Vidal's algorithms 
respectively. In section \ref{sec:string}, we discuss the {\it taut-string method}. Finally, in 
Section \ref{sec:fba}, we summarize the string algorithm that finds the least concave majorant of a 
set of points.

While our paper is largely a survey, it also contains some novel contributions. The extension to 
general $w_e$ (not necessarily strictly convex and/or not necessarily continuously differentiable 
everywhere in the domain), contained in section \ref{sec:results} and the Appendix, is new. The 
recognition that the string algorithm of Muckstadt and Sapra \cite[Sec. 3.2.3]{2010xxSPR_MucSap} is 
applicable to the problem of Veinott Jr. \cite{1971xxMS_Vei} is also novel.

\section{Preliminaries}
\label{sec:prelim}

In this section, we describe some preliminary results that reduce  problem $\Pi$ to an optimization 
over the  bases of an appropriate polymatroid. The reduction is due to Morton et al. 
\cite{1985xxMP_MorRanRin} and is given here for completeness. We then state a result due to 
Groenevelt \cite{1991xxEJOR_Gro} for polymatroids which was subsequently generalized to submodular 
functions by Fujishige \cite[Sec. 8]{2003xxELS_Fuj}. Groenevelt's result will provide a necessary 
and sufficient condition for optimality. The next section provides an algorithm to arrive at a base 
that satisfies the necessary and sufficient condition of Groenevelt \cite{1991xxEJOR_Gro}. We begin 
with some definitions.

Let $E= \{1,2,\ldots,n\}$. Let $f:2^{E}\rightarrow\mathbb{R}_{+}$ be a \emph{rank} function, i.e., a 
nonnegative real function on the set of subsets of $E$ satisfying
\begin{eqnarray}
f(\emptyset) & = & 0, \\
f(A) & \leq & f(B) ~~~~~~~~~~~~~~~~~~~~~~~~(A \subseteq B \subseteq E), \\
f(A) + f(B) & \geq & f(A \cup B) + f(A \cap B)~~~(A, B \subseteq E).
\end{eqnarray}
The pair $(E,f) $ is called a \emph{polymatroid} with ground set $E$.
For an $x \in \mathbb{R}_{+}^{E}$ and $A \subseteq E$ define
\[
x(A) := \sum_{e \in A} x(e).
\]
A vector $x \in \mathbb{R}_{+}^{E}$ is called an \emph{independent} vector if $x(A) \leq f(A)$ for 
every $A \subseteq E$. Let $P(f)$, the \emph{polymatroidal polyhedron}, denote the set of all 
independent vectors of $(E,f)$. The \emph{base} of the polymatroid $(E,f)$, denoted $B(f)$, is 
defined as
\[
  B(f) := \{ x \in P(f): x(E) = f(E)\}.
\]
These are the maximal elements of $P(f)$ with respect to the partial order ``$\leq$'' on the set 
$\mathbb{R}_{+}^{E}$ defined by component-wise domination, i.e., $x \leq y~  \text{if and only if}~ 
x(e) \leq y(e) ~\text{for every}~ e \in E$. We shall also refer to an element of the base of a 
polymatroid as a base.

For two real sequences $a = (a_1, a_2, \cdots, a_k)$ and $b = (b_1, b_2, \cdots, b_k)$ of same 
length $k$, $a$ is lexicographically greater than or equal to $b$ if for some $j \in \{ 1, 2, 
\cdots, k \}$ we have
\[
  a_i = b_i ~ (i = 1, 2, \cdots, j-1) ~~~~ \mbox{ and } ~~~~ a_j > b_j
\]
or
\[
  a_i = b_i ~ (i = 1, 2, \cdots, k).~~~~~~~~~~~~~~~~~~~~~~~~~~~~
\]
Let $x \in \re_+^{E}$ and let $T(x)$ be the $|E|$-length sequence arranged in the increasing order 
of magnitude. Let $h_e: \re_+ \rightarrow \re$ be a family of continuous and strictly increasing 
functions indexed by $e \in E$. Let $h: \re_+^E \rightarrow \re^E$ be the vector function defined 
as 
$h(x) := (h_e(x(e)), ~e \in E)$. A base $x$ of $(E, f)$ is an $h$-lexicographically optimal base if 
the $|E|$-tuple $T(h(x))$ is lexicographically maximum among all $|E|$-tuples $T(h(y))$ for all 
bases $y$ of $(E, f)$. Let $d \in \re_+^{E}$ with all components strictly positive. For the case 
$h_e=x(e)/d_e$, $h$-lexicographically optimal base is also known as the lexicographically optimal 
base with respect to the weight vector $d$.

For any $e \in E$, define
\[
 \textsf{dep}(x,e,f) = \cap{\{ A ~ | ~ e \in A \subset E, x(A)=f(A)\}},
\]
which in words is the smallest subset among those subsets $A$ of $E$ containing $e$ for which $x(A)$ 
equals the upper bound $f(A)$. Fujishige shows that $\textsf{dep}(x,e,f)-\{e\}$ is $\emptyset$ if 
$e$ and $x \in B(f)$ are such that $x(e)$ cannot be increased without making $x$ dependent. 
Otherwise, $\textsf{dep}(x,e,f)$ is made up of all those  $u \in E$ from which a small amount of 
mass can be moved from $x(u)$ to $x(e)$ yet keeping the new vector independent. Thus $(u,e)$ is 
called an {\it exchangeable pair} if $u \in \textsf{dep}(x,e,f)-\{e\}$.

For a $\beta \in \mathbb{R}_{+}^{E}$, define the set function
\[ f_{\beta}(A) = \min_{D \subseteq A}{\{ f(D) + \beta (A-D)\}} ~~ (A \subset E).
\]
We now state without proof an interesting property of the subset of independent vectors of a 
polymatroid that are dominated by $\beta$. See Fujishige \cite{2003xxELS_Fuj} for a proof.

\vspace*{.1in}
\begin{prop}
\label{propfuj}
The set function $f_{\beta}$ is a rank function and $(E,f_{\beta})$ is a polymatroid. Furthermore, 
$P(f_{\beta})$ is given by $P(f_{\beta}) = \{ x \in P(f): x \leq \beta \}$.
\end{prop}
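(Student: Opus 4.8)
The plan is to verify the three rank-function axioms for $f_{\beta}$ in turn (which simultaneously establishes that $(E,f_{\beta})$ is a polymatroid), and then prove the set identity for $P(f_{\beta})$ by a direct double inclusion. I would begin with the two cheap facts. Since $\emptyset$ is the only subset of itself, $f_{\beta}(\emptyset) = f(\emptyset) + \beta(\emptyset) = 0$; and nonnegativity is immediate because every candidate $f(D) + \beta(A - D)$ in the defining minimum is a sum of nonnegative terms, so their minimum is nonnegative.

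For monotonicity, given $A \subseteq B$, I would let $D_{B} \subseteq B$ attain the minimum defining $f_{\beta}(B)$ and then test the candidate set $D_{B} \cap A$ in the minimum defining $f_{\beta}(A)$. Monotonicity of $f$ gives $f(D_{B} \cap A) \leq f(D_{B})$, and the identity $A - (D_{B} \cap A) = A \setminus D_{B} \subseteq B \setminus D_{B}$ together with $\beta \geq 0$ gives $\beta(A - (D_{B} \cap A)) \leq \beta(B - D_{B})$. Hence the candidate value is at most $f_{\beta}(B)$, so $f_{\beta}(A) \leq f_{\beta}(B)$.

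The submodularity inequality is the crux of the argument and the step I expect to be the main obstacle. I would let $D_{A} \subseteq A$ and $D_{B} \subseteq B$ be minimizers for $f_{\beta}(A)$ and $f_{\beta}(B)$, and then test $D_{A} \cup D_{B}$ (a legitimate subset of $A \cup B$) and $D_{A} \cap D_{B}$ (a legitimate subset of $A \cap B$, since it lies in both $A$ and $B$). This upper-bounds $f_{\beta}(A \cup B) + f_{\beta}(A \cap B)$ by a sum that separates into an $f$-part and a $\beta$-part. The $f$-part, $f(D_{A} \cup D_{B}) + f(D_{A} \cap D_{B})$, is controlled directly by the submodularity of $f$. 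The delicate point is the $\beta$-part, where I would verify the exact modular identity
\[
\beta\big((A \cup B) - (D_{A} \cup D_{B})\big) + \beta\big((A \cap B) - (D_{A} \cap D_{B})\big) = \beta(A - D_{A}) + \beta(B - D_{B}).
\]
Because $\beta$ is additive, this reduces to an element-by-element check: using $D_{A} \subseteq A$ and $D_{B} \subseteq B$, each element $e$ contributes the common indicator count $\mathbf{1}[e \in A] + \mathbf{1}[e \in B] - \mathbf{1}[e \in D_{A}] - \mathbf{1}[e \in D_{B}]$ to both sides. Combining the two bounds yields $f_{\beta}(A \cup B) + f_{\beta}(A \cap B) \leq f_{\beta}(A) + f_{\beta}(B)$, completing the verification that $f_{\beta}$ is a rank function.

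For the final claim $P(f_{\beta}) = \{ x \in P(f) : x \leq \beta \}$, I would argue both inclusions from the definition of independence. For ``$\subseteq$'', if $x \in P(f_{\beta})$ then taking $D = A$ in the minimum shows $f_{\beta}(A) \leq f(A)$, whence $x(A) \leq f(A)$ for all $A$ and so $x \in P(f)$; evaluating at a singleton gives $f_{\beta}(\{e\}) = \min\{\beta(e), f(\{e\})\} \leq \beta(e)$, so $x(e) \leq \beta(e)$ and $x \leq \beta$. For ``$\supseteq$'', given $x \in P(f)$ with $x \leq \beta$, for each $A$ and each $D \subseteq A$ I would split $x(A) = x(D) + x(A - D) \leq f(D) + \beta(A - D)$, and minimize over $D$ to obtain $x(A) \leq f_{\beta}(A)$, hence $x \in P(f_{\beta})$. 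I anticipate no real difficulty in this last part once the minimum-based bounds are in place.
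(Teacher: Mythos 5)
Your proof is correct in every step: the rank-function axioms are verified cleanly, the element-wise indicator identity for the $\beta$-part of the submodularity argument checks out (each element contributes $\mathbf{1}[e\in A]+\mathbf{1}[e\in B]-\mathbf{1}[e\in D_A]-\mathbf{1}[e\in D_B]$ to both sides precisely because $D_A\subseteq A$ and $D_B\subseteq B$), and both inclusions for $P(f_\beta)$ are handled properly. Note, however, that the paper itself gives no proof of this proposition --- it is stated without proof with a pointer to Fujishige's monograph --- so there is no in-paper argument to compare against; your proof is essentially the standard one found there (restriction of a polymatroid by the modular function $\beta$, with the union/intersection candidate sets for submodularity), and it stands as a complete, self-contained verification.
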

\vspace*{.1in}

We are now ready to relate the constraint set in problem $\Pi$ to the base of a polymatroid, as done 
by Morton et al \cite{1985xxMP_MorRanRin}. Define
\begin{eqnarray}
  c(0) & := & 0, \nonumber \\
  c(j) & := & \sum_{e=1}^{j} \alpha (e) ~~~~(1 \leq j \leq n),
\end{eqnarray}
and further define
\begin{eqnarray}
  \zeta(A) & := & \max{\{c(j):\{1,2,\ldots,j\}} \subseteq A, 0 \leq j \leq n\}~~~(A \subseteq E), \\
  \label{eqn:f}
  f(A) & := & \zeta (E) - \zeta (E-A) = c(n)- \zeta(E-A)~~~(A \subseteq E).
\end{eqnarray}

\vspace*{.1in}
\begin{prop} \label{propmorton}The following statements hold.
\begin{itemize}
\item The function $f$ in (\ref{eqn:f}) is a rank function and therefore $(E,f)$ is a polymatroid.
\item The set of  $x \in \mathbb{R}_{+}^{E}$ that satisfy the ascending constraints 
(\ref{eqn:ladderConstraint})-(\ref{eqn:equalityConstraint}) equals the base $B(f)$  of the 
polymatroid $(E,f)$.
\item The set of $x \in \mathbb{R_{+}^{E}}$ that satisfy the ascending constraints 
(\ref{eqn:ladderConstraint})-(\ref{eqn:equalityConstraint}) and the domination constraint 
(\ref{eqn:positivityBounded}) equals the base $B(f_{\beta})$ of the polymatroid $(E,f_{\beta})$.
\end{itemize}
\end{prop}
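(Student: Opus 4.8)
The plan is to treat the three bullets in order, with the first claim (that $f$ is a rank function) carrying essentially all of the difficulty; the other two will then follow by complementation and a short boundary computation. The organizing observation, which I would establish at the outset, is an explicit formula for $\zeta$. Writing $\ell(A)$ for the length of the longest initial segment $\{1,2,\ldots,\ell\}$ contained in $A$ (with $\ell(A)=0$ when $1\notin A$), and noting that $c$ is non-decreasing because each $\alpha(e)\ge 0$, the maximum defining $\zeta$ is attained at the largest admissible index, so $\zeta(A)=c(\ell(A))$. Given this, $f(\emptyset)=0$ and monotonicity are immediate: $f(\emptyset)=c(n)-\zeta(E)=c(n)-c(n)=0$, and since $A\mapsto\ell(A)$ and hence $\zeta$ is monotone, $A\subseteq B$ gives $\zeta(E-B)\le\zeta(E-A)$ and therefore $f(A)\le f(B)$; nonnegativity follows from monotonicity together with $f(\emptyset)=0$.

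The main obstacle is submodularity of $f$. I would first rewrite it, via the substitution $A'=E-A$, $B'=E-B$ and De Morgan's laws, as the equivalent \emph{supermodularity} of $\zeta$, namely $\zeta(A')+\zeta(B')\le\zeta(A'\cap B')+\zeta(A'\cup B')$. This is where the form $\zeta=c\circ\ell$ pays off. The crucial combinatorial facts are $\ell(A'\cap B')=\min(\ell(A'),\ell(B'))$ (a prefix lies in both sets iff it lies in their intersection) and $\ell(A'\cup B')\ge\max(\ell(A'),\ell(B'))$ (the longer of the two prefixes already lies in the union). Combining these with the monotonicity of $c$ yields $\zeta(A'\cap B')+\zeta(A'\cup B')\ge c\big(\min(\ell(A'),\ell(B'))\big)+c\big(\max(\ell(A'),\ell(B'))\big)=\zeta(A')+\zeta(B')$, which is the desired inequality. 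This settles the first bullet.

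For the second bullet I would exploit the equality constraint to dualize. Since $f(E)=c(n)-\zeta(\emptyset)=c(n)$, the base condition $x(E)=f(E)$ is exactly (\ref{eqn:equalityConstraint}). Under $x(E)=c(n)$ one has $x(A)=c(n)-x(E-A)$, so the independence inequalities $x(A)\le f(A)$ become $x(E-A)\ge\zeta(E-A)$; letting $B=E-A$ range over all subsets, membership in $B(f)$ is equivalent to $x(B)\ge\zeta(B)=c(\ell(B))$ for every $B$, together with $x(E)=c(n)$. Taking $B=\{1,\ldots,l\}$ recovers precisely the ascending constraints (\ref{eqn:ladderConstraint}); conversely, for a general $B$ the nonnegativity of $x$ gives $x(B)\ge\sum_{e=1}^{\ell(B)}x(e)\ge c(\ell(B))$ once the ascending and equality constraints hold. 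Hence the two sets coincide.

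Finally, the third bullet reduces to the second via Proposition~\ref{propfuj}, which identifies $P(f_{\beta})$ with $\{x\in P(f):x\le\beta\}$. The only additional computation is $f_{\beta}(E)=c(n)$: the choice $D=E$ shows $f_{\beta}(E)\le c(n)$, while for any $D$ one has $f(D)+\beta(E-D)=c(n)-\zeta(E-D)+\beta(E-D)\ge c(n)$, because $\beta(B)\ge\sum_{e=1}^{\ell(B)}\beta(e)\ge\sum_{e=1}^{\ell(B)}\alpha(e)=\zeta(B)$ for $B=E-D$, the middle inequality being exactly the feasibility hypothesis (\ref{eqn:commonSense1}). Consequently $x\in B(f_{\beta})$ iff $x\in B(f)$ and $x\le\beta$, i.e. iff $x$ satisfies the ascending and equality constraints (by the second bullet) together with the domination constraint (\ref{eqn:positivityBounded}), which completes the plan.
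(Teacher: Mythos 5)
Your proposal is correct, and it is in fact more than the paper offers: the paper states Proposition~\ref{propmorton} without proof, deferring to Morton et al.\ and remarking only that the route goes through recognizing $(E,\zeta)$ as a contrapolymatroid whose base coincides with $B(f)$. Your argument carries out exactly that duality by hand. The identity $\zeta(A)=c(\ell(A))$ with $\ell(A)$ the longest initial segment in $A$ is the right normal form: it makes the supermodularity of $\zeta$ (equivalently, via $A\mapsto E-A$, the submodularity of $f$) a one-line consequence of $\ell(A'\cap B')=\min(\ell(A'),\ell(B'))$, $\ell(A'\cup B')\geq\max(\ell(A'),\ell(B'))$, and the monotonicity of $c$; your second bullet is precisely the statement that the constraint set $\{x\geq 0: x(B)\geq\zeta(B)\ \forall B,\ x(E)=\zeta(E)\}$ is the base of the contrapolymatroid and equals $B(f)$ by complementation under the equality constraint. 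Two points are worth keeping: first, the converse direction of the second bullet genuinely needs $x\geq 0$ to pass from the prefix inequalities (\ref{eqn:ladderConstraint})--(\ref{eqn:equalityConstraint}) to $x(B)\geq\zeta(B)$ for arbitrary $B$, and you use it correctly; second, your computation $f_{\beta}(E)=c(n)$ makes explicit that the third bullet relies on the standing feasibility hypothesis (\ref{eqn:commonSense1}) (without it $B(f_{\beta})$ is still nonempty while the constraint set is empty), a dependence the paper's statement leaves implicit. The only gain of the paper's cited route is that it places the result in the general contrapolymatroid framework; yours buys a self-contained, elementary verification.
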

\vspace*{.1in}

See Morton et al. \cite{1985xxMP_MorRanRin} for a proof. Incidentally, this is shown by recognizing 
that $(E,\zeta)$ is a related object called the contrapolymatroid, that the set of all vectors 
meeting the constraints above is the base of the contrapolymatroid, and that the base of the 
contrapolymatroid $(E,\zeta)$ and the base of the polymatroid $(E,f)$ coincide. The above 
proposition thus says that problem $\Pi$ is simply a special case of $\Pi_{1}$ below with 
$g=f_{\beta}$.

Let $(E,g)$ be a polymatroid. Our interest is in the following.
\begin{eqnarray}
  \label{eqn:separableObjective}
  \text{Problem}~~\Pi_{1}:~~~~ \mbox{Minimize } & \sum_{e \in E} w_e(x(e)) \\
  \nonumber
  \mbox{subject to } & x \in B(g).
\end{eqnarray}

We next state a necessary and sufficient condition that an optimal base satisfies. For each $e \in 
E$, define $w_{e}^{+}$ and $w_{e}^{-}$ to be the right and left derivatives of $w_{e}$.

\vspace*{.1in}
\begin{thm} \label{thm:opt}
A base $x \in B(g)$ is an optimal solution to problem $\Pi_{1}$ if and only if for each exchangeable 
pair $(u,e)$ associated with base $x$ (i.e., $u \in \textsf{dep}(x,e,g)-\{e\}$), we have $ 
w_{e}^{+}(x(e)) \geq w_{u}^{-}(x(u))$.
\end{thm}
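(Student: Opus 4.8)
The plan is to derive both directions from the first-order behaviour of the separable objective under the elementary exchange moves admitted by the base polytope. Throughout, let $\chi_e \in \re^E$ denote the indicator vector of $\{e\}$, and recall that $(u,e)$ is exchangeable at $x$ precisely when $x_\delta := x + \delta(\chi_e - \chi_u) \in B(g)$ for all sufficiently small $\delta > 0$. Necessity is then immediate: fixing an exchangeable pair $(u,e)$ and the feasible perturbations $x_\delta$, separability gives $W(x_\delta) - W(x) = [w_e(x(e)+\delta) - w_e(x(e))] + [w_u(x(u)-\delta) - w_u(x(u))]$, so dividing by $\delta$ and letting $\delta \downarrow 0$ the two difference quotients converge to $w_e^+(x(e))$ and $-w_u^-(x(u))$. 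Optimality of $x$ forces $W(x_\delta) \ge W(x)$, whence the limit is nonnegative, i.e.\ $w_e^+(x(e)) \ge w_u^-(x(u))$.

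For sufficiency, assume the inequality holds for every exchangeable pair and fix an arbitrary base $y$. First I would reduce global optimality to a directional-derivative inequality: by convexity of each $w_e$ and separability, with $z := y - x$ one has $W(y) \ge W(x) + W'(x;z)$, where $W'(x;z) = \sum_{e:\,z(e)>0} z(e)\, w_e^+(x(e)) + \sum_{e:\,z(e)<0} z(e)\, w_e^-(x(e))$. Every one-sided derivative appearing here is finite, since $z(e)>0$ forces $x(e) < y(e) \le \beta(e) \le b_e$ and $z(e)<0$ forces $x(e) > y(e) \ge 0$. It therefore suffices to prove $W'(x;z) \ge 0$.

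The crux is a decomposition of the exchange vector $z$ into exchangeable pairs at $x$. Writing $P = \{e : z(e)>0\}$ and $N = \{u : z(u)<0\}$ (note $\sum_{e\in P} z(e) = \sum_{u\in N}|z(u)|$ since $x(E)=y(E)=g(E)$), I claim there exist weights $\lambda_{u,e} \ge 0$, indexed by exchangeable pairs with $e \in P,\ u \in N$, satisfying $z(e) = \sum_u \lambda_{u,e}$ for $e \in P$ and $|z(u)| = \sum_e \lambda_{u,e}$ for $u \in N$. This is exactly the feasibility of a transportation problem between the supplies $\{|z(u)|\}_{u\in N}$ and demands $\{z(e)\}_{e\in P}$ on the bipartite exchangeability graph, and by the Gale/Hall feasibility criterion it holds iff $\sum_{e \in S} z(e) \le \sum_{u \in \Gamma(S)} |z(u)|$ for every $S \subseteq P$, where $\Gamma(S) = \{u \in N : u \in \textsf{dep}(x,e,g) \text{ for some } e \in S\}$. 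To verify this I set $D_S = \bigcup_{e \in S} \textsf{dep}(x,e,g)$ and use that the family of $x$-tight sets is closed under union (from submodularity of $g$ together with $x(A) \le g(A)$, so that $g(A)+g(B)=x(A)+x(B)=x(A\cup B)+x(A\cap B)\le g(A\cup B)+g(A\cap B)\le g(A)+g(B)$ forces equality); hence $x(D_S) = g(D_S)$, and since $y$ is independent, $z(D_S) = y(D_S) - g(D_S) \le 0$. As $S \subseteq D_S \cap P$ and $\Gamma(S) = D_S \cap N$, the inequality $z(D_S) \le 0$ rearranges to exactly the required bound. This transportation/lattice argument is the one genuine obstacle; everything else is bookkeeping.

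Finally I would assemble the pieces. Substituting the decomposition into the expression for $W'(x;z)$ — which is an exact identity, not merely a subadditive bound, because every coordinate in $P$ receives only positive contributions and every coordinate in $N$ only negative ones — collapses it to $W'(x;z) = \sum_{(u,e)} \lambda_{u,e}\,[\,w_e^+(x(e)) - w_u^-(x(u))\,]$, which is nonnegative term by term by the standing hypothesis. Hence $W(y) \ge W(x)$, and since $y$ was an arbitrary base, $x$ is optimal, completing the proof.
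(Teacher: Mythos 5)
Your argument is essentially correct, but be aware that the paper does not prove Theorem~\ref{thm:opt} at all: it quotes the result from Groenevelt and refers the reader to Fujishige [Th.~8.1] for a proof of the more general statement for submodular systems. So the comparison is between your self-contained argument and the cited one. Your necessity direction is the standard perturbation along $\chi_e-\chi_u$; the only point you gloss over is that the perturbed point must also satisfy the nonnegativity constraints, i.e.\ you need $x(u)>0$. This is automatic --- if $x(u)=0$ and $u\neq e$, then removing $u$ from any tight set containing it leaves a tight set (by monotonicity of $g$), so $u\notin\textsf{dep}(x,e,g)$ --- and indeed the paper uses exactly this observation later in the proof of Lemma~\ref{l9}, but it deserves a sentence. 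Your sufficiency direction is where you genuinely depart from the cited route: Fujishige's proof of the submodular-system version proceeds by an exchange/contradiction argument on a putative better base, whereas you reduce global optimality to nonnegativity of the one-sided directional derivative $W'(x;y-x)$ via convexity, and then certify that nonnegativity by decomposing $y-x$ as a transportation plan on the bipartite exchangeability graph, with feasibility of the plan verified through the Hall/Gale condition and the lattice of $x$-tight sets ($x(D_S)=g(D_S)$, $y(D_S)\le g(D_S)$, hence $z(D_S)\le 0$). This packaging is clean and buys a proof that works verbatim for non-smooth, non-strictly-convex $w_e$ (which is precisely the generality the paper needs), at the cost of invoking transportation feasibility as an external ingredient; the cited proof avoids that machinery but is longer and stated in greater generality than required here. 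The one loose end in your write-up is the blanket claim that all one-sided derivatives appearing in $W'(x;z)$ are finite; since $w_e$ is only assumed convex with values in $\overline{\re}$, the honest statement is that if $w_e^+(x(e))=+\infty$ for some $e$ with $z(e)>0$ then $W(y)=+\infty$ and there is nothing to prove, while $w_u^-(x(u))>-\infty$ follows from finiteness of $w_u$ at $0$ and at $x(u)$. Neither issue is a real gap.
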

\vspace*{.1in}

The result is due to Groenevelt \cite{1991xxEJOR_Gro}. See Fujishige \cite[Th. 8.1]{2003xxELS_Fuj} 
for a proof of the more general result on submodular systems. In the next section, we provide an 
algorithm to arrive at a base that satisfies Groenevelt's necessary and sufficient condition.

An alternate characterization of the optimal base when $w_e$ is strictly convex and continuously 
differentiable is the following. Let $h_e$ be the derivative of $w_e$. Note that $h_e$ is continuous 
and strictly increasing. 
\vspace*{.1in}
\begin{thm} \label{thm:opt2}
For each $e \in E$, let $w_e$ be strictly convex and continuously differentiable with $h_e$ as its 
derivative. A base $x \in B(g)$ is an optimal solution to problem $\Pi_{1}$ if and only if $x$ is an 
$h$-lexicographically optimal base.
\end{thm}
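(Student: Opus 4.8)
The plan is to leverage Theorem \ref{thm:opt} together with the fact that strict convexity pins down the optimizer uniquely, so that the whole statement reduces to a single exchange argument. First I would observe that since each $w_e$ is continuously differentiable, its one-sided derivatives coincide with $h_e$, i.e. $w_e^{+}(x(e)) = w_e^{-}(x(e)) = h_e(x(e))$. Groenevelt's condition in Theorem \ref{thm:opt} therefore specializes to the statement that $x \in B(g)$ is optimal for $\Pi_{1}$ if and only if $h_e(x(e)) \ge h_u(x(u))$ for every exchangeable pair $(u,e)$, that is, whenever $u \in \textsf{dep}(x,e,g)-\{e\}$. I would also record at the outset that the minimizer is unique: $W = \sum_e w_e$ is a sum of strictly convex functions, hence strictly convex, and $B(g)$ is a compact convex polytope, so $\Pi_{1}$ has exactly one optimal solution, which I denote $x^{\ast}$.

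The heart of the argument is to show that an $h$-lexicographically optimal base satisfies the exchange inequality above, which I would prove in contrapositive form. Suppose $x \in B(g)$ and some exchangeable pair $(u,e)$ violates it, i.e. $h_e(x(e)) < h_u(x(u))$. Since $u \in \textsf{dep}(x,e,g)-\{e\}$, a small mass $\delta > 0$ may be transferred from $x(u)$ to $x(e)$ while keeping the vector independent, and since the total mass is unchanged the resulting vector $x'$ is again a base. Writing $a = h_e(x(e))$ and $b = h_u(x(u))$ with $a < b$, the transfer replaces these two values by $a' = h_e(x(e)+\delta) > a$ and $b' = h_u(x(u)-\delta) < b$; as $h_e,h_u$ are continuous, choosing $\delta$ small enough keeps $a < a' \le b' < b$. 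This is a ``Robin Hood'' transfer, raising a smaller value and lowering a larger one, and I would verify through a threshold count that it strictly increases the increasing-sorted tuple in the lexicographic order. Concretely, letting $N(t)$ denote the number of coordinates of $h(\cdot)$ that are $\le t$, only the two altered coordinates contribute a change, giving $N_{x'}(t) \le N_{x}(t)$ for all $t$ with strict inequality for $t \in [a,a')$; since the $i$-th smallest entry equals $\inf\{t : N(t) \ge i\}$, this forces $T(h(x'))$ to dominate $T(h(x))$ lexicographically, so $x$ is not $h$-lexicographically optimal.

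With this in hand I would close the equivalence by a uniqueness argument. A lexicographically maximal base exists by compactness: one maximizes the smallest entry $\min_e h_e(x(e))$ over $B(g)$, obtaining a nonempty compact set of maximizers, over which one maximizes the next-smallest entry, and so on, each step maximizing a continuous function over a compact set. By the previous paragraph any such base satisfies Groenevelt's condition, so by Theorem \ref{thm:opt} it is a minimizer of $\Pi_{1}$ and therefore equals the unique optimizer $x^{\ast}$. Consequently the $h$-lexicographically optimal base is itself unique and coincides with $x^{\ast}$, and the chain ``$x$ optimal $\Leftrightarrow x = x^{\ast} \Leftrightarrow x$ is $h$-lexicographically optimal'' completes the proof.

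I expect the delicate step to be the verification that the Robin Hood transfer produces a genuine lexicographic improvement, i.e. tracking the sorted order of $h(x)$ under the two-coordinate perturbation, together with the bookkeeping that transferring $\delta$ keeps the vector inside $B(g)$, which rests on the defining property of $\textsf{dep}(x,e,g)$ recalled in the preliminaries. The remaining ingredients, namely the reduction of Groenevelt's condition via $\mathcal{C}^{1}$ smoothness, uniqueness of the minimizer from strict convexity, and existence of the lexicographic maximizer from compactness, are routine.
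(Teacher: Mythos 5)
The paper itself offers no proof of Theorem \ref{thm:opt2}; it simply cites Fujishige [Th.\ 9.1] for the general result on submodular systems. So your self-contained derivation from Theorem \ref{thm:opt} is necessarily a different route, and it is a legitimate one: reduce Groenevelt's condition to $h_e(x(e)) \geq h_u(x(u))$ for exchangeable pairs using $w_e^+ = w_e^- = h_e$, show by an exchange (``Robin Hood'') argument that a lexicographically optimal base must satisfy this condition, then close the loop through uniqueness of the minimizer (strict convexity) and existence of a lex-optimal base (compactness of $B(g)$ plus sequential maximization of the sorted entries). This buys a short, elementary proof that leans only on results already stated in the paper, at the cost of needing strict convexity essentially (to guarantee $h_e$ strictly increasing, so that the transfer strictly raises the smaller value) --- which is exactly the hypothesis of the theorem, so nothing is lost.

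There is one flaw in the write-up of the step you yourself flagged as delicate. After transferring $\delta$ from $u$ to $e$, the counting function satisfies $N_{x'}(t) = N_x(t)$ for $t < a$, $N_{x'}(t) = N_x(t) - 1$ for $t \in [a,a')$, $N_{x'}(t) = N_x(t)$ for $t \in [a',b')$, and $N_{x'}(t) = N_x(t) + 1$ for $t \in [b',b)$; your claim that $N_{x'}(t) \leq N_x(t)$ for \emph{all} $t$ is therefore false on $[b',b)$, and the inference ``componentwise domination of every sorted entry, hence lexicographic domination'' does not go through as stated. The conclusion survives, but for a different reason: lexicographic comparison of increasingly sorted tuples is decided at the \emph{first} threshold where the counts differ, which here lies in $[a,a')$ where $N_{x'} < N_x$; equivalently, the two tuples agree on all common entries below $a$, and at the first position where they differ the entry of $T(h(x'))$ is $\min(a', c) > a$ (with $c$ the smallest common value exceeding $a$) while that of $T(h(x))$ is $a$. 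Replace the global inequality by this first-difference argument and the proof is complete.
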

\vspace*{.1in}

The result is due to Fujishige \cite{2003xxELS_Fuj}. See Fujishige \cite[Th. 9.1]{2003xxELS_Fuj} for 
the proof.
\section{Some New Results}
\label{sec:results}

Fujishige \cite{2003xxELS_Fuj} provides an algorithm called the \emph{decomposition} algorithm to 
find an increasing chain  of subsets of $E$ as a key step to finding the optimal solution. However, 
in this section, we extend an algorithm of Padakandla and Sundaresan \cite{200908SIAM_PadSun}, which 
is itself an extension of the algorithm of Morton et al \cite{1985xxMP_MorRanRin}, to arrive 
directly at a chain and thence the optimal solution. This algorithm reduces to that of Padakandla 
and Sundaresan \cite{200908SIAM_PadSun} for strictly convex $\mathcal{C}^{1}$ functions  $w_{e}$, $e 
\in E$, and to that of  Morton et al. \cite{1985xxMP_MorRanRin} for the case when $w_{e}(\zeta) = 
\lambda(e) \zeta^{p}$ for each $e \in E$, and $p > 1$.

The following algorithm seeks to identify the desired chain that will yield an $x$ that meets the 
necessary and sufficient condition of Theorem \ref{thm:opt}. Define the generalized inverse 
functions of $w_{e}$ as
\begin{eqnarray*}
  i_{e}^{-}(\eta) & := & \inf{\{\zeta: w_{e}^{+}(\zeta) \geq \eta\}}, \\
  i_{e}^{+}(\eta) & := & \sup{\{\zeta: w_{e}^{-}(\zeta) \leq \eta\}}.
\end{eqnarray*}
See figures \ref{fig:figure1} and \ref{fig:figure2}. The function $w_e^+$ is a right-continuous 
nondecreasing function while its inverse $i_e^-$ is a left-continuous nondecreasing function. On the 
other hand, $w_e^-$ and $i_e^+$ are left-continuous and right-continuous, respectively.

\begin{figure}[ht]
\centering
\includegraphics[scale=0.6]{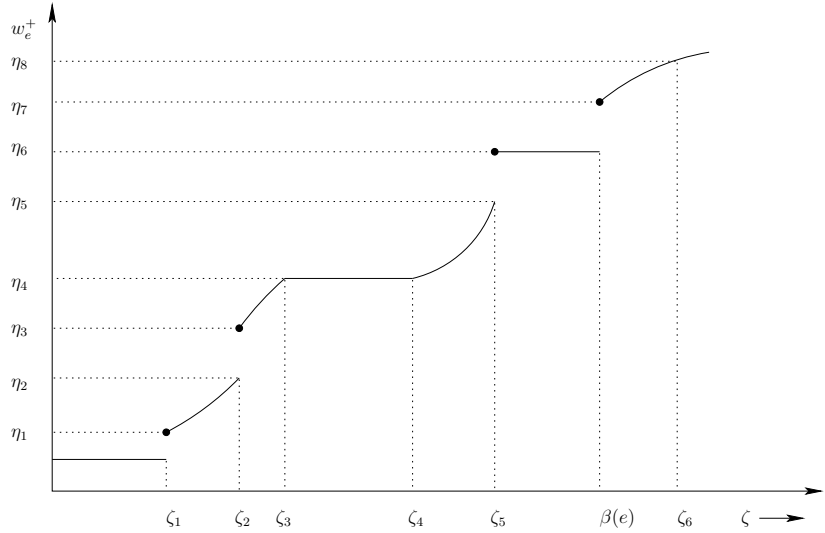}
\caption{The function $w_{e}^{+}(\zeta)$ as a function of $\zeta$.}
\label{fig:figure1}
\end{figure}

\begin{figure}[ht]
\centering
\includegraphics[scale=0.6]{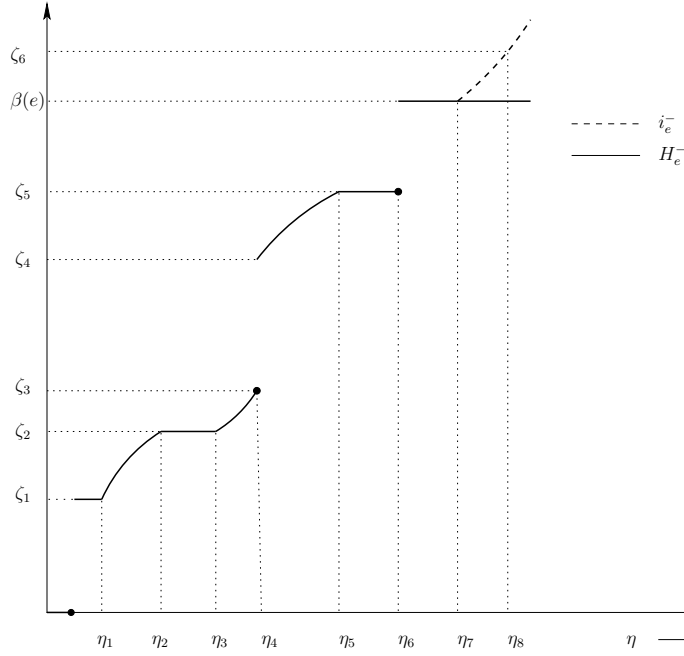}
\caption{The functions $i_{e}^{-}(\eta)$ and $H_{e}^{-}(\eta)$ as a function of $\eta$. They 
coincide for $\eta \leq \eta_7$.}
\label{fig:figure2}
\end{figure}
It is straightforward to see that $i_{e}^{-}(\eta) \leq i_{e}^{+}(\eta)$ with strict inequality only 
for those $\eta$ for which the interval $\{\zeta:w_{e}^{+}(\zeta) = w_{e}^{-}(\zeta) = \eta \}$ is 
not a singleton. For readers unfamiliar with these notions, it may be beneficial to keep the simple 
case when $w_e \in \mathcal{C}^1$ in mind, because in this case $w_e^+ = w_e^- = w'_e$ is the 
derivative of $w_e$, and $i_e^+ = i_e^- = (w'_e)^{-1}$ is the inverse of $w'_e$.

Define their saturated and truncated counterparts as
\begin{displaymath}
  H_{e}^{-}(\eta) = \left\{
     \begin{array}{ll}
       0 & \eta \leq  w_{e}^{+}(0)\\
       i_{e}^{-}(\eta) & w_{e}^{+}(0) < \eta \leq  w_{e}^{-}(\beta(e))\\
        \beta(e) & \eta > w_{e}^{-}(\beta(e)),
     \end{array}
   \right.
\end{displaymath}
and
\begin{displaymath}
  H_{e}^{+}(\eta) = \left\{
     \begin{array}{ll}
       0 & \eta <  w_{e}^{+}(0)\\
       i_{e}^{+}(\eta) & w_{e}^{+}(0) \leq \eta <  w_{e}^{-}(\beta(e))\\
        \beta(e) & \eta \geq w_{e}^{-}(\beta(e)),
     \end{array}
   \right.
\end{displaymath}
respectively. We now state the main algorithm with two associated subroutines.

\vspace*{.15in}
\begin{algorithm}
\label{algorithm}
\begin{itemize}
\item Let $s(0)=n+1$ and set $s(j) \in \{ 1,2,\ldots,n\}$ recursively for $j=1,2,\ldots$ as 
follows.	
\item For each $l$, $1 \leq l < s(j-1)$, pick $\eta_{l}^{j}$ to be the smallest $\eta$ satisfying
\begin{equation} \label{eqn:zero}
 \sum_{e=l}^{s(j-1)-1}H_{e}^-(\eta) \leq \sum_{e=l}^{s(j-1)-1}\alpha(e) \leq 
\sum_{e=l}^{s(j-1)-1}H_{e}^+(\eta)~~~~(1 \leq l < s(j-1)),
\end{equation}
if such an $\eta$ exists. If not, $\eta_l^j$ is undefined. Thus, if $\eta_l^j$ is defined, we have
\begin{equation} \label{eqn:one}
 \sum_{e=l}^{s(j-1)-1}H_{e}^-(\eta_{l}^{j}) \leq \sum_{e=l}^{s(j-1)-1}\alpha(e) \leq 
\sum_{e=l}^{s(j-1)-1}H_{e}^+(\eta_{l}^{j})~~~~(1 \leq l < s(j-1)).
\end{equation}
\item Let
\begin{equation}
\label{eqn:gamma}
\Gamma_{j} = \min\{\eta_{l}^{j} ~|~ 1 \leq l < s(j-1) \mbox{ and } \eta_{l}^{j} \mbox{ exists}\},
\end{equation}
and let $l_{1}, l_{2},\ldots,l_{r}$ be the indices that attain the minimum in~(\ref{eqn:gamma}), 
ordered so that  $s(j-1)>l_{1}>l_{2}>\ldots>l_{r}\geq 1$. The quantity $r$ denotes the number of 
tied indices.
\item Set $s(j)$ as given by the output of Subroutine \ref{subroutine1}.
\item Set values  $x(e)$ for $e = s(j),s(j)+1,\ldots,s(j-1)-1$ as given by the output of Subroutine 
\ref{subroutine2}.
\item If $s(j)=1$, exit. $\hfill \Box$
\end{itemize}
\end{algorithm}
\vspace*{.15in}

We next describe the two subroutines referred to in Algorithm \ref{algorithm}.

The first one sets $s(j)$. One property that the obtained $s(j)$ from Subroutine \ref{subroutine1} 
will have, and we will soon show this, is that constraints (\ref{eqn:ladderConstraint}) are met with 
equality whenever $l$ equals an $s(j)$ for some iterate $j$. We will also see that Subroutine 
\ref{subroutine2} will set a variable $x(e)$ to $H_e^+(\Gamma_j)$ whenever possible. For feasibility 
then, the partial sums of $H_e^+(\Gamma_j)$ from $s(j)$ to each of the tied indices $l_i$ should 
exceed the corresponding partial sums of $\alpha(e)$. So $s(j)$ should be chosen carefully, which is 
the purpose of the next described Subroutine \ref{subroutine1}.

\vspace*{.15in}
\begin{subroutine}
\label{subroutine1}
This subroutine takes as input $s(j-1),r,l_{1},l_{2},\cdots,l_{r}$ and $\Gamma_{j}$. It returns 
$s(j)$.
\begin{itemize}
 \item If there is a unique index $l_{1}$ that achieves the minimum, set $s(j)=l_{1}$. Otherwise we 
have $s(j-1)>l_{1} > l_{2} > \ldots > l_{r}$. Choose $s(j)$ as follows.
Consider the inequality
\begin{equation}
  \label{eqn:subroutine1}
  \sum_{e=m}^{k-1}H_{e}^{+}(\Gamma_{j}) \geq \sum_{e=m}^{k-1}\alpha(e),
\end{equation}
where $k$ and $m$ are iterates.
\begin{enumerate}
\item Initialize $i=1$, $t=r$, $k=l_{i}$, $m=l_{t}$.
\item Repeat \newline
$~~~~~~~$If (\ref{eqn:subroutine1}) holds, $i \leftarrow i+1$, $k \leftarrow l_{i}$.\newline
$~~~~~~~$else $t \leftarrow t-1$, $m \leftarrow l_{t}$ \newline
until $i=t$.
\item Set $s(j)=l_{i}$ and return $s(j)$. $\hfill \Box$
\end{enumerate}
\end{itemize}
\end{subroutine}
\vspace*{.15in}

The second subroutine sets the variables for a subset of $E$.

\vspace*{.15in}
\begin{subroutine}
\label{subroutine2}
This subroutine takes as input the following quantities:
$$s(j), \Gamma_{j}, I=\{l_{1}, l_{2}, \ldots, l_{p}\} \mbox{ with } s(j-1) > l_{1} > l_{2} >\ldots > 
l_{p} > s(j).$$
These $l_{i}$'s are the set of tied indices that satisfy~(\ref{eqn:one}) at the $j^{th}$ iteration, 
and are strictly larger than $s(j)$ set by Subroutine~\ref{subroutine1}. If $I$ is empty, $p$ is 
taken to be zero. It sets $x(e)$ for $e = s(j), s(j)+1, \ldots, s(j-1)-1$.
\begin{enumerate}

  \item Initialize  $~m=0, ~p_{m} = p, ~l_0^{m}= s(j-1), ~l_{p_{m}+1}^{m}=s(j), ~I_{m}=I, 
~l_{i}^{m}=l_{i} ~~(1 \leq i \leq p_{m})$.

  \item \label{step} Calculate
  \begin{equation}\label{eqn:mingamma}
     \gamma = \min{\left\{\sum_{e=l_{i}^{m}}^{l_{0}^{m}-1}\alpha(e)-\sum_{e=l_{i}^{m}}^{l_{0}^{m}-1} 
H_{e}^{-}(\Gamma_{j}),~\forall~ i=1,\ldots , p_{m}+1\right\} }.
  \end{equation}
  \newline Let $t$ be the largest index in (\ref{eqn:mingamma}) such that
  $$\gamma= \sum_{e=l_{t}^{m}}^{l_{0}^{m}-1}\alpha(e)-\sum_{e=l_{t}^{m}}^{l_{0}^{m}-1} 
H_{e}^{-}(\Gamma_{j}).$$

  \item Assign $x(e), e \in \{ l_{0}^{m}-1,l_{0}^{m}-2,\ldots,l_{1}^{m} \}$ such that $x(e) \in 
[H_{e}^{-}(\Gamma_{j}),H_{e}^{+}(\Gamma_{j})]$ and
  \begin{equation}
     \label{assign1}
     \sum_{e=l_{1}^{m}}^{l_{0}^{m}-1}x(e)=\gamma + 
\sum_{e=l_{1}^{m}}^{l_{0}^{m}-1}H_{e}^{-}(\Gamma_{j}).
  \end{equation}
  For $e \in \{ l_{1}^{m}-1,l_{1}^{m}-2,\ldots, l_{t}^{m}\}$, assign
  \begin{equation}
    \label{assign2}
    x(e)=H_{e}^{-}(\Gamma_{j}).
  \end{equation}

  \item If $l_{t}^{m}=s(j)$, exit.

  \item Let $I_{m+1}$ be the indices $l$ in $I \cap \{ l: l<l_{t}^{m} \}$ that satisfy
  \begin{equation}\label{eqn:srt27}
     \sum_{e=l}^{l_{t}^{m}-1}H_{e}^{+}(\Gamma_{j}) \geq \sum_{e=l}^{l_{t}^{m}-1}\alpha(e).
  \end{equation}
  \newline
  Set $p_{m+1}=\lvert I_{m+1} \rvert$. Further, set $l_{0}^{m+1}=l_{t}^{m}$ and 
$l_{p_{m+1}+1}^{m+1}=s(j)$. Denote the indices in $I_{m+1}$ as 
$l_{1}^{m+1},l_{2}^{m+1},\ldots,l_{p_{m+1}}^{m+1}$ such that 
$l_{1}^{m+1}>l_{2}^{m+1}>\cdots > l_{p_{m+1}}^{m+1}$.
  Set $m \leftarrow m+1$. Go to step \ref{step}. $\hfill \Box$
\end{enumerate}
\end{subroutine}
\vspace*{.15in}

We next formally state correctness and optimality properties of the algorithm and prove them in the 
following section. That the operations in the algorithm are all well defined can be gleaned from 
the 
proof given in the appendix.

\vspace*{.15in}
\begin{thm}\label{th3}
If the feasible set is nonempty, Algorithm \ref{algorithm} runs to completion and puts out a 
feasible vector. This output solves problem $\Pi$.
\end{thm}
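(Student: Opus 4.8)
The plan is to establish the three assertions of the theorem in turn: that Algorithm~\ref{algorithm} terminates, that its output $x$ is feasible for $\Pi$, and that $x$ satisfies the necessary and sufficient optimality condition of Theorem~\ref{thm:opt} for the polymatroid $(E,f_\beta)$. Recall from Proposition~\ref{propmorton} that $\Pi$ is precisely $\Pi_1$ with $g=f_\beta$, so verifying Groenevelt's condition against $(E,f_\beta)$ is exactly what is needed. For termination and well-definedness I would first note that $H_e^-$ and $H_e^+$ are nondecreasing in $\eta$, satisfy $H_e^-\le H_e^+$, and sweep continuously from $0$ (as $\eta\to-\infty$) up to $\beta(e)$ (as $\eta\to+\infty$). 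Hence the two sums bracketing $\sum_{e=l}^{s(j-1)-1}\alpha(e)$ in~(\ref{eqn:zero}) range from $0$ to $\sum_{e=l}^{s(j-1)-1}\beta(e)$, which dominates the middle term by the feasibility hypothesis~(\ref{eqn:commonSense1}); this yields existence of at least one $\eta_l^j$ and hence a well-defined $\Gamma_j$ in~(\ref{eqn:gamma}). Since Subroutine~\ref{subroutine1} returns an index with $s(j)<s(j-1)$ strictly, the sequence $s(0)>s(1)>\cdots$ strictly descends in $\{1,\dots,n+1\}$ and the algorithm halts after at most $n$ iterations with $s(\cdot)=1$.

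For feasibility, the bounds~(\ref{eqn:positivityBounded}) are immediate because Subroutine~\ref{subroutine2} assigns only values $x(e)\in[H_e^-(\Gamma_j),H_e^+(\Gamma_j)]\subseteq[0,\beta(e)]$. The ascending constraints~(\ref{eqn:ladderConstraint})--(\ref{eqn:equalityConstraint}) I would handle block by block, the central bookkeeping fact being that each completed iteration $j$ makes the ascending constraint tight at its block boundary, i.e. $\sum_{e=1}^{s(j)-1}x(e)=\sum_{e=1}^{s(j)-1}\alpha(e)$, while leaving the inequality slack at interior indices. This is what the two subroutines are engineered to guarantee: the test~(\ref{eqn:subroutine1}) picks $s(j)$ among the tied minimizers so that partial sums of $H_e^+(\Gamma_j)$ dominate those of $\alpha$ from $s(j)$ up to each tied index, and Subroutine~\ref{subroutine2} then allocates the residual mass $\gamma$ of~(\ref{eqn:mingamma}) so that the total over $\{s(j),\dots,s(j-1)-1\}$ matches $\sum\alpha(e)$ over the same range while each coordinate stays inside its interval. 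Telescoping these per-block equalities over the union $\{s(j),\dots,n\}$ of completed blocks delivers both the intermediate inequalities and the global equality~(\ref{eqn:equalityConstraint}).

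The real work is the optimality step. I would first record two one-sided bounds that follow from the definitions of $H_e^\pm,\,i_e^\pm$ and the continuity remarks after Figure~\ref{fig:figure2}: if $x(e)\ge H_e^-(\Gamma_j)$ and $x(e)<\beta(e)$ then $w_e^+(x(e))\ge\Gamma_j$, and if $x(e)\le H_e^+(\Gamma_j)$ and $x(e)>0$ then $w_e^-(x(e))\le\Gamma_j$. Next I would prove the level monotonicity $\Gamma_1<\Gamma_2<\cdots$, which holds because each $\Gamma_{j+1}$ minimizes the same required-level quantity~(\ref{eqn:zero}) over the strictly smaller prefix range remaining after the minimizing block of iteration $j$ is deleted. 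Writing $j(e)$ for the iteration assigning $x(e)$, I claim every exchangeable pair $(u,e)$ with $u\in\textsf{dep}(x,e,f_\beta)-\{e\}$ has $\Gamma_{j(e)}\ge\Gamma_{j(u)}$. Indeed, exchangeability means a small mass transfer from $u$ to $e$ keeps $x$ feasible, which forces $x(e)<\beta(e)$ and $x(u)>0$ (so both bounds above apply) and, crucially, cannot decrease any tight prefix sum; were $\Gamma_{j(e)}<\Gamma_{j(u)}$, then $e$ would lie strictly to the right of $u$ across a block boundary, and the prefix tightness established in the feasibility step would be violated by the transfer. With the claim in hand, $w_e^+(x(e))\ge\Gamma_{j(e)}\ge\Gamma_{j(u)}\ge w_u^-(x(u))$, which is exactly Groenevelt's condition, so Theorem~\ref{thm:opt} certifies optimality of $x$ for $\Pi_1$ and hence for $\Pi$.

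The hard part will be this optimality step, and within it the precise link between the combinatorial quantities produced by the subroutines and the polymatroid dependence structure: one must show that the tight sets of $(E,f_\beta)$ at the computed base are exactly the prefixes ending at the block boundaries, so that $\textsf{dep}(x,e,f_\beta)$ can never cross such a boundary in the forbidden direction. Establishing this, together with the two invariants maintained across iterations---strict increase of $\Gamma_j$ and tightness exactly at the boundaries---requires a careful induction over the iterations (tracking how Subroutine~\ref{subroutine1}'s choice of $s(j)$ and Subroutine~\ref{subroutine2}'s nested distribution of $\gamma$ preserve both invariants) and is the technical heart of the argument. Once those invariants are in place, the one-sided-derivative bounds and the single case split above assemble the conclusion essentially mechanically.
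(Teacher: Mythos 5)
Your outline follows the paper's own proof essentially step for step: termination via existence of $\eta_l^j$ and strict descent of $s(j)$, feasibility via tightness of the ascending constraint exactly at the block boundaries $s(j)$ with slackness inside each block, and optimality via the strict monotonicity $\Gamma_1<\Gamma_2<\cdots$, the one-sided bounds $w_e^+(x(e))\ge\Gamma_{j(e)}$ and $w_u^-(x(u))\le\Gamma_{j(u)}$ derived from $H_e^\pm$, and the observation that an exchangeable pair cannot cross a tight block boundary in the forbidden direction (the paper phrases this as $\textsf{dep}(x,e,f)\subseteq A_j=\{s(j),\dots,n\}$, which is your prefix-tightness argument in suffix form). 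The only caveat is that your one-line justification of $\Gamma_j<\Gamma_{j+1}$ ("minimizing over a smaller range") is not by itself sufficient---the paper's Lemma~\ref{l4} needs Proposition~\ref{prop:3} to show that $\sum_{e=l}^{s(j)-1}H_e^+(\Gamma_j)<\sum_{e=l}^{s(j)-1}\alpha(e)$ for all remaining $l$---but you correctly flag this invariant as the technical heart requiring the careful induction.
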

\vspace*{.15in}

Observe that the hypothesis is the natural minimum requirement that the feasible set is nonempty. 
See Lemma \ref{lemma:fb} for a natural necessary and sufficient condition for a nonempty feasible 
set.

The algorithm for the special case when $w_{e}$ are strictly convex and 
$\mathcal{C}^{1}$ is given in \cite{2014xxNCC_AkhSinSun}. The algorithm is much simpler in this 
case. 
The  Subroutine \ref{subroutine1} chooses $l_{r}$, the smallest index satisfying~(\ref{eqn:one}), 
as $s(j)$ since~(\ref{eqn:subroutine1})  holds true for $m=l_{r}$ and $k=l_{i}$ for all  
$i=1,2,\ldots,r-1$. Since $H_{e}=H_{e}^{-}=H_{e}^{+}$, Subroutine \ref{subroutine2} assigns 
$x(e)=H_{e}(\Gamma_{j})$ for all $e \in \{s(j),s(j)+1,\ldots,s(j-1)-1\}$.

See Appendix for the proof of Theorem \ref{th3}.

\section{A Greedy Algorithm}\label{sec:greedy}
In section \ref{sec:results}, we extended the algorithm of Akhil, Singh, and Sundaresan 
\cite{2014xxNCC_AkhSinSun}, that solves problem $\Pi$, to separable convex functions that are not 
strictly convex or differentiable. In this section, we describe a more efficient algorithm proposed 
by Hochbaum \cite{1994xxMOR_Hoc} (with a correction note by Moriguchi et al. 
\cite{200405MOR_MorShi}) that provides an $\epsilon\operatorname{-}$optimal solution to $\Pi_1$. It 
is based on the greedy approach. 

Consider the following discrete resource allocation problem that is problem $\Pi_1$ with variables 
restricted to integers. 
\begin{align}
  \label{eqn:separableObjectivediscrete}
  \text{Problem}~~\Pi_{2}:~~~~ \mbox{Minimize } &\sum_{e \in E} w_e(x(e)) ~~~~~~~~~~~~~~~~~~~\\
  \nonumber
  \mbox{subject to }  &x \in B(g),\\ \nonumber
                    &x \in \mathbb{Z}^{E}_{+}.
\end{align}
The set of vectors satisfying the constraint set of problem $\Pi_2$ form the bases of the 
polymatroid $(E,g)$ defined over integers. Problem $\Pi_2$ can be solved by the greedy algorithm 
(Federgruen and Groenevelt \cite{1986xxOR_FedGro}). Starting with an initial allocation $x=0$, this 
algorithm increases the value of a variable by one unit if the corresponding decrease in the 
objective function is largest among all possible feasible increments. The complexity of the greedy 
algorithm is $\mathcal{O}\left(B(\log\,n+F)\right)$, where $F$ is the number of operations required 
to check the feasibility of a given increment in a single variable. The $\log\,n$ complexity is to 
keep a sorted array of the marginal decrease in the objective function. Therefore, the complexity of 
the greedy algorithm is exponential in the number of input bits to the algorithm. 

Hochbaum's algorithm, referred to as General Allocation Procedure ($\textsf{GAP}$) in 
\cite{1994xxMOR_Hoc}, combines the greedy algorithm with a scaling technique. Hochbaum considers an 
additional constraint $x \geq l$, where $l=(l_1,l_2,\cdots,l_n)$. The modified greedy algorithm 
consists of a subroutine $\textsf{greedy}(s,\tilde{l})$ that finds the variable that has the largest 
decrease in the objective function value among all variables that can be increased by one unit 
without violating feasibility, and increases it by $s$ units. The subroutine starts with an initial 
allocation $x= \tilde{l}$. Define $\mathbf{e} \in \mathbb{R}^{E}$ as $\mathbf{e} = (1,1,\cdots,1)$ 
and $\mathbf{e}^{k} \in \mathbb{R}^{E}$ as  $\mathbf{e}^{k}(k)=1 ~\mbox{and}~ \mathbf{e}^{k}(j)=0, j 
\neq i$. Let the total number of units to be allocated among $n$ variables be $B$, i.e., $g(E)=B$. 
The subroutine $\textsf{greedy}(s,\tilde{l})$ is described below (including the Moriguchi et al. 
\cite{200405MOR_MorShi} correction).
\vspace*{3mm}
\begin{algorithm}
${\sf greedy}(s,\tilde{l})$
\begin{enumerate}
  \item $x = \tilde{l}, \tilde{B}=B-\tilde{l}\cdot\mathbf{e}, \tilde{E}=\{1,2,\cdots,n\}.$
  \item \label{item2:greedy}$k= \arg\min_{j \in \tilde{E}}\{w_j(x(j)+1)-w_j(x(j))\}$.
  \item {\bf If} $x+\mathbf{e}^{k}$ infeasible
        \begin{itemize}
         \item []$\tilde{E}\leftarrow \tilde{E}\backslash \{k\},\delta_k=s$.
        \end{itemize}
 \item [] {\bf else}, if $x+s\cdot\mathbf{e}^{k}$ infeasible
         \begin{itemize}
          \item [] $\tilde{E} \leftarrow \tilde{E}\backslash \{k\}, \alpha^{\prime}= \hat{c}(x,k), 
x\leftarrow x+\alpha^{\prime}\cdot\mathbf{e}^{k}, \tilde{B}\leftarrow \tilde{B}-\alpha^{\prime}, 
\delta_k=\alpha^{\prime}$.
          \end{itemize}
 \item[] {\bf else} 
    \begin{itemize}
    \item [] $x\leftarrow x+s\cdot\mathbf{e}^{k},\tilde{B}\leftarrow\tilde{B}-s, \delta_k=s.$
    \end{itemize} 
\item {\bf If} $\tilde{B}=0$ or $\tilde{E}= \emptyset$
      \begin{itemize}
       \item [] Output $x$ and STOP. 
      \end{itemize}
\item [] {\bf else}
       \begin{itemize}
         \item [] Go to step \ref{item2:greedy}.
        \end{itemize} 
 \end{enumerate}
\end{algorithm}
\vspace*{3mm}
$\hat{c}(x,k)$ is the saturation capacity defined as the maximum amount by which $x(k)$ can be 
increased without violating feasibility and is given as follows.
\[ \hat{c}(x,k) = \min\{g(A)-\sum_{j\in A}x(j)~\vert~ k\in A \subseteq E\}.\]
The value is then recorded in the variable $\delta_k$.

Hochbaum \cite{1994xxMOR_Hoc}, Moriguchi et al. \cite{200405MOR_MorShi} showed the proximity result 
that if $x^{\star}$ is the optimal solution to $\Pi_2$ and $x^{(s)}$ is the output of 
$\textsf{greedy}(s,\tilde{l})$, then, with $\delta= (\delta_1,\delta_2,\cdots,\delta_n)$,
\begin{equation}\label{eqn:proximity}
 x^{\star} \stackrel{(a)}{>} x^{(s)}-\delta \geq x^{(s)}-s\cdot\mathbf{e}.
\end{equation}
 $\textsf{GAP}$ executes $\textsf{greedy}(s,\tilde{l})$ in each iteration starting  from $s= \left 
\lceil \frac{B}{2n}\right \rceil$ in the first iteration and halving it in each iteration till it 
reaches unity. The output of $\textsf{greedy}(s,\tilde{l})$, deducted by $s$ units, provides an 
increasingly tighter lower bound to $x^\star$ in each iteration. The lowerbound serves as initial 
allocation for the variable $x$ in $\textsf{greedy}(s,\tilde{l})$ in the following iteration. When 
$s=1$, $\textsf{greedy}(s,\tilde{l})$ puts out the optimum value of $\Pi_2$. A formal description of 
$\textsf{GAP}$ is given below.  

\vspace*{3mm}
\begin{algorithm}{\sf GAP}
\begin{enumerate}
  \item $i=0, s_0=\left\lceil \frac{B}{2n}\right\rceil, l^{(s_0)}=l$.
  \item \label{item2:GAP}$x^{(s_i)} =  {\sf greedy}(s_i,l^{(s_i)})$.
  \item {\bf If} $s_i=1$  
       \begin{itemize}
       \item [] Output $x^{\star} = x^{(s_i)}$ and {\bf STOP}.
       \end{itemize}
  \item [] {\bf else} 
       \begin{itemize}
       \item [] $l^{(s_{i+1})}=\max\{l^{(s_i)},x^{(s_i)}-s_i\cdot\mathbf{e}\}, s_{i+1}= \left\lceil 
\frac{s_i}{2}\right\rceil, i\leftarrow i+1$. Go to step \ref{item2:GAP}.
       \end{itemize}
  \end{enumerate}
\end{algorithm}

\vspace*{3mm}
 
As a consequence of the proximity theorem, $l^{(s_i)}$, $i=1,2,\cdots$ form an increasing sequence 
of lower bounds of $x^{\star}$. Hence $\textsf{GAP}$ tightens the lowerbound on $x^{\star}$ in each 
iteration. 

Let $x \in P(g)$ , $\tilde{E}=\{j~\vert ~ x+\mathbf{e}^{j} \mbox{ is feasible)}\}$. If 
\begin{equation} \label{eqn:incrementcondition}
k \in \tilde{E} ~\mbox{ and } ~k=\arg\min_{j \in \tilde{E}}\{w_j(x(j)+1)-w_j(x(j)\},
\end{equation}
then \cite[Cor. 1]{1996xx_GilKovZap} tells that $x^{\star}(k) \geq x(k)$. Clearly, 
$\textsf{greedy}(s,\tilde{l})$ picks exactly such a $k$ as in (\ref{eqn:incrementcondition}) to 
update. Hence deducting the last increment of each variable from the output of 
$\textsf{greedy}(s,\tilde{l})$ results in a lower bound to the optimal value $x^{\star}$ which 
explains the inequality (a) of (\ref{eqn:proximity}).

The complexity of Hochbaum's algorithm is $\mathcal{O}(n\cdot(\log\,n+ F)\cdot \log\frac{B}{n})$. 
Hochbaum \cite{1994xxMOR_Hoc} showed that, for the case of linear ascending constraints, 
feasibility check is equivalent to the disjoint set union problem. A feasibility check step is same 
as a 'union-find' operation. A series of $n \cdot \log\left(\frac{B}{n}\right)$ 'union-find' 
operations can be done in $n \cdot \log\left(\frac{B}{n}\right)$ time \cite{1985xx_GabTar}. Hence 
$F$ is $\mathcal{O}(1)$.

Let $x_c^{\star}$ be the solution to $\Pi_1$ (with the continuous variables). Let $x^{\star}$ be 
the 
output of $\textsf{GAP}$. Moriguchi et al. \cite[Th. 1.3]{2011xx_MorShiTsu} showed that,
\[\|x_c^{\star}-x^{\star}\|_{\infty} \leq n-1.\]
By incrementing in steps of $\epsilon$ instead of $1$, it is now clear that $\textsf{GAP}$ can be 
used to find an $\epsilon\operatorname{-}$optimal solution in time 
$\mathcal{O}\left(n\cdot(\log\,n+F)\cdot\log\left(\frac{B}{n\epsilon}\right)\right)$ 

\section{A decomposition algorithm}
\label{sec:decompose}
In this section, we describe a decomposition algorithm proposed by Vidal et al. 
\cite{201404_VidJaiMac} that solves problem $\Pi$. The complexity of the algorithm in solving $\Pi$ 
is same as that of Hochbaum's algorithm. For the case when problem $\Pi$ has $n$ variables and $m$ 
($m < n$) ascending constraints, the performance of the algorithm beats Hochbaum's algorithm. The 
problem for this case is as follows. Let $s[0]=0,s[m]=n$ and $s[i] \in \{1,2,\cdots,n\}$ with $s[1] 
< s[2] < \cdots < s[m-1] < n$. 
\begin{align}\label{eqn:minimizeSumSeparableConvexFunctions2}
 \text{Problem}~~ \Pi_3 :~~ \text{Minimize}~~~~  & W(x) := \sum_{e=1}^{n}w_{e}(x(e)) \\
  \label{eqn:positivityBounded2}
  \text{subject to} ~~~    & x(e)  \in  [0, \beta(e)]\cap \mathbb{Z}, ~~~~~~~~e=1,2,\cdots,n,\\
   \label{eqn:ladderConstraint2}
                            & \sum_{e=1}^{s[l]} x(e)  \leq  a_l,
 ~~l=1,2,\cdots,m-1, \\
   \label{eqn:equalityConstraint2}
                            &  \sum_{e=1}^{n} x(e) = B. 
\end{align}
The algorithm solves $\Pi_3$ by a recursive process that leads to a hierarchy of sub-problems 
spanning across $1+\lceil\log\,m\rceil$ levels. At each level, multiple sub-problems are solved. 
Each sub-problem involves a subset of variables with optimization done over a single sum constraint 
and upper and lower bounds on variables. The solution to these sub-problems bounds the value of the 
respective variables in the sub-problems in the next higher level. 

We now give the main procedure that involves the tightening of the ascending constraints in $\Pi_3$ 
using the upper bounds on the variables and a call to the main recursive procedure 
$\textsf{Nestedsolve}(1,m)$ which will be described soon.

\begin{algorithm}{\sf General Solution Procedure}
 \begin{itemize}
  \item Tightening:
  \end{itemize}
\begin{enumerate}
  \item $\bar{a}_0 = 0$; $\bar{a}_m = B$.
  \item {\bf for} $i=1$ to $m-1$ {\bf do}
\begin{itemize}
  \item [] $\bar{a}_i \leftarrow 
\min\left\{\bar{a}_{i-1}+\sum_{e=s[i-1]+1}^{s[i]}\beta(e),a_i\right 
\}$.
\end{itemize}
\end{enumerate}  
\begin{itemize}
\item Hierarchical Resolution:
\end{itemize}
\begin{enumerate}  
\setcounter{enumi}{2}
  \item $(\bar{c}_1,\bar{c}_2,\cdots,\bar{c}_n) = (0,0,\cdots,0)$.
  \item $(\bar{d}_1,\bar{d}_2,\cdots,\bar{d}_n) =(\beta(1),\beta(2),\cdots,\beta(n))$.
  \item $(x(1),x(2),\cdots,x(n)) = {\sf Nestedsolve}(1,m)$.
  \item {\bf return} $(x(1),x(2),\cdots,x(n))$.
 \end{enumerate}
\end{algorithm}
The output of the recursive procedure $\textsf{Nestedsolve}(v,w)$ minimizes the following 
optimization problem.
\begin{align}\label{eqn:minimizeSumSeparableConvexFunctions3}
 \text{\textsf{Nested}}(v,w)~~  :~~ \text{Minimize}~~~~  & W(x) := 
\sum_{e=s[v-1]+1}^{s[w]}w_{e}(x(e)) \\
  \label{eqn:positivityBounded3}
  \text{subject to} ~~~    & 0 \leq x(e)  \leq \beta(e), ~~~~~~~~e=s[v-1]+1,\cdots,s[w],\\
   \label{eqn:ladderConstraint3}
                             \sum_{e=s[v-1]+1}^{s[l]}& x(e)  \leq  \bar{a}_l-\bar{a}_{v-1},
 ~~l=v,v+1,\cdots,w-1, \\
   \label{eqn:equalityConstraint3}
                            \sum_{e=s[v-1]+1}^{s[w]} &x(e) =\bar{a}_w-\bar{a}_{v-1}. 
\end{align}
The procedure $\textsf{Nestedsolve}(v,w)$ recursively solves the above problem by solving the 
sub-problems $\textsf{Nestedsolve}(v,t)$ and $\textsf{Nestedsolve}(t+1,w)$ where 
$t=\left\lfloor\frac{v+w}{2} \right\rfloor$. These sub-problems are further divided in the same 
manner and at the lowest level, the sub-problems involve optimization over a single sum constraint. 
Consider the following optimization problem with single sum constraint and bounds on variables.
  \begin{align}\label{eqn:minimizeSumSeparableConvexFunctions4}
 \text{\textsf{RAP}}(v,w)~~  :~~ \text{Minimize}~~~~  & W(x) := \sum_{e=s[v-1]+1}^{s[w]}w_{e}(x(e)) 
\\
       \label{eqn:positivityBounded4}
             \text{subject to}& ~~~~  \hat{c}_e \leq x(e)  \leq \hat{d}_e, 
~~~~~e=s[v-1]+1,\cdots,s[w],  \\
 \label{eqn:equalityConstraint4}       & \sum_{e=s[v-1]+1}^{s[w]} x(e) =\bar{a}_w-\bar{a}_{v-1}.
\end{align}
The subproblems at the lowest level of the recursion is of the form $\textsf{RAP}(v,v)$ with 
$\hat{c}_e=0$ and $\hat{d}_e= \beta_e$ for $e=s[v-1]+1,\cdots,s[v]$. 
We now describe the procedure to obtain the optimal solution to $\textsf{Nested}(v,w)$ from the 
solutions to $\textsf{Nested}(v,t)$ and $\textsf{Nested}(t+1,w)$. Let the solution to 
$\textsf{Nested}(v,t)$ and $\textsf{Nested}(t+1,w)$ be 
$(x^{\downarrow}(s[v-1]+1),\cdots,x^{\downarrow}(s[t]))$ and 
$(x^{\uparrow}(s[t]+1),\cdots,x^{\uparrow}(s[w]))$. Theorem 1 and 2 of \cite{201404_VidJaiMac} shows 
that the optimal solution to $\textsf{Nested}(v,w)$, 
$x^{\star}=(x^{\star}(s[v-1]+1),\cdots,x^{\star}(s[w]))$, satisfies the following inequalities with 
$x(e)=x^{\star}(e)$ for all $e \in \{s[v-1]+1,\cdots,s[w]\}$.
\begin{align}
 \label{eqn:downarrow} 0 \leq x(e)&\leq x^{\downarrow}(e) ~~~~ e \in \{s[v-1]+1,\cdots,s[t]\}, \\
 \label{eqn:uparrow} \beta(e) \geq x(e) &\geq x^{\uparrow}(e)  ~~~~ e \in \{s[t]+1,\cdots,s[w]\}.
\end{align}
It is easy to see that any $x= (x(s[v-1]+1),\cdots,x(s[w]))$ that satisfy (\ref{eqn:downarrow}), 
(\ref{eqn:uparrow}), and (\ref{eqn:equalityConstraint3}) is a feasible solution to 
$\textsf{Nested}(v,w)$. Also, $x^{\star}$ satisfy these three constraints as observed earlier. This 
shows that $x^{\star}$ can be obtained by solving $\textsf{RAP}(v,w)$ with the following bounds on 
variables.
 \begin{align}
  \label{eqn:downarrow2} \hat{c}_e=0,~&\hat{d}_e= x^{\downarrow}(e) ~~~~ e \in 
\{s[v-1]+1,\cdots,s[t]\}, \\
 \label{eqn:uparrow2} \hat{c}_e = x^{\uparrow}(e),~& \hat{d}_e = \beta(e)  ~~~~ e \in 
\{s[t]+1,\cdots,s[w]\}.
 \end{align}
$\textsf{Nestedsolve}(v,w)$ is described below.
\begin{algorithm}${\sf Nestedsolve}(v,w)$
\begin{enumerate}
 \item {\bf if} $v=w$ {\bf then}
      \begin{itemize}
      \item[] $(x(s[v-1]+1,\cdots,s[v]) ={\sf RAP}(v,v)$
     \end{itemize}
\item[] {\bf else}
      \begin{itemize}
      \item[] $t\leftarrow \left\lfloor\frac{v+w}{2} \right\rfloor$
      \item[] $(x(s[v-1]+1),\cdots,x(s[t])) = {\sf Nestedsolve}(v,t)$
      \item[] $(x(s[t]+1),\cdots,x(s[w])) = {\sf Nestedsolve}(t+1,w)$
      \item[] {\bf for} $i=s[v-1]+1$ to $s[t]$ {\bf do}
             \begin{itemize}
               \item[] $(\hat{c}_i,\hat{d}_i)=(0,x(i))$
             \end{itemize}
       \item[] {\bf for} $i=s[t]+1$ to $s[w]$ {\bf do}
             \begin{itemize}
               \item[] $(\hat{c}_i,\hat{d}_i)=(x(i),\beta(i))$
             \end{itemize}
       \item[] $(x(s[v-1]+1),\cdots,s[w]) ={\sf RAP}(v,w)$
      \end{itemize}  
\end{enumerate}

\end{algorithm}

The initial tightening of the constraints and initialization steps takes $\mathcal{O}(n)$ steps. The 
main recursive procedure, $\textsf{Nestedsolve}(1,m)$ is solved as a hierarchy of $\textsf{RAP}$ 
sub-problems, with $h=1+\left \lceil \log\,m\right\rceil $ levels of recursion. Each level $i \in 
\{1,2,\cdots,h\}$ has $2^{h-i}$ $\textsf{RAP}$ sub-problems. A $\textsf{RAP}(v,w)$ sub-problem is 
solved by the method of Frederickson and Johnson \cite{1982xx_FreJoh} that works in 
$\mathcal{O}(n\cdot\log(B/n))$ steps. A straightforward calculation shows that 
$\textsf{Nestedsolve}(1,m)$ works in $\mathcal{O}(n\cdot\log\,m\cdot\log(B/n))$ steps.
\section{A Taut-String Solution in a Special Case} \label{sec:string}
In this section, we consider the minimization of an interesting subclass of separable convex 
functions known as $d\operatorname{-}$separable convex functions \cite{1971xxMS_Vei} subject to 
ascending constraints (\ref{eqn:ladderConstraint})-(\ref{eqn:equalityConstraint}). We will begin 
with a slightly more general setting. Let $(E,g)$ be a polymatroid. Let $\phi:\mathbb{R} \rightarrow 
\mathbb{R}$ be a continuously differentiable and strictly convex function. Fix $d = 
(d_1,d_2,\cdots,d_n) \in \mathbb{R}^{E}_{+}$.

The objective function $W$ of $\Pi_1$ now has $w_e(x(e))=d_e\cdot\phi\left(\frac{x(e)}{d_e}\right)$. 
Such a $W$ is called $d\operatorname{-}$separable.

We now state and prove a result that, for a fixed $d$, the minimizer is the same for any $\phi$ that 
is continuously differentiable and strictly convex. Further, the minimizer has a special structure.

\begin{lemma}\label{lemma:lexopt}
 Let $W$ in (\ref{eqn:separableObjective}) be separable with 
$w_e(x(e))=d_e\cdot\phi\left(\frac{x(e)}{d(e)}\right)$, where $\phi$ is continuously differentiable 
and strictly convex. The $x^{\star}$ that minimizes $W$ over the bases of the polymatroid $(E,g)$ is 
the lexicographically optimal base of the polymatroid with respect to the weight vector $d$.
\end{lemma}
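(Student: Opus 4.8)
The plan is to invoke Theorem~\ref{thm:opt2} and then reconcile the two lexicographic notions by means of a common monotone change of variables. First I would compute the derivative of $w_e$. With $w_e(t) = d_e\,\phi(t/d_e)$ and $d_e > 0$, the chain rule gives the derivative $h_e(t) = \phi'(t/d_e)$. Since $\phi$ is strictly convex and $\mathcal{C}^1$, the map $\phi'$ is continuous and strictly increasing; composing it with the continuous strictly increasing map $t \mapsto t/d_e$ shows that each $h_e$ is continuous and strictly increasing, so the hypotheses of the $h$-lexicographic framework of Section~\ref{sec:prelim} are met. Theorem~\ref{thm:opt2} then says directly that the minimizer $x^{\star}$ of $W$ over $B(g)$ is exactly the $h$-lexicographically optimal base for this $h$.

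Next I would introduce $\tilde{h}_e(t) = t/d_e$, the family that defines the lexicographically optimal base with respect to the weight vector $d$, and note that the whole remaining task is to show that the $h$-lexicographically optimal base and the $\tilde{h}$-lexicographically optimal base coincide. The essential observation is that $h_e = \psi \circ \tilde{h}_e$ for every $e$, where the single function $\psi := \phi'$ is strictly increasing and \emph{independent of} $e$. Because $\psi$ is strictly increasing, it commutes with sorting: for any base $y$, if $T(\tilde{h}(y)) = (c_1, \ldots, c_n)$ is the increasingly ordered sequence of the values $\tilde{h}_e(y(e))$, then applying $\psi$ entrywise yields the values $h_e(y(e))$, whose increasing ordering is precisely $T(h(y)) = (\psi(c_1), \ldots, \psi(c_n))$.

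Then I would compare two arbitrary bases $x$ and $y$ under the two orderings. Since $\psi$ is strictly increasing, hence injective and order-preserving, equal coordinates of $T(\tilde{h}(\cdot))$ map to equal coordinates of $T(h(\cdot))$, and at the first coordinate where $T(\tilde{h}(x))$ and $T(\tilde{h}(y))$ differ the direction of the strict inequality is preserved under $\psi$. Consequently $T(h(x)) \ge_{\text{lex}} T(h(y))$ holds if and only if $T(\tilde{h}(x)) \ge_{\text{lex}} T(\tilde{h}(y))$ holds. Therefore the base that maximizes $T(h(\cdot))$ lexicographically is the same base that maximizes $T(\tilde{h}(\cdot))$ lexicographically; that is, the $h$-lexicographically optimal base \emph{is} the lexicographically optimal base with respect to $d$. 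Combining this with the identification from the first paragraph gives the claim, and in particular exhibits the advertised fact that the minimizer does not depend on the choice of $\phi$.

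I expect the main obstacle to be the order-statistics step together with the strict monotonicity of $\psi = \phi'$: these are exactly what allow the common, $e$-independent change of variables to pass unchanged through both the sorting operation $T(\cdot)$ and the lexicographic comparison. Strict convexity of $\phi$ is used precisely to guarantee this strictness, so that ties among the sorted values are neither created nor destroyed, while the positivity $d_e > 0$ ensures $\tilde{h}_e$ is increasing rather than decreasing; without either hypothesis the correspondence between the two lexicographic maximizers could fail.
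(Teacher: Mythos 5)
Your proposal is correct and follows essentially the same route as the paper: invoke Theorem~\ref{thm:opt2} with $h_e(t)=\phi'(t/d_e)$ and then observe that, since the strictly increasing $\phi'$ is common to all $e$, the $h$-lexicographic ordering coincides with the lexicographic ordering with respect to the weight vector $d$. You merely spell out the sorting/order-preservation step that the paper leaves implicit.
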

\begin{proof}
$W$ in (\ref{eqn:separableObjective}) is a separable convex function with 
\[w_e(x(e))= d_e\cdot\phi\left(\frac{x(e)}{d_e}\right).\]
Let $h_e$ be the derivative of $w_e$. By Theorem \ref{thm:opt2}, the minimizer of 
(\ref{eqn:separableObjective}) is the $h\operatorname{-}$lexicographically optimal base of the 
polymatroid $(E,g)$, where
we have
\[h_e(x(e))= \phi^{\prime}\left(\frac{x(e)}{d_e}\right),~~~~e \in E.\]
Since $h_e=\phi^{\prime}$ for all $e$ and $\phi^{\prime}$ is increasing, 
$h\operatorname{-}$lexicographically optimal base is the lexicographically optimal base with respect 
to the weight vector $d$. 
\end{proof}

Given the flexibility in $\phi$, let us choose $\phi=(u^2+1)^{1/2}$ (Veinott Jr. 
\cite{1971xxMS_Vei}). Then 
\begin{equation}\label{eqn:quadveinott}
  W(x)= \sum_{i=1}^{n}d_i\left(\left(\frac{x(i)}{d_i}\right)^{2}+1\right)^{1/2}= 
\sum_{i=1}^{n}\left(x(i)^{2}+d_i^{2}\right)^{1/2}
\end{equation}
Let us further restrict attention to ascending constraints of 
(\ref{eqn:ladderConstraint})-(\ref{eqn:equalityConstraint}). Then the objective function 
(\ref{eqn:quadveinott}) and constraints (\ref{eqn:ladderConstraint})-(\ref{eqn:equalityConstraint}) 
have the following geometric interpretation. 
Let $D_i= \sum_{e=1}^{i}d_e$, $E_i=\sum_{e=1}^{i}\alpha(e)$ and $X_i=\sum_{e=1}^{i}x(e)$. Also, let 
$D_0=E_0=0$. The constraints (\ref{eqn:ladderConstraint}) and (\ref{eqn:equalityConstraint}) are 
then 
\begin{eqnarray}
 X_i &\geq& E_i, ~~~~i=1,2,\cdots,n-1\nonumber\\
 X_n &=&E_n.\nonumber
\end{eqnarray}
A vector $x$ is feasible iff $(D_i,X_i)$ lies above $(D_i,E_i)$  in $x\operatorname{-}y$ plane for 
$i=1,2,\cdots,n-1$. Also, $(D_0,X_0)$ and $(D_n,X_n)$ must coincide with $(D_0,E_0)$ and 
$(D_n,E_n)$, respectively. Define feasible path as the path formed by the line segments joining 
$(D_{i-1},X_{i-1})$ and $(D_{i},X_{i})$ for $i=1,2,\cdots,n$ for a feasible $X_1,X_2,\cdots,X_n$. 
The length of the path corresponding to a feasible point $X_1,X_2,\cdots,X_n$ gives the value of the 
objective function in (\ref{eqn:quadveinott}) at the point. It is now obvious that the following 
{\it taut-string method} finds the minimum length path among all feasible paths, and hence the 
optimal solution, as described below.
\begin{itemize}
 \item Place pins at the points $(D_i,E_i)$, $i=0,1,\cdots,n$.
 \item Tie a string to the pin at origin and run the string above the points $(D_i,E_i)$, 
$i=1,2,\cdots,n$.
 \item Pull the string tight. The string traces the minimum length path from the origin to the point 
$(D_n,E_n)$.
\end{itemize}
\vspace*{2mm}
\begin{figure}[ht]
\centering
\includegraphics[scale=0.4]{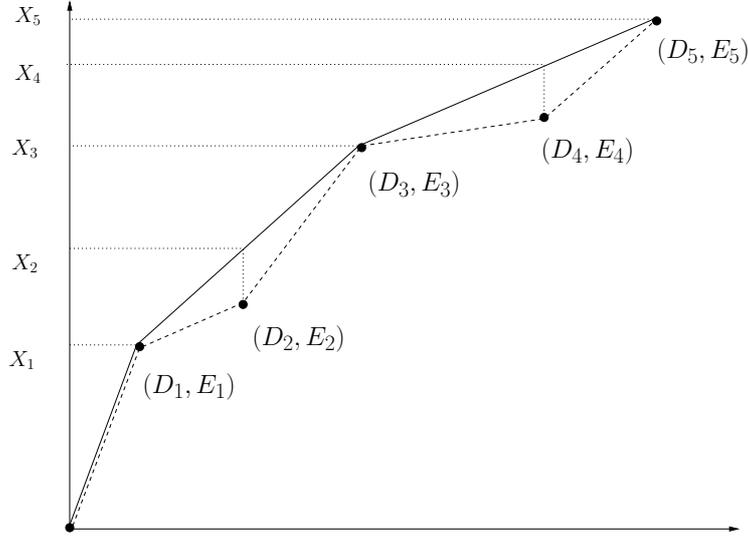}
\caption{The taut string solution.}
\label{fig:tss}
\end{figure}
\vspace*{2mm}

 Figure \ref{fig:tss} shows the {\it taut-string solution} for the points $(D_i,E_i)$, 
$i=1,2,\cdots,5$. All the paths starting from the origin and ending at $(D_5,E_5)$ that lie above 
the broken line correspond to feasible paths. The bold line in the figure traces the path of the 
tightened string. Since $E_i$ is increasing in $i$, $X_i$ corresponding to the shortest path is 
also 
increasing as can be observed from the figure and hence the optimal $x$ belongs to 
$\mathbb{R}^{E}_{+}$. The {\it taut string method} solves the special case of minimization of a 
$d\operatorname{-}$separable objective subject to ascending constraints.

It is clear that the taut string traces the concave cover, denoted by $\mathcal{C}$, which has the 
following properties.
\begin{itemize}
\item $\mathcal{C}$ is piece-wise linear and concave.  
\item $\mathcal{C}(0)=0, \mathcal{C}(D_n) = E_n \mbox{ and } \mathcal{C}(D_k) \geq E_k, \forall k$.
 \item The slope change-points of the piece-wise linear function lie on a subset of the set of 
points $\left\{D_1,D_2,\cdots,D_{n-1}\right\}$ and $\mathcal{C}(D_i)=E_i$ if $D_i$ is a slopechange 
point.
\end{itemize}

Note that the concave cover of a set of points is completely specified by the slope change points.
The minimum length path is also the graph of the least concave majorant of the points $(D_i,E_i)$, 
$i=1,2,\cdots,n$. Veinott Jr.'s \cite{1971xxMS_Vei} computation of the optimal $x$ corresponding to 
the minimum length path requires the computation of the least concave majorant of the points 
$(D_i,E_i)$, $i=0,1,\cdots,n$. The algorithm of \cite{1971xxMS_Vei} runs in $\mathcal{O}(n^2)$ 
steps\footnote{Veinott Jr. \cite{1971xxMS_Vei} considered another problem as well, one with upper 
and lower bounds on the variables on top of the ascending constraints. Veinott Jr. 
\cite{1971xxMS_Vei} provided a taut string solution and a common algorithm for both problems. It is 
this common algorithm that takes $\mathcal{O}\left(n^2\right)$ steps.}. 

In the next section, we provide an $\mathcal{O}(n)$ algorithm for finding the concave cover of the 
set of points $(D_0,E_0),(D_1,E_1),(D_2,E_2),\cdots,(D_n,E_n)$.
function being weighted $\alpha\operatorname{-}$fair utility function,
$(\sum_{e=1}^{i}\alpha(e),\sum_{e=1}^{i}p_e^{1/\alpha}),~ i=1,2,\cdots,n$. This also motivates us to 
look for an efficient algorithm to find the concave cover of a set of points in $x\operatorname{-}y$ 
plane.

\section{String Algorithm}
\label{sec:fba}
In the previous section, we described the method proposed by Veinott Jr. that reduces problem $\Pi$ 
for the case of a $d\operatorname{-}$separable objective function to a geometrical problem of 
finding the concave cover of the set of points $(D_0,E_0),(D_1,E_1),(D_2,E_2),\cdots,(D_n,E_n)$ in 
$x\operatorname{-}y$ plane. Let the points be denoted as $t_0,t_1,t_2,\cdots,t_{n}$. In this 
section, we describe an algorithm that finds the concave cover of these points in $\mathcal{O}(n)$ 
steps. The algorithm is an adaptation of the String Algorithm of Muckstadt and Sapra 
\cite{2010xxSPR_MucSap} that finds a ``convex envelope'' of a set of points for an inventory 
management problem.

 Consider a piece-wise linear function with the slope change abscissa locations being a subset of 
$\left\{D_0,D_1,D_2,\cdots,D_n\right\}$. The piece-wise linear function is concave if the slopes of 
the adjacent straight line segments are decreasing; more precisely, if $t_i=(D_i,E_i)$ corresponds 
to a $D_i$ where slope changes, then 
\begin{eqnarray}
 \mbox{Slope of } t_i-t_{LN(i)} & > \mbox{Slope of } t_{RN(i)}-t_i,\label{eqn:concavitycond}
\end{eqnarray}
where $LN(i)$ \mbox{is the index of the slope change point closest to the left of }$D_i$, and 
$RN(i)$ \mbox{is the index of the slope change point closest to the right of }$D_i$. See figure 
\ref{fig:figa} taking $i=1$.

The algorithm takes as input $t_0,t_1,\cdots,t_n$. The algorithm checks the concavity condition in 
(\ref{eqn:concavitycond}) at every point. If the condition is not satisfied at a point $t_i$, then 
$t_i$ is dropped. After performing a sequence of such eliminations, the algorithm puts out the slope 
change points of the desired concave cover. The algorithm is formally described below.

\begin{algorithm}\label{alg:fb} {\sf String Algorithm}
 \begin{enumerate}
  \item \textbf{Input}: A sequence of points $t_0,t_1,\cdots,t_n$ with increasing (or nondecreasing 
abscissas). 
  \item \textbf{Initialize}:$j=0$, and $LN(i)=i-1$, for $i=1,2,\cdots,n$ and $RN(i)=i+1$, for 
$i=0,1,\cdots,n-1$.	
  \item \textbf{Forward Step}:
       \begin{itemize}
       \item [] $j\leftarrow j+1$.
       \item [] \textbf{if} $j\neq n$ 
                 \begin{itemize}
                 \item[] $i= j$.
                 \item[] Go to \textbf{Backward Step}.
                 \end{itemize}
      \item [] \textbf{else}
                       \begin{itemize}
                        \item [] \textbf{return} $RN$.
                        \end{itemize}
        \end{itemize}

\item \textbf{Backward Step}: 
    \begin{itemize}

    \item [] \textbf{if}  condition (\ref{eqn:concavitycond}) is satisfied at point $t_i$
         \begin{itemize}
         \item[] go to \textbf{Forward Step}.
         \end{itemize}
    \item [] \textbf{else}
         
         \begin{itemize}
           \item [] $LN(RN(i))= LN(i),RN(LN(i)) = RN(i)$,$RN(i)=$\textbf{null}.
         \item [] \textbf{if} $LN(i) \neq 0$
            \begin{itemize} 
             \item [] $i\leftarrow LN(i)$ \item[] go to \textbf{Backward Step}.
            \end{itemize} 
      \item [] \textbf{else} 
           \begin{itemize} 
            \item [] go to \textbf{Forward Step}.
           \end{itemize}
      \end{itemize}
 \end{itemize}
\end{enumerate}
\end{algorithm}

\begin{figure}[h!]
\centering
\begin{tabular}{cc}
\subfloat[Backward step at 
$j=1$]{\label{fig:figa}\includegraphics[width=7cm,height=6cm]{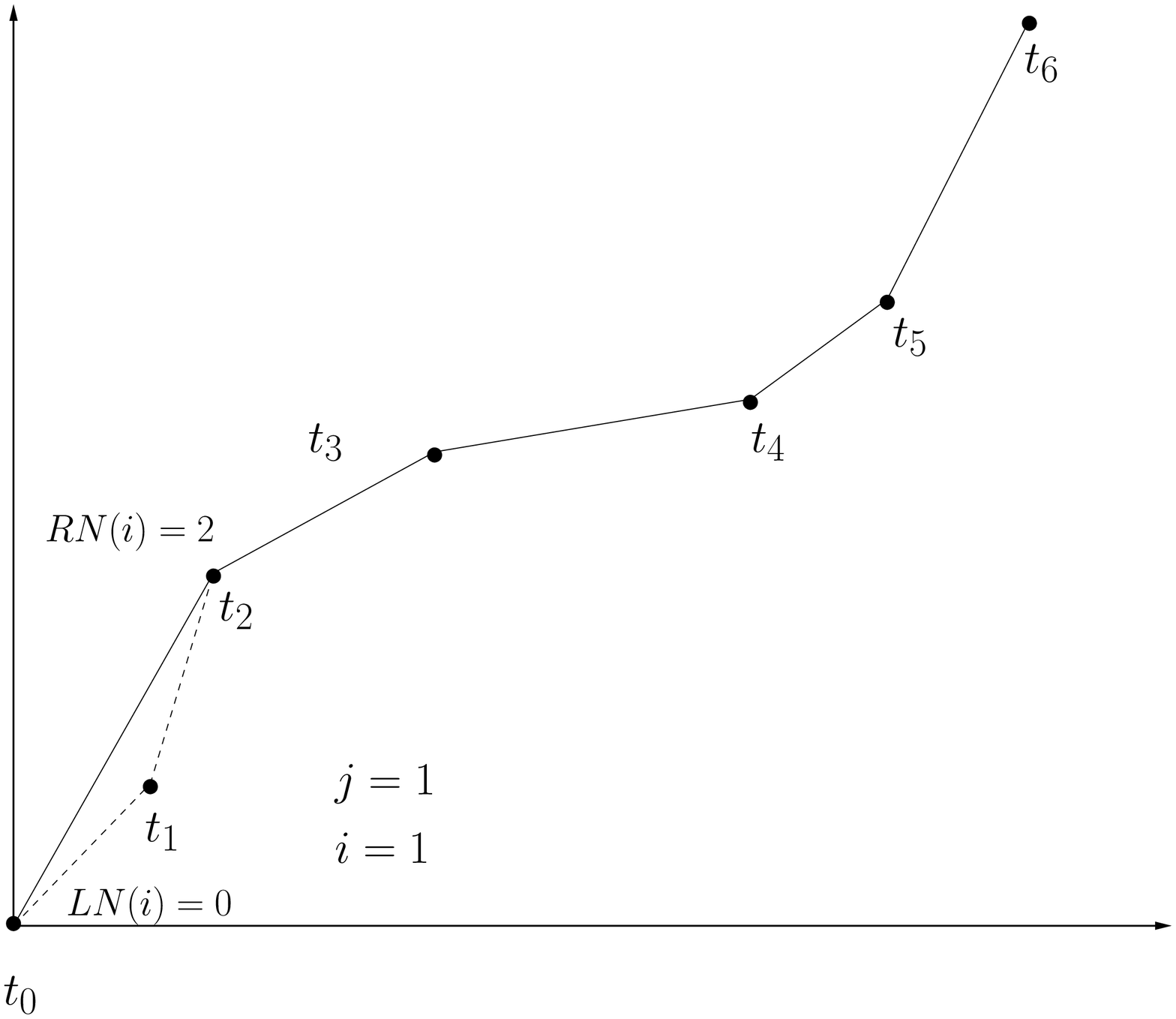}}&
\subfloat[Backward step at 
$j=4$]{\label{fig:figb}\includegraphics[width=7cm,height=6cm]{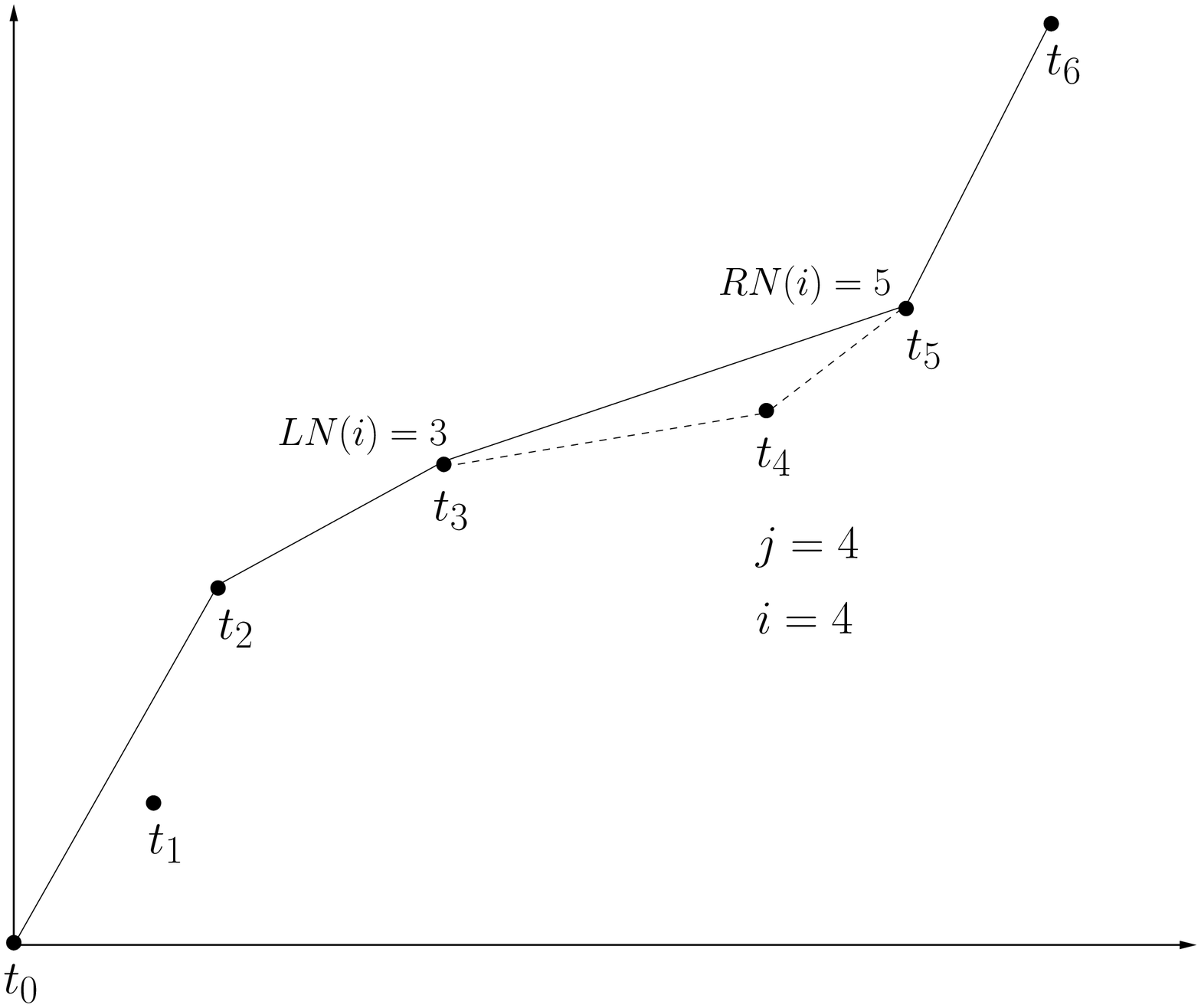}} \\ 
\subfloat[Backward step at $j=5$ at node 
$t_5$]{\label{fig:figc}\includegraphics[width=7cm,height=6cm]{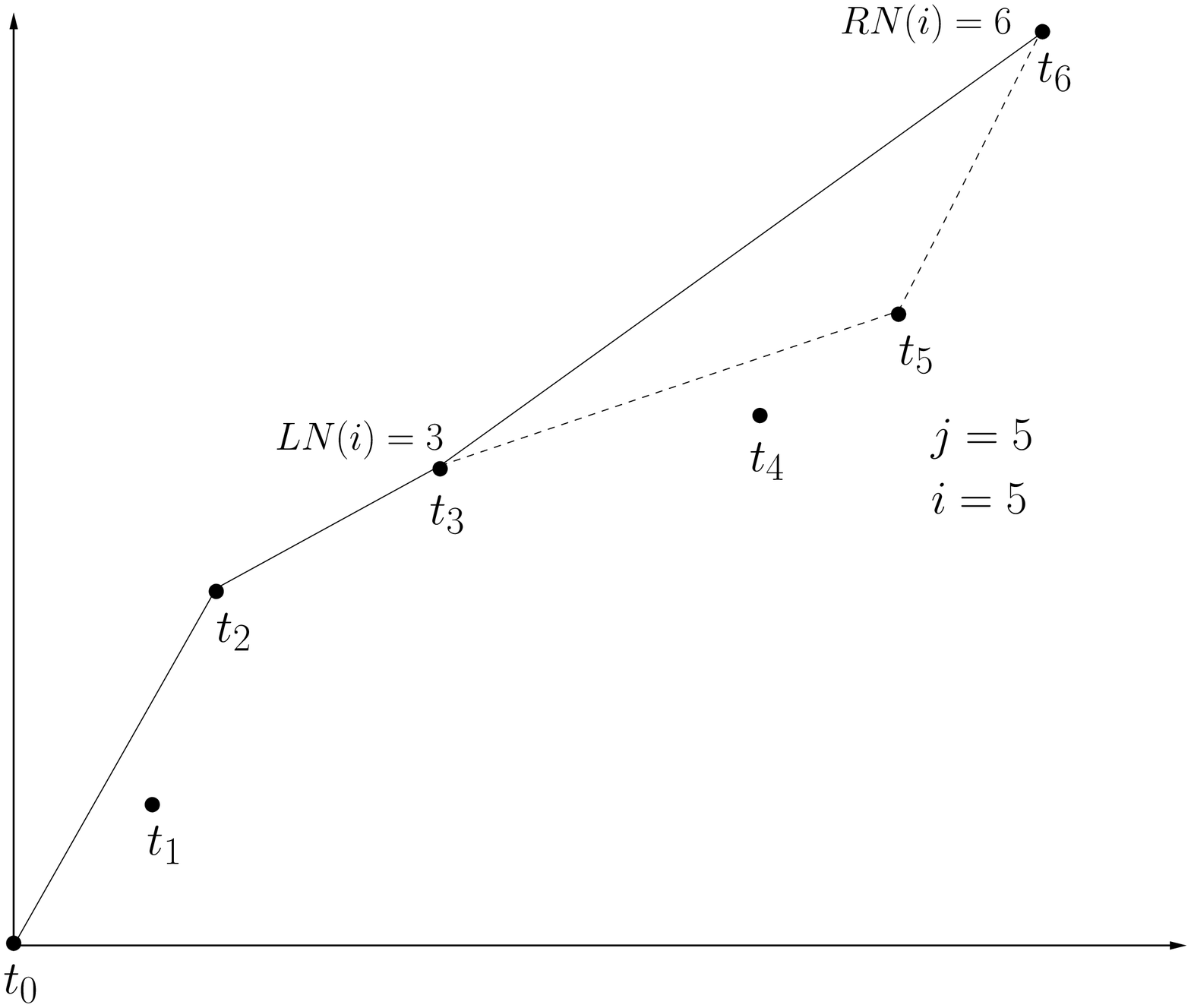}} &
\subfloat[Backward step at $j=5$ at node 
$t_3$]{\label{fig:figd}\includegraphics[width=7cm,height=6cm]{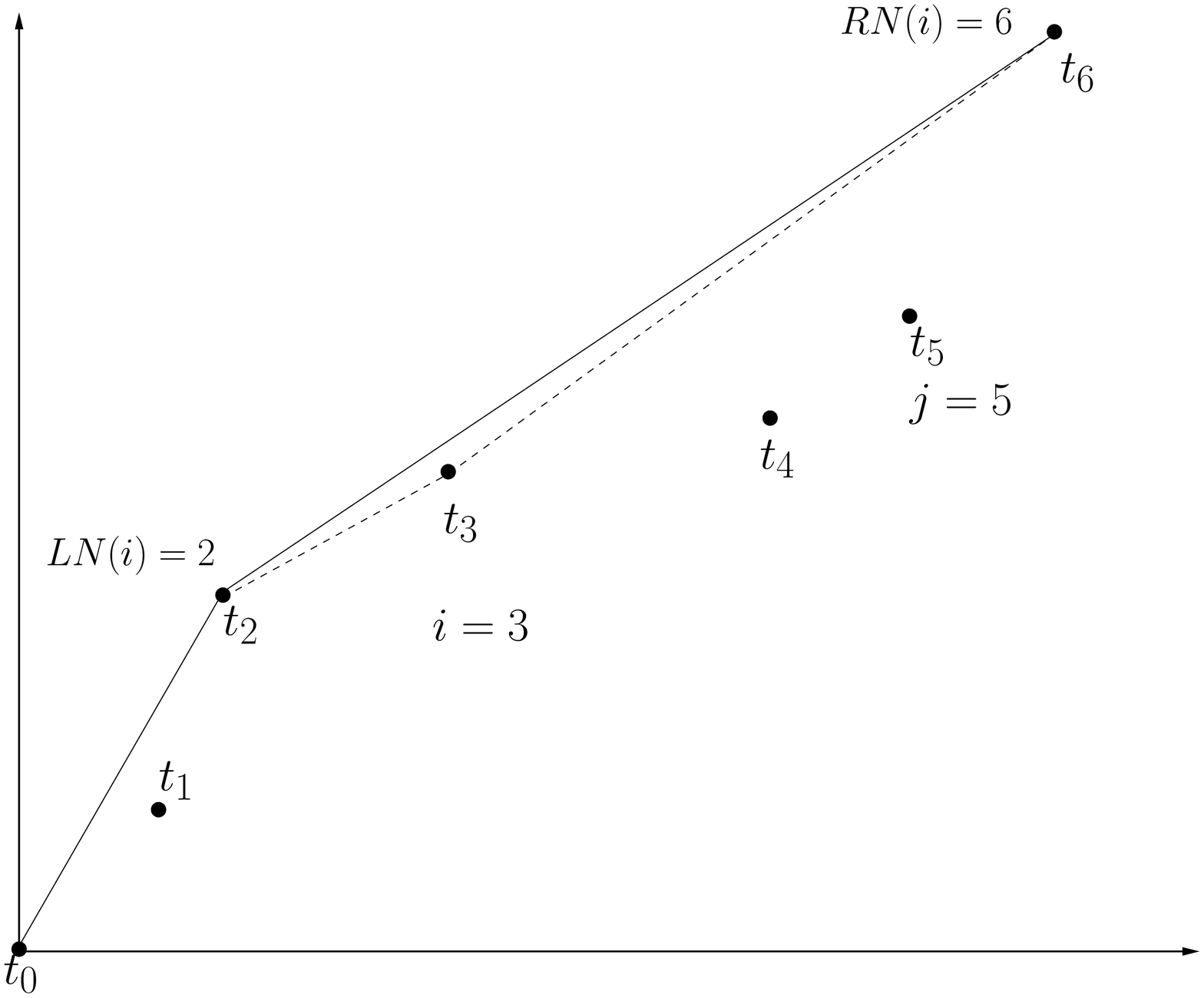}} \\
\end{tabular}
\caption{String algorithm}
\label{fig:fig}
\end{figure}
Figure \ref{fig:fig} shows the key steps of the String Algorithm for the case $n=6$. Figure 
\ref{fig:figb} shows the backward step of the string algorithm at $t_4$. As the concavity condition 
in (\ref{eqn:concavitycond}) is violated  at $t_4$, the point is dropped. Now, the left adjacent 
slope change point of $D_5$ is $D_3$ and the right adjacent point of $D_3$ is $D_5$. These changes 
are made in the backward step of the algorithm by updating the $LN$ and $RN$ pointers. 

At the completion of the algorithm, the indices corresponding to the valid entries (entries 
excluding {\bf null}) in $RN$ are the indices of the slope change points of the concave cover. This 
is straightforward to check and a formal proof is omitted. The piece-wise linear function formed by 
bold lines in the figure \ref{fig:figd} is the concave cover of the set of points 
$t_0,t_1,t_2,\cdots,t_6$. All points except $t_2$ are dropped and $D_2$ is the only slope change 
point for the concave cover.

The following lemma gives the complexity of String Algorithm.

\begin{prop}
 The complexity of the String Algorithm is $\mathcal{O}(n)$.
\end{prop}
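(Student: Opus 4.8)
The plan is to use an aggregate (amortized) counting argument of the kind standard for stack-based concave-hull procedures. The key structural observation is that the algorithm maintains the surviving candidate slope-change points as a doubly linked list through the pointers $LN$ and $RN$, and that each point $t_i$ can be \emph{removed} from this list at most once. Removal occurs precisely when the concavity condition (\ref{eqn:concavitycond}) fails at $t_i$ during a backward step, at which moment the assignment $RN(i)=\textbf{null}$ (together with the splicing updates $LN(RN(i))=LN(i)$ and $RN(LN(i))=RN(i)$) excises $t_i$ permanently; the algorithm never again visits a point whose $RN$ entry has been nulled. Consequently the total number of removals over the entire run is bounded by $n$. This single global budget is what drives the linear bound.

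First I would bound the number of forward steps. The counter $j$ is incremented once per forward step, it never decreases, and the algorithm terminates as soon as $j=n$. Hence there are exactly $n$ forward steps, each performing $O(1)$ work (the increment, the test $j\neq n$, and the assignment $i=j$).

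Next I would account for the backward steps, separating them by outcome. Each backward step at $t_i$ either (i) finds that condition (\ref{eqn:concavitycond}) holds, in which case it proceeds to a forward step; or (ii) finds the condition violated, in which case it removes $t_i$ and then, if $LN(i)\neq 0$, continues the cascade, while if $LN(i)=0$ it proceeds to a forward step. A backward step that continues the cascade performs one removal, so by the observation above there are at most $n$ such steps in total. A backward step that instead proceeds to a forward step can be charged to the unique forward step it immediately precedes, and since there are exactly $n$ forward steps there are at most $n$ such charged steps. Therefore the number of backward steps is at most $2n$, each costing $O(1)$ (a constant number of pointer updates plus one evaluation of the slope comparison in (\ref{eqn:concavitycond})).

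Summing the two contributions, $O(n)$ for the forward steps and $O(n)$ for the backward steps, yields a total running time of $O(n)$. The step I expect to write most carefully, and the only genuine subtlety, is the backward cascade: a single increment of $j$ may trigger an arbitrarily long run of consecutive backward steps, so any naive per-iteration bound fails. The resolution is exactly the global-budget argument above, namely that the cascades are paid for collectively by the at-most-$n$ removals, since no point re-enters the list once it has been excised. I would state this charging explicitly to make clear that the cost is amortized across the whole execution rather than bounded within each iteration.
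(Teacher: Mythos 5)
Your proof is correct and follows essentially the same amortized counting argument as the paper: $n$ forward steps of constant cost, and backward steps bounded by charging them to the at-most-$n$ point removals. Your accounting is in fact slightly more careful than the paper's, which asserts that the number of backward-step executions equals the number of points dropped and thereby glosses over the non-dropping backward steps that you correctly charge to their succeeding forward steps.
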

\begin{proof}
The number of executions of the forward step is $n$, each consisting of constant number of 
operations. Hence the complexity of forward procedure is $\mathcal{O}(n)$. The number of times the 
backward step is executed is equal to the number of points dropped which is less than $n$. In a 
single execution of the backward step, the concavity condition is checked. If a point is dropped, 
then the values of $LN(RN(i))$ and $RN(LN(i))$ are modified. All these amount to constant number of 
operations which does not scale with $n$. Hence the total complexity of the backward procedure is 
also $\mathcal{O}(n)$. This completes the proof of the proposition.
\end{proof}

\section{Conclusion}
We discussed several algorithms that solve problem $\Pi$, a separable convex optimization problem 
with linear ascending constraints, that arises as a core optimization in several resource 
allocation 
problems. The algorithms can be classified as greedy-type or decomposition-type algorithms. The 
best 
in class algorithms have complexity 
$\mathcal{O}\left(n\cdot(\log\,n)\cdot\log\left(B/(n\epsilon)\right)\right)$ to get an 
$\epsilon\operatorname{-}$optimal solution, with $B$ being the total number of units to be 
allocated. We also considered a $d\operatorname{-}$separable objective. In this special case, the 
solution is the lexicographically optimal base of a polymatroid formed by the constraint set of 
$\Pi$. We then argued that finding the lexicographically optimal base is equivalent to finding the 
least concave majorant of a set of points on the $\mathbb{R}^{2}_{+}$ quadrant. We then described 
an 
$\mathcal{O}(n)$ algorithm for this problem. This is significant because of its applicability to 
the 
minimization of popular $d\operatorname{-}$separable functions such as $\alpha\operatorname{-}$fair 
utility functions which are widely used as utility functions in network utility maximization 
problems.

\bibliographystyle{siam}
\bibliography{IEEEabrv,wisl}

\appendix

\section{Proof of correctness and optimality}
In this section, we give the proof of Theorem \ref{th3}.
 \subsection{Feasibility}
We begin by addressing a necessary and sufficient condition for feasibility.

\vspace*{.15in}
\begin{lemma} \label{lemma:fb}
The feasible set is nonempty if and only if (\ref{eqn:commonSense1}) is satisfied.
\end{lemma}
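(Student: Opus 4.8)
The plan is to prove the two directions separately, with necessity essentially immediate and sufficiency resting on one explicit construction. Throughout I would write $A_l = \sum_{e=1}^l \alpha(e)$ and $B_l = \sum_{e=1}^l \beta(e)$, with $A_0 = B_0 = 0$, so that condition (\ref{eqn:commonSense1}) reads simply $A_l \le B_l$ for $1 \le l \le n$, and I would note at the outset that $\beta(e) > 0$ makes $l \mapsto B_l$ strictly increasing while $\alpha(e) \ge 0$ makes $l \mapsto A_l$ nondecreasing; both monotonicity facts get used below.

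For necessity I would take any $x$ in the feasible set and read off the inequalities directly from the constraints. The upper bounds in (\ref{eqn:positivityBounded}) give $\sum_{e=1}^l x(e) \le B_l$ for every $l$. For $l < n$ the ascending constraint (\ref{eqn:ladderConstraint}) then yields $A_l \le \sum_{e=1}^l x(e) \le B_l$, and for $l = n$ the equality (\ref{eqn:equalityConstraint}) yields $A_n = \sum_{e=1}^n x(e) \le B_n$. Hence (\ref{eqn:commonSense1}) holds, and this direction needs nothing beyond the defining constraints.

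For sufficiency I would assume (\ref{eqn:commonSense1}) and build a feasible point by packing mass as far to the left as the upper bounds permit until the required total $A_n$ is reached. If $A_n = 0$ then all $\alpha(e) = 0$ and $x \equiv 0$ works, so assume $A_n > 0$. Since $B_n \ge A_n$, the smallest index $k$ with $B_k \ge A_n$ is well defined, and minimality together with $B_0 = 0$ forces $B_{k-1} < A_n$. I would set $x(e) = \beta(e)$ for $e < k$, then $x(k) = A_n - B_{k-1}$, and $x(e) = 0$ for $e > k$. The bound constraints (\ref{eqn:positivityBounded}) hold because $B_{k-1} < A_n \le B_k$ gives $0 < x(k) = A_n - B_{k-1} \le B_k - B_{k-1} = \beta(k)$, the other coordinates being $\beta(e)$ or $0$; and the total is $A_n$ by construction, giving (\ref{eqn:equalityConstraint}).

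The one place where the hypothesis truly does the work is the verification of the ascending constraints (\ref{eqn:ladderConstraint}), so I expect that to be the only genuine (and still minor) obstacle. For $l < k$ the partial sum is $\sum_{e=1}^l x(e) = B_l$, and (\ref{eqn:commonSense1}) supplies $B_l \ge A_l$ exactly as required; for $k \le l < n$ the partial sum equals $A_n$, which dominates $A_l$ since $l \mapsto A_l$ is nondecreasing. Thus every constraint is met and the feasible set is nonempty, closing the sufficiency direction and hence the equivalence. (As an alternative I could instead invoke Proposition~\ref{propmorton} to identify the feasible set with the base $B(f_\beta)$ and argue that this base is nonempty, but the elementary water-filling construction is self-contained and makes the role of (\ref{eqn:commonSense1}) transparent, so I would favor it.)
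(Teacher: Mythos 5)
Your proposal is correct and follows essentially the same route as the paper: necessity by reading the bound and ascending constraints off any feasible point, and sufficiency via the same left-packing construction ($x(e)=\beta(e)$ up to the first index where the cumulative upper bounds reach $\sum_e\alpha(e)$, a remainder at that index, zeros after). The only differences are cosmetic — you argue necessity directly rather than by contrapositive, and you spell out the feasibility verification that the paper dismisses with ``clearly.''
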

\vspace*{.15in}

\begin{proof}
Assume that (\ref{eqn:commonSense1}) is not satisfied for some $l$ and let $l'$ be the smallest 
among such indices. This implies that even if we assign the largest possible value for $x(e)$, i.e., 
$x(e)=\beta(e)$ for $e=1,2,\cdots,l'$, the constraint (\ref{eqn:ladderConstraint}) for $l=l'$ cannot 
be  satisfied, and hence the constraint set is empty.

To prove sufficiency, assume (\ref{eqn:commonSense1}) holds. Let $l=l'$ be the smallest index $l$ 
for which
\begin{equation}
  \sum_{e=1}^l \beta(e) \geq \sum_{e=1}^n \alpha(e)
\end{equation}
holds. Now, assign $x(e)=\beta(e)$, for $e=1,2,\ldots,l'-1$, assign
$$x(l')=\sum_{e=1}^n \alpha(e)-\sum_{e=1}^{l'-1} \beta(e),$$
and $x(e)=0$ for $e=l'+1,l'+2,\cdots,n$. Clearly, $x$ satisfies 
(\ref{eqn:ladderConstraint})-(\ref{eqn:equalityConstraint}) and is therefore a feasible point. This 
proves the lemma.
\end{proof}

\subsection{Proof of Theorem \ref{th3}}
In order to prove Theorem \ref{th3}, the following should be shown to ensure that the algorithm 
terminates and generates the desired allocation.
 \begin{enumerate}
  \item The set whose minimum is taken in (\ref{eqn:gamma}) should be nonempty at each iteration 
step.
  \item The assignments in (\ref{assign1}) and (\ref{assign2}) should yield a feasible allocation at 
each iteration step, and the reduced problem for the next iteration is a similar but smaller 
problem.
  \item The output of the algorithm should be feasible.
  \item The output should satisfy the sufficiency conditions for optimality in Theorem 
\ref{thm:opt}.
\end{enumerate}

We begin by proving that the minimum in (\ref{eqn:gamma}) is over a nonempty set. For this, we need 
the following lemma.

\vspace*{.15in}
\begin{lemma} \label{lemma1}
 At any iteration step $j$, for any $l$ with $1 \leq l < s(j-1)$, there exists an $\eta_{l}^{j}$ 
satisfying (\ref{eqn:zero}) if and only if
\begin{equation}\label{eqn:l1-3}
  0 \leq \sum_{e=l}^{s(j-1)-1}\alpha(e) \leq  \sum_{e=l}^{s(j-1)-1}\beta(e).
\end{equation}
\end{lemma}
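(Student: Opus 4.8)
The plan is to read (\ref{eqn:zero}) as the assertion that the level $a:=\sum_{e=l}^{s(j-1)-1}\alpha(e)$ is caught between the two monotone sweeps $\sigma^-(\eta):=\sum_{e=l}^{s(j-1)-1}H_e^-(\eta)$ and $\sigma^+(\eta):=\sum_{e=l}^{s(j-1)-1}H_e^+(\eta)$, and then to locate a crossing value of $\eta$ by a one-sided infimum. First I would record the structural facts about the summands that make the argument run. From the definitions of $H_e^-$ and $H_e^+$ and the stated semicontinuity of $i_e^-,i_e^+$ and $w_e^-,w_e^+$, each $H_e^-$ is nondecreasing and left-continuous, each $H_e^+$ is nondecreasing and right-continuous, and $0\le H_e^-(\eta)\le H_e^+(\eta)\le\beta(e)$ for every $\eta$, with $H_e^\pm(\eta)\to 0$ as $\eta\to-\infty$ and $H_e^\pm(\eta)\to\beta(e)$ as $\eta\to+\infty$. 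Summing over the block, $\sigma^-$ is nondecreasing and left-continuous, $\sigma^+$ is nondecreasing and right-continuous, $\sigma^-\le\sigma^+$, and both sweep from $0$ (as $\eta\to-\infty$) up to $b:=\sum_{e=l}^{s(j-1)-1}\beta(e)$ (as $\eta\to+\infty$). Checking these monotonicity and continuity claims at the three breakpoints of each $H_e^\pm$ is routine bookkeeping that I would compress into a line or two.

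The necessity direction is then immediate: if some $\eta$ satisfies (\ref{eqn:zero}), then $a\ge\sigma^-(\eta)\ge 0$ and $a\le\sigma^+(\eta)\le b$, which is exactly (\ref{eqn:l1-3}). For sufficiency, assume $0\le a\le b$ and, for the generic case $0<a<b$, set $\eta^\star:=\inf\{\eta:\sigma^+(\eta)\ge a\}$. This set is nonempty because $\sigma^+(\eta)\to b\ge a$, and bounded below because $\sigma^+(\eta)\to 0<a$, so $\eta^\star$ is finite. Since $\{\eta:\sigma^+(\eta)\ge a\}$ is an up-set and $\sigma^+$ is right-continuous, taking $\eta_n\downarrow\eta^\star$ inside it gives $\sigma^+(\eta^\star)\ge a$. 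On the other hand, every $\eta<\eta^\star$ lies outside the set, so $\sigma^-(\eta)\le\sigma^+(\eta)<a$; letting $\eta\uparrow\eta^\star$ and using left-continuity of $\sigma^-$ yields $\sigma^-(\eta^\star)\le a$. Hence $\sigma^-(\eta^\star)\le a\le\sigma^+(\eta^\star)$, i.e. $\eta^\star$ satisfies (\ref{eqn:zero}). Moreover, since no $\eta<\eta^\star$ can satisfy the right-hand inequality, $\eta^\star$ is the smallest such value, which matches the definition of $\eta_l^j$ in Algorithm \ref{algorithm} and simultaneously proves it is well defined.

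It remains to dispose of the endpoints and to flag where the genuine care is needed. When $a=b$ I would take $\eta$ large enough that $\sigma^+(\eta)=b$, and when $a=0$ I would take $\eta$ small enough that $\sigma^-(\eta)=0$; in each case the companion inequality holds trivially. The main obstacle is precisely the \emph{attainment} behaviour behind these endpoint cases and behind the limiting claims of the first paragraph: the extreme values $0$ and $b$ are reached for all sufficiently small (resp. large) finite $\eta$ only when the one-sided derivatives $w_e^+(0)$ and $w_e^-(\beta(e))$ are finite. If $w_e(0)=+\infty$ for some $e$ in the block, then $w_e^+(0)=-\infty$ and $H_e^-(\eta)>0$ for every finite $\eta$, so $\sigma^-$ approaches but never attains $0$; the degenerate case $a=0$ (forced by $\alpha(e)=0$ throughout the block) then needs separate treatment and is the one delicate point I would single out. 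Away from such degeneracies the infimum construction above is uniform and yields the claimed equivalence directly.
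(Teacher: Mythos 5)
Your proof is correct and takes essentially the same route as the paper: necessity follows from the bounds $0 \le \sum_e H_e^-(\eta) \le \sum_e H_e^+(\eta) \le \sum_e \beta(e)$, and sufficiency from monotonicity together with the fact that $H_e^-$ is the left-continuous companion of the right-continuous $H_e^+$. Your explicit infimum construction $\eta^\star=\inf\{\eta:\sigma^+(\eta)\ge a\}$ simply fills in the existence step that the paper asserts as following ``from these observations,'' and your flagging of the attainment issue at the endpoints $a=0$ and $a=b$ (when $w_e^+(0)$ or $w_e^-(\beta(e))$ fails to be finite) is a genuine delicacy that the paper's proof glosses over by taking $\min$ and $\max$ of these possibly infinite quantities.
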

\vspace*{.15in}

\begin{proof}
 Let $ \overline{\eta} = \max{\{w_{e}^{-}(\beta(e)):l \leq e < s(j-1)}\}$.
From the definition of $H_{e}^{+}$, for all $\eta \geq \overline{\eta}$, we have 
\[H_{e}^{+}(\eta)=\beta(e)~~~~(l \leq e < s(j-1)),\]
and so
\begin{equation}\label{eqn:l1-1}
 \sum_{e=l}^{s(j-1)-1}H_{e}^{+}(\eta)=\sum_{e=l}^{s(j-1)-1}\beta(e)~~~(\eta \geq \overline{\eta}).
\end{equation}
Next, define $ \underline{\eta}= \min{\{w_{e}^{+}(0):l \leq e < s(j-1)\}}$.
The definition of $H_{e}^{-}$ implies that for all $\eta \leq \underline{\eta}$, we have 
\[H_{e}^{-}(\eta)=0~~~~( l \leq e < s(j-1)),\]
and so
\begin{equation}\label{eqn:l1-2}
 \sum_{e=l}^{s(j-1)-1}H_{e}^{-}(\eta)=0~~~(\eta \leq \underline{\eta}  ).
\end{equation}
Necessity of (\ref{eqn:l1-3}) is then obvious from (\ref{eqn:l1-1}) and (\ref{eqn:l1-2}), since 
$H_{e}^{+}$ and  $H_{e}^{-}$ are nondecreasing. For sufficiency, in addition to the nondecreasing 
nature of $H_{e}^{+}$ and  $H_{e}^{-}$, we also have  $H_{e}^{+}=H_{e}^{-}$ at all points of 
continuity of $H_{e}^{+}$, and  $H_{e}^{-}$ is the left continuous version of the right continuous 
$H_{e}^{+}$. Given (\ref{eqn:l1-3}) and these observations, it follows that we can find an $\eta$ 
that satisfies (\ref{eqn:zero}).
\end{proof}

\vspace*{.15in}
\begin{prop}
If the feasible region is nonempty, then at any iteration step $j$, the index $l=1$ satisfies 
(\ref{eqn:one}). Hence the set over which the minimum is taken in (\ref{eqn:gamma}) is nonempty.
\end{prop}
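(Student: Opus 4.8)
The claim has two parts. First, that the index $l=1$ always satisfies~(\ref{eqn:one}) at every iteration $j$; second, that consequently the set in~(\ref{eqn:gamma}) is nonempty. The second part is immediate once the first is established: if $l=1$ satisfies~(\ref{eqn:one}), then $\eta_1^j$ exists, so the index set $\{\,l : 1 \le l < s(j-1),\ \eta_l^j \text{ exists}\,\}$ contains $1$ and is nonempty, and hence the minimum defining $\Gamma_j$ is taken over a nonempty set. So the entire burden is on showing that $\eta_1^j$ is defined.

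The plan is to reduce the existence of $\eta_1^j$ to Lemma~\ref{lemma1}. By that lemma, $\eta_1^j$ exists if and only if the feasibility-type inequality
\[
  0 \le \sum_{e=1}^{s(j-1)-1} \alpha(e) \le \sum_{e=1}^{s(j-1)-1} \beta(e)
\]
holds (taking $l=1$ in~(\ref{eqn:l1-3})). The left inequality is trivial since each $\alpha(e) \ge 0$. The right inequality is exactly condition~(\ref{eqn:commonSense1}) specialized to the index $s(j-1)-1$; and by Lemma~\ref{lemma:fb}, the hypothesis that the feasible set is nonempty is equivalent to~(\ref{eqn:commonSense1}) holding for all $l = 1, \ldots, n$. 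In particular it holds for $l = s(j-1)-1$, provided $1 \le s(j-1)-1 \le n$, i.e.\ provided $s(j-1) \ge 2$. This last point is guaranteed by the algorithm's structure: the iteration only proceeds to step $j$ when $s(j-1) > 1$, since the algorithm exits as soon as some $s(j)=1$. Thus for every iteration $j$ that is actually reached, $s(j-1)-1 \ge 1$ is a legitimate index in $\{1,\ldots,n\}$, and~(\ref{eqn:commonSense1}) applies directly.

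The main (and essentially only) obstacle is bookkeeping: one must be careful that the range $1 \le l < s(j-1)$ is nonempty and that the index $s(j-1)-1$ to which~(\ref{eqn:commonSense1}) is applied lies in the valid range $\{1,\ldots,n\}$. Both follow from $2 \le s(j-1) \le n+1$, where $s(0)=n+1$ handles the first iteration and the exit rule ``if $s(j)=1$, exit'' handles all subsequent ones. Once this is in place, there is no genuine analytic difficulty: the argument is simply the chain ``nonempty feasible set $\Rightarrow$~(\ref{eqn:commonSense1}) at index $s(j-1)-1$ $\Rightarrow$~(\ref{eqn:l1-3}) for $l=1$ $\Rightarrow$ (by Lemma~\ref{lemma1}) existence of $\eta_1^j$ $\Rightarrow$ nonempty minimizing set.'' I would write it in exactly that order, invoking Lemma~\ref{lemma:fb} and Lemma~\ref{lemma1} as the two key cited results.
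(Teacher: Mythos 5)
Your proposal is correct and follows essentially the same route as the paper's own proof: invoke Lemma~\ref{lemma:fb} to get~(\ref{eqn:commonSense1}) at the index $s(j-1)-1$, then apply Lemma~\ref{lemma1} with $l=1$ to conclude that $\eta_1^j$ exists, so the set in~(\ref{eqn:gamma}) is nonempty. The extra bookkeeping you supply about $2 \le s(j-1) \le n+1$ is a harmless (and slightly more careful) elaboration of what the paper leaves implicit.
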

\vspace*{.15in}

\begin{proof}
If the feasible region is nonempty, we have from Lemma \ref{lemma:fb} that (\ref{eqn:commonSense1}) 
holds for $l = s(j-1)-1$, i.e.,
\[\sum_{e=1}^{s(j-1)-1}\alpha(e) \leq  \sum_{e=1}^{s(j-1)-1}\beta(e).\]
By Lemma \ref{lemma1}, $\eta_{l}^{j}$ exists. Consequently, the set over which the minimum is taken 
now contains $\eta_{1}^{j}$ and is therefore nonempty.
\end{proof}

\vspace*{.15in}
We shift attention to Subroutine \ref{subroutine1}.
We  show that the $s(j)$ put out by the Subroutine \ref{subroutine1} satisfies a property that is 
crucial to prove the feasibility of the output of Algorithm \ref{algorithm}.
This is the property that  the partial sums of $H_{e}^{+}(\Gamma_{j})$ from $s(j)$ to each of the 
tied indices exceeds the corresponding partial sums of $\alpha(e)$. Since we will show equality of 
the constraints at $s(j)$, the above property is necessary for feasibility.

\vspace*{.15in}
\begin{prop}\label{prop:3}
The index $s(j)$ chosen by Subroutine \ref{subroutine1} satisfies the following property:

Let $l_{1},l_{2},\cdots,l_{r}$  be the indices that attain the minimum in (\ref{eqn:gamma}) with 
$s(j-1) > l_{1} > l_{2} \ldots > l_{r} \geq 1.$
Let $l_{p+1}$ with $1 \leq p+1 \leq r$ be the index chosen by Subroutine \ref{subroutine1} as 
$s(j)$. Denote $I=\{l_{1},l_{2},\ldots,l_{p}\}$. (When $p = 0$, $I$ is empty). Then
\begin{equation} \label{eqn:p3a}
 \sum^{l-1}_{e=s(j)}H_{e}^{+}(\Gamma_{j}) \geq \sum^{l-1}_{e=s(j)}\alpha(e)~~~~~~~~~(l \in I).
\end{equation}
\end{prop}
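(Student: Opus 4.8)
The plan is to show that (\ref{eqn:p3a}) is a purely combinatorial consequence of the two-pointer sweep inside Subroutine~\ref{subroutine1}, requiring none of the defining properties of the tied indices $l_1,\dots,l_r$. First I would record a reformulation that trivializes the partial sums appearing in (\ref{eqn:subroutine1}) and (\ref{eqn:p3a}). Define, for $b=1,2,\dots,r$, the cumulative quantity
$$V(b) := \sum_{e=l_b}^{l_1-1}\bigl(H_e^+(\Gamma_j)-\alpha(e)\bigr),$$
so that $V(1)=0$ and, for any $a\le b$, telescoping gives $\sum_{e=l_b}^{l_a-1}(H_e^+(\Gamma_j)-\alpha(e)) = V(b)-V(a)$ since $l_a\ge l_b$. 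Consequently the test (\ref{eqn:subroutine1}) evaluated at $k=l_i$, $m=l_t$ reads exactly $V(t)\ge V(i)$, and the desired conclusion (\ref{eqn:p3a}), to be applied with $s(j)=l_{p+1}$ and $l=l_a\in I$ (so $a\le p<p+1$), reads exactly $V(p+1)\ge V(a)$. Thus it suffices to prove that the index $q:=p+1$ returned by the subroutine satisfies $V(q)\ge V(a)$ for every $a\le q$.

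Next I would restate the loop of Subroutine~\ref{subroutine1} in these terms: starting from $(i,t)=(1,r)$, it advances $i\mapsto i+1$ when $V(t)\ge V(i)$ and retreats $t\mapsto t-1$ otherwise, halting when $i=t$ and returning the common value $q$. Since each move strictly decreases $t-i$, the loop terminates after at most $r-1$ steps; the unique-minimizer case $r=1$ is the degenerate instance $i=t=1$. I would then prove the single invariant that carries the argument: the current window $[i,t]$ always contains an index maximizing $V$ over $\{1,\dots,r\}$. This holds at initialization since the window is all of $\{1,\dots,r\}$. For the inductive step, let $a^\star\in[i,t]$ be a global maximizer. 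If the subroutine advances $i$ (so $V(t)\ge V(i)$), then either $a^\star\ne i$, in which case $a^\star\in[i+1,t]$, or $a^\star=i$, in which case $V(t)\ge V(i)=\max V$ forces $V(t)=\max V$, so $t\in[i+1,t]$ is itself a maximizer. If instead it retreats $t$ (so $V(t)<V(i)\le \max V$), then $t$ is not a maximizer, whence $a^\star\ne t$ and $a^\star\in[i,t-1]$. In every case the new window retains a global maximizer.

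Applying the invariant at termination, when the window has collapsed to the single index $q$, shows that $q$ maximizes $V$ over $\{1,\dots,r\}$; in particular $V(q)\ge V(a)$ for all $a$, which by the reformulation is precisely (\ref{eqn:p3a}) (indeed it yields the stronger statement that the inequality holds for every tied index, not only those in $I$). The main obstacle is getting the reformulation and the invariant exactly right: a naive segment-wise invariant such as ``$\sum_{e=l_t}^{l_i-1}(H_e^+(\Gamma_j)-\alpha(e))\le 0$ for consecutive tied indices'' is \emph{false} — subtracting the two defining inequalities of a tied index only controls $H_e^-$, not $H_e^+$ — so one must resist bounding individual segments and instead track the global maximizer of $V$ through the sweep. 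Once phrased via $V$, the claim is independent of the special structure of the $l_b$'s and reduces to the elementary two-pointer fact proved above.
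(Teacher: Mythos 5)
Your proof is correct: the reformulation via $V(b)=\sum_{e=l_b}^{l_1-1}\bigl(H_e^+(\Gamma_j)-\alpha(e)\bigr)$ faithfully encodes both the test (\ref{eqn:subroutine1}) at $(k,m)=(l_i,l_t)$ as $V(t)\ge V(i)$ and the goal (\ref{eqn:p3a}) as $V(p+1)\ge V(a)$ for $a\le p$, and the window invariant is preserved in both branches of the sweep. The paper analyzes the same loop but with a different invariant and granularity: it inducts over the alternating \emph{phases} of the sweep, recording the explicit partial-sum inequalities that hold when a run of $i$-increments stalls at some $p'$ (its (\ref{eqn:p3b})--(\ref{eqn:p3c})) and when the ensuing run of $t$-decrements stalls at some $r'$ (its (\ref{eqn:p3d})), and sums these to re-establish, at the start of each phase, that the current $t$-pointer dominates all indices up to the current $i$-pointer --- in your notation, $V(r')\ge V(s)$ for all $s\le p'$. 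Your step-by-step invariant (``$[i,t]$ retains a global maximizer of $V$'') is cleaner, avoids the case bookkeeping around $p'=1$ and the $s(j)=l_r$ base case that the paper handles separately, and yields the slightly stronger conclusion that $s(j)$ globally maximizes $V$ over \emph{all} tied indices (so the non-strict reverse inequality holds for tied indices below $s(j)$, a fact the paper must partially rederive in the proof of Lemma \ref{l4}); the paper's phase induction establishes only the one-sided dominance that the proposition actually requires. Your closing observation is also consistent with the paper: its proof of Proposition \ref{prop:3} likewise uses nothing about the tied indices beyond the mechanics of the sweep, so the combinatorial core is indeed independent of (\ref{eqn:one}).
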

\vspace*{.15in}

\begin{proof}
If $s(j)=l_{r}$ at the end of Subroutine \ref{subroutine1}, i.e., the iterate $t$ did not decrement 
at all, it follows that (\ref{eqn:subroutine1}) holds for all $k \in \{l_{1},l_{2},\ldots,l_{r-1}\} 
\cup \{s(j-1)\}$ and $m=l_{r}$. Hence (\ref{eqn:p3a}) is trivially true.

Now suppose $s(j) > l_{r}$. In Subroutine \ref{subroutine1}, the variables $i$ and $t$ have initial 
values $1$ and $r$, respectively.
If (\ref{eqn:subroutine1}) is satisfied for $m=l_{r}$ and $k=l_{i}$, then the value of $i$ is 
increased by unity, until (\ref{eqn:gamma}) is not satisfied at some $i=p'$ and $t=r$, i.e.,
\begin{equation} \label{eqn:p3b}
 \sum_{e=l_{r}}^{l_{p'}-1}H_{e}^{+}(\Gamma_{j}) < \sum_{e=l_{r}}^{l_{p'}-1}\alpha(e),
\end{equation}
and therefore (\ref{eqn:subroutine1}) is satisfied for all $s$ satisfying $1 \leq s < p'$, i.e.,
\begin{equation} \label{eqn:p3c}
 \sum_{e=l_{r}}^{l_s-1}H_{e}^{+}(\Gamma_{j}) \geq \sum_{e=l_{r}}^{l_s-1}\alpha(e)~~~(1 \leq s < p').
\end{equation}
Moreover,  $l_{p'} \geq s(j)$. If (\ref{eqn:subroutine1}) is not satisfied already for $m=l_{r}$ and 
$k=l_{i}$, then $p'=1$ and (\ref{eqn:p3c}) is irrelevant. Now the algorithm reduces the value of $t$ 
in steps of unity until (\ref{eqn:subroutine1}) is satisfied for $m=l_{r'}$ and $k=l_{p'}$, i.e.,
\begin{equation}\label{eqn:p3d}
 \sum_{e=l_{r'}}^{l_{p'}-1}H_{e}^{+}(\Gamma_{j}) \geq \sum_{e=l_{r'}}^{l_{p'}-1}\alpha(e),
\end{equation}
with $l_{r'} \leq s(j)$. From (\ref{eqn:p3b}) and (\ref{eqn:p3c}), we get (for $p' > 1$)
\begin{equation}\label{eqn:p3e}
 \sum_{e=l_{p'}}^{l_s-1}H_{e}^{+}(\Gamma_{j}) \geq \sum_{e=l_{p'}}^{l_s-1}\alpha(e)~~~(1 \leq s < 
p').
\end{equation}
Summing (\ref{eqn:p3d}) and (\ref{eqn:p3e}) when $p' > 1$, or simply considering (\ref{eqn:p3d}) 
when $p' = 1$, we get
\begin{equation} \label{eqn:p3f}
  \sum_{e=l_{r'}}^{l_s-1}H_{e}^{+}(\Gamma_{j}) \geq \sum_{e=l_{r'}}^{l_s-1}\alpha(e)~~~(1 \leq s 
\leq p').
\end{equation}
But (\ref{eqn:p3f}) is just (\ref{eqn:subroutine1})  for $m=l_{r'}$ and $k=l_{s}$ with $1 \leq s 
\leq p'$. Proceeding by induction, $r'$ decrements, $p'$ increments, and eventually they coincide at 
$s(j)$, and (\ref{eqn:p3a}) holds for all $1 \leq l < s(j-1)$. Validity of (\ref{eqn:p3a}) for 
$l=s(j-1)$ is clear from the definition of  $\eta_{s(j)}^{j}$.
\end{proof}
\vspace*{.15in}

The following lemma is a corollary to Proposition \ref{prop:3} and is useful in proving optimality.

\vspace*{.15in}
\begin{lemma}\label{l4}
The sequence $\{\Gamma_{i}:i=1,2,3,\ldots,k\}$ put out by Algorithm \ref{algorithm} satisfies 
$\Gamma_{1} < \Gamma_{2} < \cdots < \Gamma_{k}$.
\end{lemma}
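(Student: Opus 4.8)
The plan is to prove that the thresholds $\Gamma_j$ produced across iterations are strictly increasing, by showing that at iteration $j$, the threshold $\Gamma_{j-1}$ selected at the previous step is \emph{strictly smaller} than every candidate $\eta_l^j$ competing for the minimum in~(\ref{eqn:gamma}). Since $\Gamma_j$ is the smallest among the $\eta_l^j$, establishing $\Gamma_{j-1} < \eta_l^j$ for all admissible $l$ at iteration $j$ immediately yields $\Gamma_{j-1} < \Gamma_j$. I would argue by a direct comparison at a fixed index $l$ with $1 \le l < s(j)$, using the defining inequalities~(\ref{eqn:one}) for $\eta_l^{j-1}$ and $\eta_l^j$ together with the key partial-sum property~(\ref{eqn:p3a}) of Proposition~\ref{prop:3}.

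First I would fix an arbitrary $l$ with $1 \le l < s(j)$ and split the relevant partial sum at the breakpoint $s(j)$, writing
\[
\sum_{e=l}^{s(j-1)-1}\alpha(e) = \sum_{e=l}^{s(j)-1}\alpha(e) + \sum_{e=s(j)}^{s(j-1)-1}\alpha(e),
\]
and similarly for the $H_e^{\pm}$ sums. The crucial structural facts I would bring in are: (i) at $s(j)$, the ascending constraint is met with equality using the values assigned by Subroutine~\ref{subroutine2}, which sit in $[H_e^-(\Gamma_j), H_e^+(\Gamma_j)]$ over the block $s(j) \le e < s(j-1)$; and (ii) the tied-index property~(\ref{eqn:p3a}), guaranteeing $\sum_{e=s(j)}^{l'-1} H_e^+(\Gamma_j) \ge \sum_{e=s(j)}^{l'-1}\alpha(e)$ for the relevant indices. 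Combining these with the monotonicity of $H_e^+$ and $H_e^-$ in $\eta$, I would derive that the inequality chain~(\ref{eqn:one}) defining $\eta_l^j$ forces its threshold to exceed $\Gamma_{j-1}$: informally, the mass that could be accommodated over the block $[l, s(j)-1]$ at level $\Gamma_{j-1}$ already falls short of what~(\ref{eqn:one}) requires at iteration $j$, so the smallest feasible $\eta$ must move strictly past $\Gamma_{j-1}$.

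The main obstacle I anticipate is handling the \emph{strictness} of the inequality rather than merely $\Gamma_{j-1} \le \Gamma_j$. Because $H_e^-$ and $H_e^+$ are only nondecreasing (flat on the intervals where $w_e$ is linear, and with jumps where $w_e$ is nondifferentiable), one must rule out the degenerate scenario where two consecutive thresholds coincide. The resolution should hinge on the left/right continuity asymmetry noted in the excerpt: $H_e^-$ is the left-continuous version and $H_e^+$ the right-continuous version of the same underlying curve, and $\Gamma_{j-1}$ was chosen as the \emph{smallest} $\eta$ satisfying its defining inequalities. I expect that the definition of $s(j-1)$ via Subroutine~\ref{subroutine1}, forcing equality of the constraint at $s(j-1)$, eliminates index $l = s(j-1)$ (and any $l \ge s(j-1)$) from contention at iteration $j$, so the competing indices at iteration $j$ are genuinely new and their thresholds cannot equal $\Gamma_{j-1}$.

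Finally I would assemble the pieces: taking the minimum over all admissible $l$ in~(\ref{eqn:gamma}) preserves the strict bound, giving $\Gamma_j = \min_l \eta_l^j > \Gamma_{j-1}$, and a straightforward induction on $j$ (with the base case being the trivially defined $\Gamma_1$) yields the full chain $\Gamma_1 < \Gamma_2 < \cdots < \Gamma_k$. The cleanest presentation is likely to isolate the single-index strict comparison as the heart of the argument and then dispatch the minimization and induction as routine, leaning throughout on Proposition~\ref{prop:3} and the equality-at-$s(j)$ property as the two load-bearing facts.
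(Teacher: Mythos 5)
Your overall strategy is the same as the paper's: show that every candidate $\eta_l$ at the next iteration strictly exceeds the current minimum, by splitting the partial sums at the new breakpoint and invoking Proposition~\ref{prop:3}. (There is a harmless off-by-one in your setup: the displayed split and the range $1\le l<s(j)$ are those for comparing $\Gamma_j$ with $\eta_l^{j+1}$, while your prose speaks of $\Gamma_{j-1}$ versus $\eta_l^j$.) However, the place where you locate the strictness is wrong, and this leaves a genuine gap. The indices $l<s(j)$ competing at the next iteration are not ``genuinely new'' --- every one of them was already a candidate at iteration $j$, since the index range only shrinks from $\{1,\ldots,s(j-1)-1\}$ to $\{1,\ldots,s(j)-1\}$ --- and the left/right continuity asymmetry of $H_e^{\pm}$ plays no role in ruling out equality of consecutive $\Gamma$'s. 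The actual mechanism is that for \emph{every} $l<s(j)$ one must establish the strict shortfall $\sum_{e=l}^{s(j)-1}H_e^+(\Gamma_j)<\sum_{e=l}^{s(j)-1}\alpha(e)$; monotonicity of $H_e^+$ then forces the smallest $\eta$ satisfying the right-hand inequality of (\ref{eqn:zero}) over the block $[l,s(j)-1]$ to exceed $\Gamma_j$ strictly.

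Establishing that shortfall requires a case split you do not make. For a \emph{non-tied} $l<s(j)$ (one with $\eta_l^j>\Gamma_j$ or undefined), the shortfall over $[l,s(j-1)-1]$ follows from the minimality in the definition of $\eta_l^j$ (or from Lemma~\ref{lemma1} when $\eta_l^j$ does not exist), and subtracting the tied-index property of $s(j)$ itself over $[s(j),s(j-1)-1]$ gives the claim. For a \emph{tied} $l<s(j)$ --- an index that also attains the minimum in (\ref{eqn:gamma}) but is passed over by Subroutine~\ref{subroutine1} --- neither Proposition~\ref{prop:3} nor the equality of the constraint at $s(j)$ helps: such an $l$ satisfies $\sum_{e=l}^{s(j-1)-1}H_e^+(\Gamma_j)\ge\sum_{e=l}^{s(j-1)-1}\alpha(e)$, and subtracting another inequality of the same orientation yields nothing. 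The missing ingredient is extracted from the termination logic of Subroutine~\ref{subroutine1}: since $l$ was \emph{not} chosen as $s(j)$, there must exist $l'$ with $s(j)\le l'<s(j-1)$ for which (\ref{eqn:subroutine1}) fails, i.e.\ $\sum_{e=l}^{l'-1}H_e^+(\Gamma_j)<\sum_{e=l}^{l'-1}\alpha(e)$; only then does subtracting (\ref{eqn:p3a}) over $[s(j),l'-1]$ produce the strict shortfall over $[l,s(j)-1]$. Your two declared load-bearing facts therefore do not suffice; you need this third fact about Subroutine~\ref{subroutine1}, together with the tied/non-tied dichotomy to deploy it.
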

\vspace*{.15in}

\begin{proof}
We will prove that at iteration $j+1$, the number $\eta_{l}^{j+1}$ which is the (smallest) solution 
(if it exists) of
\begin{equation}
  \label{eqn:He-eta}
  \sum_{e=l}^{s(j)-1}H_{e}^{-}(\eta) \leq \sum_{e=l}^{s(j)-1}\alpha_{e} \leq 
\sum_{e=l}^{s(j)-1}H_{e}^{+}(\eta)
\end{equation}
for $l$ satisfying $1 \leq l \leq s(j)$, is strictly greater than $\Gamma_{j}$. Hence their minimum 
$\Gamma_{j+1}$ is also strictly greater than $\Gamma_{j}$, and the proof will be complete.

For  indices $l$ that satisfy (\ref{eqn:zero}) in the $j^{th}$ iteration with $l < s(j) $ and 
$\eta_{l}^{j}= \Gamma_{j}$ (i.e., $l$ is a tied index), there is some $l'$ satisfying $s(j-1) > l' 
\geq s(j)$ and
\begin{equation} \label{l4a}
 \sum^{l'-1}_{e=l}H_{e}^{+}(\Gamma_{j}) < \sum^{l'-1}_{e=l}\alpha(e);
\end{equation}
otherwise index $t$ in Subroutine \ref{subroutine1} would have pointed to $s(j)$, and an $s(j) > l$ 
would not have been picked by the subroutine. But from Proposition \ref{prop:3}, we also have
\begin{equation}\label{l4b}
 \sum^{l'-1}_{e=s(j)}H_{e}^{+}(\Gamma_{j}) \geq \sum^{l'-1}_{e=s(j)}\alpha(e).
\end{equation}
Subtract (\ref{l4b}) from (\ref{l4a}) to get
\begin{equation}\label{l4e}
 \sum_{e=l}^{s(j)-1}H_{e}^{+}(\Gamma_{j}) < \sum_{e=l}^{s(j)-1}\alpha(e).
\end{equation}
Hence $\eta_{l}^{j+1} > \Gamma_{j}$ for all such tied $l$.
For all nontied indices $l < s(j)$, i.e., indices that satisfy (\ref{eqn:zero}) in $j^{th}$ 
iteration but with $\eta_{l}^{j} > \Gamma_{j}$ or $\eta_{l}^{j}$ does not exist, we must have
\begin{equation}\label{l4c}
 \sum^{s(j-1)-1}_{e=l}H_{e}^{+}(\Gamma_{j}) < \sum^{s(j-1)-1}_{e=l}\alpha(e),
\end{equation}
and therefore after noting (\ref{l4e}) for the tied indices, we conclude that (\ref{l4c}) holds for 
all $l < s(j)$.
Furthermore, since $s(j)$ is a tied index and $\eta_{s(j)}^l = \Gamma_j$, we also have
\begin{equation}\label{l4d}
 \sum^{s(j-1)-1}_{e=s(j)}H_{e}^{+}(\Gamma_{j}) \geq \sum^{s(j-1)-1}_{e=s(j)}\alpha(e).
\end{equation}
Subtract (\ref{l4d}) from (\ref{l4c}) to get
\begin{equation}
 \sum^{s(j)-1}_{e=l}H_{e}^{+}(\Gamma_{j}) < \sum^{s(j)-1}_{e=l}\alpha(e)~~~(l < s(j)).
\end{equation}
Since $H_e^+$ is nondecreasing, the solution $\eta_l^{j+1}$ to (\ref{eqn:He-eta}) must be strictly 
larger than $\Gamma_j$, i.e., $\eta_{l}^{j+1} > \Gamma_{j}$, if the solution exists.
\end{proof}
\vspace*{.15in}

We next move to Subroutine \ref{subroutine2}. This subroutine assigns values to variables (from 
higher indices to lower indices) in stages over possibly several iterations. After each iteration, 
the assignment problem reduces to a similar but smaller problem. The next lemma is a step  to say 
that every index in between $s(j)$ and $s(j-1)$, both included, can be assigned successfully within 
Subroutine \ref{subroutine2} without a need to execute Subroutine~\ref{subroutine1} after every 
substage. In particular, $s(j)$ and $\Gamma_{j}$ would remain stable over the entire execution of 
Subroutine~\ref{subroutine2}.

\vspace*{.15in}
\begin{lemma}\label{l5}
In  step $5$ of Subroutine~\ref{subroutine2}, the indices satisfying $s(j) < l < l_{t}^{m}$ and 
(\ref{eqn:srt27})  is a subset of $I_{m+1}=I \cap \{l:l < l^{m}_{t}\}$. Hence the set $I_{m+1}$ 
contains all the indices $l < l^{m}_{t}$ that satisfy (\ref{eqn:srt27}).
\end{lemma}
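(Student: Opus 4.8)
The plan is to argue by contraposition. I will show that any index $l$ with $s(j) < l < l_t^m$ that is \emph{not} a tied index must violate~(\ref{eqn:srt27}); equivalently, every such $l$ satisfying~(\ref{eqn:srt27}) is forced to be tied, hence to lie in $I$, and being also $< l_t^m$ and satisfying~(\ref{eqn:srt27}) it then lands in $I_{m+1}$ by the very definition of $I_{m+1}$ in step~5. This yields the asserted inclusion, and the concluding ``hence'' statement follows since the reverse inclusion is definitional (every element of $I_{m+1}$ already satisfies~(\ref{eqn:srt27}) and, lying in $I$, exceeds $s(j)$).

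First I would record the one structural fact driving the argument: in step~5 the subroutine has not exited at step~4, so $l_t^m \neq s(j)$. Since $l_t^m$ is one of $l_1^m,\ldots,l_{p_m+1}^m$, with $l_{p_m+1}^m = s(j)$ and $I_m \subseteq I$, this forces $l_t^m \in I$, i.e. $l_t^m$ is a tied index with $\eta_{l_t^m}^{j} = \Gamma_j$. Being tied, $\Gamma_j$ satisfies~(\ref{eqn:zero}) at $l = l_t^m$, and in particular its \emph{right} half gives the suffix domination
\[
\sum_{e=l_t^m}^{s(j-1)-1} H_e^+(\Gamma_j) \;\geq\; \sum_{e=l_t^m}^{s(j-1)-1}\alpha(e).
\]

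Next, for a fixed non-tied $l$ with $s(j) < l < l_t^m$ I would establish, exactly as in the proof of Lemma~\ref{l4} (this is the content of~(\ref{l4c}), whose derivation never used $l < s(j)$),
\begin{equation*}
\sum_{e=l}^{s(j-1)-1} H_e^+(\Gamma_j) \;<\; \sum_{e=l}^{s(j-1)-1}\alpha(e). \tag{$\ast$}
\end{equation*}
There are two cases. If $\eta_l^{j}$ exists then, being non-tied, $\eta_l^{j} > \Gamma_j$; since $\eta_l^{j}$ is the smallest $\eta$ at which both halves of~(\ref{eqn:zero}) hold while the right half can only fail as $\eta$ is lowered, that right half is violated at $\Gamma_j$, which is $(\ast)$. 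If $\eta_l^{j}$ does not exist, Lemma~\ref{lemma1} gives $\sum_{e=l}^{s(j-1)-1}\alpha(e) > \sum_{e=l}^{s(j-1)-1}\beta(e)$, and since $H_e^+(\Gamma_j) \leq \beta(e)$ for every $e$, $(\ast)$ follows again. Subtracting the suffix domination from $(\ast)$ over the common tail $[l_t^m,\,s(j-1)-1]$ collapses the range to $[l,\,l_t^m-1]$ and yields
\[
\sum_{e=l}^{l_t^m-1} H_e^+(\Gamma_j) \;<\; \sum_{e=l}^{l_t^m-1}\alpha(e),
\]
so~(\ref{eqn:srt27}) fails at $l$, completing the contrapositive.

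The routine pieces here are the case split for $(\ast)$ and the telescoping subtraction, both short. The step needing genuine care — and where I would guard most against an off-by-one or a misidentified index — is pinning down that $l_t^m$ is truly a tied index in step~5, so that the suffix domination is available, together with the observation that the range in question is \emph{open} at $s(j)$. Indeed $l = s(j)$ does satisfy~(\ref{eqn:srt27}) by Proposition~\ref{prop:3} (with $l_t^m \in I$), yet $s(j) \notin I_{m+1}$; this is exactly why the statement must carry the strict bound $s(j) < l$.
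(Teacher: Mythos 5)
Your proof is correct and follows essentially the same route as the paper's: the same case split on whether $\eta_l^j$ exists (using Lemma \ref{lemma1} plus $H_e^+ \leq \beta(e)$ in one case, and minimality of $\eta_l^j$ in the other) to obtain the strict inequality over $[l, s(j-1)-1]$, followed by subtracting the suffix inequality at $l_t^m$ to contradict (\ref{eqn:srt27}). Your added remarks on why $l_t^m$ is tied and why the bound at $s(j)$ must be strict are consistent with, and slightly more explicit than, the paper's treatment.
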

\vspace*{.15in}

\begin{proof}
For any index $l'$ satisfying $s(j) <l' < l_{t}^{m}$ with $l' \notin I \cap \{ l:l<l_{t}^{m}\} $, 
either $\eta_{l'}^{j}$ does not exist or $l'$ is not a tied index, i.e.,  $\eta_{l'}^{j}$ exists but 
satisfies $\eta_{l'}^{j} > \Gamma_{j}$.
In the former case, by Lemma \ref{lemma1} and the fact that $\alpha(e)$'s are positive, we have
\begin{equation}
 \sum_{e=l'}^{s(j-1)-1}\beta(e) < \sum_{e=l'}^{s(j-1)-1}\alpha(e)
\end{equation}
and so
\begin{equation}
 \sum_{e=l'}^{s(j-1)-1}H_{e}^{+}(\Gamma_{j}) \leq \sum_{e=l'}^{s(j-1)-1}\beta(e) < 
\sum_{e=l'}^{s(j-1)-1}\alpha(e).
\end{equation}
In the latter case, we must have
\begin{equation}\label{l5a}
\sum_{e=l'}^{s(j-1)-1}H_{e}^{+}(\Gamma_{j}) < \sum_{e=l'}^{s(j-1)-1}\alpha(e)
\end{equation}
for otherwise $\Gamma_{j}$ would be a strictly smaller choice for $\eta^{j}_{l'}$, contradicting the 
choice of $\eta_{l'}^j$. So, in either case, we have that (\ref{l5a}) holds. But we also have
\begin{equation}
\sum_{e=l^{m}_{t}}^{s(j-1)-1}H_{e}^{+}(\Gamma_{j}) \geq   \sum_{e=l^{m}_{t}}^{s(j-1)-1}\alpha(e).
\end{equation}
Subtracting the two inequalities, we get
\begin{equation}
 \sum_{e=l'}^{l^{m}_{t}-1}H_{e}^{+}(\Gamma_{j}) <   \sum_{e=l'}^{l^{m}_{t}-1}\alpha(e).
\end{equation}
i.e., $l'$ will not satisfy (\ref{eqn:srt27}), which is what we set out to prove.
\end{proof}

\vspace*{.15in}
The following lemma is a key inductive  step to show that the assignment problem in step 3 of 
Subroutine~\ref{subroutine2} reduces the problem to a similar but smaller problem after each 
iteration.

\vspace*{.15in}
\begin{lemma}\label{l6}
Suppose at the $m^{th}$ iteration in Subroutine~\ref{subroutine2}, we have indices 
$l_{0}^{m},l_{1}^{m},\cdots,l_{p_{m}}^{m},l_{p_{m}+1}^{m}=s(j)$. Let 
$M_{m}=\{s(j),s(j)+1,\cdots,l_{0}^{m}-1\}$ and with $I_{m}=\{l_{1}^m,l_{2}^m,\ldots,l_{p_{m}}^{m} 
\}$, let $I_{m} \cup \{s(j)\}$ be the indices $l$ that satisfy
\begin{equation}\label{l6a}
  \sum_{e=l}^{l^{m}_{0}-1}H_{e}^{-}(\Gamma_{j}) \leq \sum_{e=l}^{l^{m}_{0}-1}\alpha(e) \leq 
\sum_{e=l}^{l^{m}_{0}-1}H_{e}^{+}(\Gamma_{j}).
\end{equation}
Note that $I_{m} \subseteq M_{m}$.
For all indices $l \in M_{m} \backslash (I_{m}\cup \{s(j)\})$, we then have
\begin{equation}\label{l6b}
 \sum_{e=l}^{l^{m}_{0}-1}H_{e}^{+}(\Gamma_{j}) < \sum_{e=l}^{l^{m}_{0}-1}\alpha(e).
\end{equation}
Let $M_{m+1}$ be the set of indices in $M_{m}$ corresponding to $x(e)$'s that are not assigned in 
the $m^{th}$ iteration and let $I_{m+1}$ be the set obtained in step 5 of 
Subroutine~\ref{subroutine2}. The set $I_{m+1}$ and $M_{m+1} \backslash (I_{m+1}\cup \{s(j)\})$ 
satisfy properties (\ref{l6a}) and (\ref{l6b}), respectively, with $m$ replaced by $m+1$.
\end{lemma}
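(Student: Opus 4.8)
The plan is to establish the two assertions—that $I_{m+1}\cup\{s(j)\}$ are exactly the indices of $M_{m+1}$ satisfying (\ref{l6a}) with $m$ replaced by $m+1$, and that the complementary indices of $M_{m+1}$ satisfy (\ref{l6b}) with $m$ replaced by $m+1$—by combining the inductive dichotomy at stage $m$ with the minimality of $\gamma$ in (\ref{eqn:mingamma}). Throughout I write $l_0^{m+1}=l_t^m$ and $M_{m+1}=\{s(j),\ldots,l_t^m-1\}$, and I abbreviate $S(l):=\sum_{e=l}^{l_0^m-1}\left(\alpha(e)-H_e^-(\Gamma_j)\right)$, so that $\gamma=\min_{l\in I_m\cup\{s(j)\}}S(l)=S(l_t^m)$ and, for every $l<l_t^m$ in $M_m$, the telescoping identity $\sum_{e=l}^{l_t^m-1}\left(\alpha(e)-H_e^-(\Gamma_j)\right)=S(l)-\gamma$ holds. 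I may assume $l_t^m\neq s(j)$, since otherwise Subroutine~\ref{subroutine2} exits at its step~4 and there is no iteration $m+1$; thus $l_t^m\in I_m\subseteq I$.

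I would dispatch (\ref{l6b}) for $m+1$ first, as it is almost immediate. By construction $I_{m+1}$ collects the indices of $I\cap\{l<l_t^m\}$ obeying (\ref{eqn:srt27}), and Lemma~\ref{l5} upgrades this to the statement that $I_{m+1}$ contains \emph{every} index $l$ with $s(j)<l<l_t^m$ obeying (\ref{eqn:srt27}). Hence any $l\in M_{m+1}\setminus(I_{m+1}\cup\{s(j)\})$ violates (\ref{eqn:srt27}), which is precisely $\sum_{e=l}^{l_t^m-1}H_e^+(\Gamma_j)<\sum_{e=l}^{l_t^m-1}\alpha(e)$, i.e.\ (\ref{l6b}) for $m+1$. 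This same fact supplies the ``only if'' half of (\ref{l6a}) for $m+1$: any $l\in M_{m+1}$ satisfying the upper bound in (\ref{l6a}) cannot satisfy (\ref{l6b}), so it must lie in $I_{m+1}\cup\{s(j)\}$.

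It then remains to check that each member of $I_{m+1}\cup\{s(j)\}$ satisfies both inequalities of (\ref{l6a}) for $m+1$ on the range $[l,l_t^m-1]$. The upper bound is free for $l\in I_{m+1}$ (it is (\ref{eqn:srt27})), and for $l=s(j)$ it follows from Proposition~\ref{prop:3} applied to the tied index $l_t^m\in I$, giving $\sum_{e=s(j)}^{l_t^m-1}H_e^+(\Gamma_j)\geq\sum_{e=s(j)}^{l_t^m-1}\alpha(e)$. For the lower bounds I would route everything through the identity $\sum_{e=l}^{l_t^m-1}(\alpha(e)-H_e^-(\Gamma_j))=S(l)-\gamma$: since $s(j)$ is one of the candidates minimized over in (\ref{eqn:mingamma}), $S(s(j))\geq\gamma$ settles $l=s(j)$; and for $l\in I_{m+1}$ it suffices to show $S(l)\geq\gamma$, which holds as soon as $l\in I_m$ (so that $S(l)$ is one of the minimands). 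To obtain $l\in I_m$ I would add the upper bound (\ref{eqn:srt27}) on $[l,l_t^m-1]$ to the stage-$m$ upper bound on $[l_t^m,l_0^m-1]$ (valid because $l_t^m\in I_m$ satisfies (\ref{l6a}) at stage $m$), yielding $\sum_{e=l}^{l_0^m-1}H_e^+(\Gamma_j)\geq\sum_{e=l}^{l_0^m-1}\alpha(e)$; thus $l$ fails (\ref{l6b}) at stage $m$, and the inductive dichotomy forces $l\in I_m\cup\{s(j)\}$, whence $l>s(j)$ gives $l\in I_m$.

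The main obstacle is exactly this last inclusion $I_{m+1}\subseteq I_m$, since it is where one must convert the ``local'' upper-bound information on the shortened range $[l,l_t^m-1]$ into ``global'' stage-$m$ membership in order to invoke $S(l)\geq\gamma$. Once the split at $l_t^m$ is performed and the signs are tracked carefully—the only inequalities used are the two halves of the stage-$m$ hypothesis evaluated at $l_t^m$ and the minimality of $\gamma$—the lower bounds fall out and the characterization of $I_{m+1}\cup\{s(j)\}$ is complete. I would close by recording that $l_0^{m+1}=l_t^m$, so the produced sets $I_{m+1}$ and $M_{m+1}\setminus(I_{m+1}\cup\{s(j)\})$ have precisely the form required to reapply the lemma at the next iteration.
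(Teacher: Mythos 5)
Your proof is correct and follows essentially the same route as the paper's: the upper bound in (\ref{l6a}) for $I_{m+1}$ comes from (\ref{eqn:srt27}) and for $s(j)$ from Proposition~\ref{prop:3}, the lower bound from the minimality of $\gamma$ in (\ref{eqn:mingamma}), and (\ref{l6b}) from Lemma~\ref{l5}. The one place you go beyond the paper is in explicitly justifying $I_{m+1}\subseteq I_{m}$ (via the stage-$m$ dichotomy), which the paper's proof uses implicitly when it asserts $\gamma\leq\sum_{e=l}^{l_{0}^{m}-1}\bigl(\alpha(e)-H_{e}^{-}(\Gamma_{j})\bigr)$ for all $l\in I_{m+1}\cup\{s(j)\}$; this is a worthwhile clarification rather than a different argument.
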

\vspace*{.15in}

\begin{proof}
 Note that from step 2 of Subroutine~\ref{subroutine2}, $l_{t}^{m}$ is chosen so that
\begin{equation}
\gamma =  
\sum_{e=l^{m}_{t}}^{l^{m}_{0}-1}\alpha(e)-\sum_{e=l^{m}_{t}}^{l^{m}_{0}-1}H_{e}^{-}(\Gamma_{j}).
\end{equation}
With $l_{0}^{m+1}=l_{t}^{m}$, we also have
\begin{eqnarray*}
  M_{m+1} & = & \{ s(j),s(j)+1,\ldots,l_{0}^{m+1}-1\}, \\
  I_{m+1} & = & \{ l: l \in I,  l<l^{m}_{t}, \mbox{ and satisfies (\ref{eqn:srt27})}\}.
\end{eqnarray*}
We claim that
\begin{equation}
  \label{eqn:He+}
 \sum_{e=l}^{l^{m}_{t}-1}H_{e}^{+}(\Gamma_{j}) \geq \sum_{e=l}^{l^{m}_{t}-1}\alpha(e)~~~(l \in 
I_{m+1} \cup \{s(j)\}).
\end{equation}
Indeed, the inequalities hold true for $l \in I_{m+1}$  by construction and for $l = s(j)$ by 
Proposition \ref{prop:3}. Furthermore
\begin{equation}
\gamma =  
\sum_{e=l^{m}_{t}}^{l^{m}_{0}-1}\alpha(e)-\sum_{e=l^{m}_{t}}^{l^{m}_{0}-1}H_{e}^{-}(\Gamma_{j}) \leq 
\sum_{e=l}^{l^{m}_{0}-1}\alpha(e)-\sum_{e=l}^{l^{m}_{0}-1}H_{e}^{-}(\Gamma_{j})~~~(l \in I_{m+1} 
\cup \{s(j)\})
\end{equation}
which on rearrangement yields
\begin{equation}\label{l6c}
 \sum_{e=l}^{l^{m}_{t}-1}H_{e}^{-}(\Gamma_{j}) \leq \sum_{e=l}^{l^{m}_{t}-1}\alpha(e) ~~~(l \in 
I_{m+1} \cup \{s(j)\}).
\end{equation}
Since $l_{t}^{m}=l_{0}^{m+1}$, (\ref{l6c}) and (\ref{eqn:He+}) yield (\ref{l6a}) with $m+1$ in place 
of $m$. The analogue
 of property (\ref{l6b}) for $l \in M_{m+1} \backslash (I_{m+1}\cup \{s(j)\})$ follows from Lemma 
\ref{l5}.
\end{proof}

\vspace*{.15in}
\begin{lemma}\label{l7}
 Assume that $I_{m}$ and $M_{m}$ are as in Lemma \ref{l6} and such that (\ref{l6a}) holds for 
indices in $I_{m} \cup \{s(j)\}$ and (\ref{l6b}) for indices in $M_{m} \backslash (I_{m} \cup 
\{s(j)\})$. The assignments in (\ref{assign1}) and  (\ref{assign2}) in Subroutine~\ref{subroutine2} 
in a particular iteration satisfies
\begin{equation}\label{eqn:l7a}
 \sum_{e=l}^{l_{0}^{m}-1}x(e) \leq \sum_{e=l}^{l_{0}^{m}-1}\alpha(e)~~~(l_{t}^{m} \leq l < 
l_{0}^{m})
\end{equation}
with equality for $l=l_{t}^{m}$. Furthermore, it is possible to assign values to $x(e)$ as in step 3 
without violating the feasibility constraints.
\end{lemma}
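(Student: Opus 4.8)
The plan is to rewrite (\ref{eqn:l7a}) in an equivalent incremental form, verify the boundary equality, and then check both the feasibility of the range assignment and the inequality itself by splitting the index range into the block filled by (\ref{assign1}) and the block filled by (\ref{assign2}).

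First I would record the assigned values: by (\ref{assign2}), $x(e)=H_e^-(\Gamma_j)$ for $e\in\{l_t^m,\dots,l_1^m-1\}$, and by (\ref{assign1}) the block $\{l_1^m,\dots,l_0^m-1\}$ carries total mass $\gamma+\sum_{e=l_1^m}^{l_0^m-1}H_e^-(\Gamma_j)$. Summing these and using the identity $\gamma=\sum_{e=l_t^m}^{l_0^m-1}\alpha(e)-\sum_{e=l_t^m}^{l_0^m-1}H_e^-(\Gamma_j)$ from (\ref{eqn:mingamma}) gives $\sum_{e=l_t^m}^{l_0^m-1}x(e)=\sum_{e=l_t^m}^{l_0^m-1}\alpha(e)$, the claimed equality at $l=l_t^m$. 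Subtracting this from the partial sum over $\{l,\dots,l_0^m-1\}$ shows that (\ref{eqn:l7a}) is equivalent to $\sum_{e=l_t^m}^{l-1}x(e)\ge\sum_{e=l_t^m}^{l-1}\alpha(e)$ for $l_t^m<l<l_0^m$, which is the form I will actually prove.

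For feasibility I must check that $\gamma$ lies in the band achievable by (\ref{assign1}), namely $0\le\gamma\le\sum_{e=l_1^m}^{l_0^m-1}\bigl(H_e^+(\Gamma_j)-H_e^-(\Gamma_j)\bigr)$. Nonnegativity holds because every quantity minimized in (\ref{eqn:mingamma}) is nonnegative by the left inequality of the hypothesis (\ref{l6a}); the upper bound follows by bounding $\gamma$ by the term at $l_1^m$ in (\ref{eqn:mingamma}) and then applying the right inequality of (\ref{l6a}) at $l_1^m$. On the block $\{l_1^m,\dots,l_0^m-1\}$ the inequality is then easy: for a non-tied $l>l_1^m$ one has $x(e)\le H_e^+(\Gamma_j)$, so (\ref{l6b}) gives $\sum_{e=l}^{l_0^m-1}x(e)\le\sum_{e=l}^{l_0^m-1}H_e^+(\Gamma_j)<\sum_{e=l}^{l_0^m-1}\alpha(e)$, while for $l=l_1^m$ the bound on $\gamma$ just used is exactly the required inequality.

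It remains to treat the block $\{l_t^m,\dots,l_1^m-1\}$, where $x(e)=H_e^-(\Gamma_j)$, and here the claim reduces to $\sum_{e=l_t^m}^{l-1}\alpha(e)\le\sum_{e=l_t^m}^{l-1}H_e^-(\Gamma_j)$ for $l_t^m<l\le l_1^m$. I expect this to be the main obstacle, because the minimum in (\ref{eqn:mingamma}) is taken only over the tied indices $l_i^m$: at a tied $l$ the inequality follows by telescoping $\gamma\le$ the $l_i^m$-term against the defining identity for $\gamma$ (using that $t$ is the \emph{largest} minimizer, so $l_t^m$ is lowest), but at an intermediate non-tied $l$ no such term is available. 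The device I would use is to bridge each non-tied $l$ to the nearest tied index $l_i^m$ strictly above it: adding the strict inequality (\ref{l6b}) at $l$ to the right inequality of (\ref{l6a}) at $l_i^m$ cancels the common tail and yields $\sum_{e=l}^{l_i^m-1}\alpha(e)>\sum_{e=l}^{l_i^m-1}H_e^+(\Gamma_j)\ge\sum_{e=l}^{l_i^m-1}H_e^-(\Gamma_j)$; combining this with the inequality already established at $l_i^m$ gives the bound at $l$. Finally I would dispatch the degenerate cases $p_m=0$ (empty $I_m$, so $l_1^m=s(j)$ and the second block is empty) and $t=1$ (empty (\ref{assign2}) block), each of which collapses to the arguments above.
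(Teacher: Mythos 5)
Your proposal is correct and follows essentially the same route as the paper's proof: the same sandwich argument (using $0\le\gamma\le\sum_{e=l_1^m}^{l_0^m-1}\alpha(e)-\sum_{e=l_1^m}^{l_0^m-1}H_e^-(\Gamma_j)\le\sum_{e=l_1^m}^{l_0^m-1}\bigl(H_e^+(\Gamma_j)-H_e^-(\Gamma_j)\bigr)$) to show (\ref{assign1}) is realizable, the same use of the minimality of $\gamma$ at tied indices, and the same appeal to (\ref{l6b}) at non-tied ones. The only divergence is the step you flag as the ``main obstacle'': having reformulated the claim as a lower bound on $\sum_{e=l_t^m}^{l-1}x(e)$, you need the bridge to the nearest tied index above a non-tied $l\in(l_t^m,l_1^m)$, whereas the paper keeps all sums anchored at $l_0^m-1$ and dispatches such $l$ in one line via $\sum_{e=l}^{l_0^m-1}x(e)\le\sum_{e=l}^{l_0^m-1}H_e^+(\Gamma_j)<\sum_{e=l}^{l_0^m-1}\alpha(e)$; your detour is valid but avoidable.
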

\vspace*{.15in}

\begin{proof}
 Since $l^{m}_{t} \in I_{m}$, the left inequality in (\ref{l6a}) of Lemma \ref{l6} implies
\begin{equation}
 \sum_{e=l^{m}_{t}}^{l_{0}^{m}-1}H_{e}^{-}(\Gamma_{j}) \leq 
\sum_{e=l^{m}_{t}}^{l_{0}^{m}-1}\alpha(e).
\end{equation}
After rearrangement, we get
\begin{equation}
0 \leq \gamma = \sum_{e=l^{m}_{t}}^{l_{0}^{m}-1}\alpha(e) - 
\sum_{e=l^{m}_{t}}^{l_{0}^{m}-1}H_{e}^{-}(\Gamma_{j})
\leq \sum_{e=l^{m}_{1}}^{l_{0}^{m}-1}\alpha(e) - 
\sum_{e=l^{m}_{1}}^{l_{0}^{m}-1}H_{e}^{-}(\Gamma_{j}).
\end{equation}
because of the choice of $t$ attaining the minimum in (\ref{eqn:mingamma}).
Adding $\sum_{e=l_{1}^{m}}^{l_{0}^{m}-1}H_{e}^{-}(\Gamma_{j})$, we get
\begin{equation} \label{l7c}
 \sum_{e=l_{1}^{m}}^{l_{0}^{m}-1}H_{e}^{-}(\Gamma_{j}) \leq \gamma + 
\sum_{e=l_{1}^{m}}^{l_{0}^{m}-1}H_{e}^{-}(\Gamma_{j}) \leq \sum_{e=l^{m}_{1}}^{l_{0}^{m}-1}\alpha(e) 
\stackrel{(a)}{\leq} \sum_{e=l_{1}^{m}}^{l_{0}^{m}-1}H_{e}^{+}(\Gamma_{j})
\end{equation}
where (a) follows because $l_{1}^{m} \in I_{m}$.
Also, any $l$ satisfying $l_{1}^{m} < l < l_{0}^{m}$ is not in $I_{m}$ and hence by Lemma \ref{l6}
\begin{equation}\label{l7d}
 \sum^{l_{0}^{m}-1}_{e=l}H_{e}^{+}(\Gamma_{j}) \leq \sum^{l_{0}^{m}-1}_{e=l}\alpha(e)~~~(l_{1}^{m} < 
l < l_{0}^{m}).
\end{equation}
From (\ref{l7c}) and  (\ref{l7d}), it is evident that there exists an assignment for $x(e) \in 
[H_e^-(\Gamma_j), H_e^+(\Gamma_j)]$, when $l_{1}^{m} \leq e < l_{0}^{m}$, that gives equality in 
(\ref{assign1}) without violating the feasibility constraints for $l_{1}^{m} \leq e < l_{0}^{m}$, 
i.e., without violating (\ref{eqn:l7a}).

Now, for indices $l$ with $l_{t}^{m} < l < l_{1}^{m}$  and $l \notin I_{m}$ assigning $x(e)$'s  
according to (\ref{assign2}) does not violate feasibility constraints (\ref{eqn:l7a}) because we 
have
\begin{equation}\label{eqn:l7e}
 \sum^{l_{0}^{m}-1}_{e=l}H_{e}^{+}(\Gamma_{j}) < \sum^{l_{0}^{m}-1}_{e=l}\alpha(e).
\end{equation}
Indeed, $x(e)=H_{e}^{-}(\Gamma_{j})$ for $l_{t}^{m} \leq e < l_{1}^{m}$ and $x(e) \leq 
H_{e}^{+}(\Gamma_{j})$
for $l_{1}^{m} \leq e < l_{0}^{m}$ and therefore (\ref{eqn:l7e}) implies
\begin{equation}
\sum^{l_{0}^{m}-1}_{e=l}x(e) \leq  \sum^{l_{0}^{m}-1}_{e=l}H_{e}^{+}(\Gamma_{j}) < 
\sum^{l_{0}^{m}-1}_{e=l}\alpha(e),
\end{equation}
which shows  (\ref{eqn:l7a}).

For indices $l$ with $l_{t}^{m} < l < l_{1}^{m}$ and $l \in I_{m}\cup \{s(j)\}$ assignment of 
$x(e)$'s  according to (\ref{assign2}) is also feasible because
\begin{eqnarray*}
 \sum^{l_{0}^{m}-1}_{e=l}x(e) & =  & \left( \gamma + 
\sum^{l_{0}^{m}-1}_{e=l_{1}^{m}}H_{e}^{-}(\Gamma_{j}) \right) + 
\sum^{l_{1}^{m}-1}_{e=l}H_{e}^{-}(\Gamma_{j}) \\
 & =  & \gamma + \sum^{l_{0}^{m}-1}_{e=l}H_{e}^{-}(\Gamma_{j})  \\
& \stackrel{(b)}{\leq} &    \sum^{l_{0}^{m}-1}_{e=l}\alpha(e) - 
\sum^{l_{0}^{m}-1}_{e=l}H_{e}^{-}(\Gamma_{j}) +\sum^{l_{0}^{m}-1}_{e=l}H_{e}^{-}(\Gamma_{j})  \\
& = & \sum^{l_{0}^{m}-1}_{e=l}\alpha(e)
\end{eqnarray*}
where (b) follows because $\gamma$ is the minimum in (\ref{eqn:mingamma}) among all $l \in I_{m} 
\cup \{s(j)\}$. Moreover, the inequality is an equality when $l=l_{t}^{m}$ because the minimum is 
then attained. This proves that the assignment in step 3 is feasible.
\end{proof}

\vspace*{.15in}
\begin{lemma}\label{l8}
 (Correctness of the algorithm)
If the feasible set is nonempty, Algorithm \ref{algorithm} runs to completion and puts out a 
feasible vector.
\end{lemma}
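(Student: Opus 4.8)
The plan is to reduce the statement to three verifications — that the algorithm is well defined and halts, that each assigned value lies in $[0,\beta(e)]$, and that the output obeys the ascending and equality constraints — using the lemmas already proved. First I would settle termination. By the Proposition following Lemma \ref{lemma1}, nonemptiness of the feasible set forces $l=1$ to satisfy (\ref{eqn:one}) at every iteration, so the minimum $\Gamma_j$ in (\ref{eqn:gamma}) is taken over a nonempty set and is well defined. Subroutine \ref{subroutine1} returns an index $s(j)$ that is one of the tied indices $l_1>l_2>\cdots>l_r$, each strictly below $s(j-1)$, whence $s(j)<s(j-1)$. Since $s(0)=n+1$ and the $s(j)$ strictly decrease while staying at least $1$, the value $s(j)=1$ is reached in at most $n$ iterations and the algorithm exits; moreover the successive blocks $\{s(j),\ldots,s(j-1)-1\}$ tile $\{1,\ldots,n\}$, so every variable is assigned exactly once. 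That Subroutine \ref{subroutine2} itself completes within each outer iteration, filling in all $x(e)$ for $e\in\{s(j),\ldots,s(j-1)-1\}$, is precisely what Lemmas \ref{l5}, \ref{l6} and \ref{l7} guarantee through their inductive reduction to a smaller sub-block.

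The box constraints (\ref{eqn:positivityBounded}) require no further work: Subroutine \ref{subroutine2} only ever sets $x(e)\in[H_e^-(\Gamma_j),H_e^+(\Gamma_j)]$, and by definition $0\le H_e^-(\Gamma_j)\le H_e^+(\Gamma_j)\le\beta(e)$. The substance lies in the ascending and equality constraints, for which the engine is Lemma \ref{l7}: it furnishes, at each sub-iteration, $\sum_{e=l}^{l_0^m-1}x(e)\le\sum_{e=l}^{l_0^m-1}\alpha(e)$ with equality at the split index $l=l_t^m$. Chaining these boundary equalities over the sub-iterations of a single outer iteration (legitimized by the induction in Lemma \ref{l6}) yields the block equality $\sum_{e=s(j)}^{s(j-1)-1}x(e)=\sum_{e=s(j)}^{s(j-1)-1}\alpha(e)$, and combining the per-sub-block bound with the equalities of the already-completed higher sub-blocks promotes it to the global reverse bound $\sum_{e=l}^{s(j-1)-1}x(e)\le\sum_{e=l}^{s(j-1)-1}\alpha(e)$ valid for every $l$ in the block. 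Summing the block equalities over all iterations, whose blocks tile $\{1,\ldots,n\}$, gives (\ref{eqn:equalityConstraint}).

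For the ascending constraints (\ref{eqn:ladderConstraint}), fix $l$ and let $j$ be the iteration with $s(j)\le l<s(j-1)$. I would split $\sum_{e=1}^{l}x(e)=\sum_{e=1}^{s(j)-1}x(e)+\sum_{e=s(j)}^{l}x(e)$. The first summand equals $\sum_{e=1}^{s(j)-1}\alpha(e)$ by summing the block equalities of the lower iterations, which cover $\{1,\ldots,s(j)-1\}$. The second summand is bounded below by $\sum_{e=s(j)}^{l}\alpha(e)$: writing it as the block equality minus the global reverse bound evaluated at $l+1$, the equality supplies the full $\alpha$-mass of the block while the reverse bound removes at most the $\alpha$-mass of the tail. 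Adding the two pieces gives $\sum_{e=1}^{l}x(e)\ge\sum_{e=1}^{l}\alpha(e)$, which is (\ref{eqn:ladderConstraint}).

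The main obstacle will be bookkeeping the direction of the inequalities rather than any deep estimate. Lemma \ref{l7} controls reverse partial sums — summed downward from $s(j-1)-1$ — from above, whereas the ascending constraints demand lower bounds on forward partial sums. The conversion rests on the exact equalities holding at the boundaries $s(j)$ and $l_t^m$, which alone allow an upper bound on a reverse tail to be turned into a lower bound on the complementary forward head; consequently all the care is in tracking which index ranges are governed by equalities and which by inequalities, and in verifying that the sub-block structure of Subroutine \ref{subroutine2} chains up correctly across an entire outer iteration.
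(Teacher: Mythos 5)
Your proposal is correct and follows essentially the same route as the paper: both establish termination via the strict decrease of $s(j)$, dispose of the box constraints via $0\le H_e^-\le H_e^+\le\beta(e)$, and then use Lemmas \ref{l6} and \ref{l7} inductively to obtain the reverse partial-sum bounds $\sum_{e=l}^{s(j-1)-1}x(e)\le\sum_{e=l}^{s(j-1)-1}\alpha(e)$ with equality at the block boundaries $s(j)$, from which the equality and ascending constraints follow. The only cosmetic difference is that you convert back to forward partial sums at the end by splitting at $s(j)$, whereas the paper states the whole argument in terms of the equivalent reverse sums $\sum_{e=l}^{n}x(e)\le\sum_{e=l}^{n}\alpha(e)$.
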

\vspace*{.15in}

\begin{proof}
Since  $H_{e}^{+}$ and  $H_{e}^{-}$ assume values between $0$ and $\beta(e)$, constraint 
(\ref{eqn:positivityBounded}) is trivially satisfied. It is also straightforward to see that 
constraint (\ref{eqn:ladderConstraint}) is satisfied if and only if
\begin{equation}\label{l8a}
 \sum_{e=l}^{n}x(e) \leq \sum_{e=l}^{n}\alpha(e)~~~(1 < l \leq n)
\end{equation}
hold. Inequalities (\ref{l8a}) are obtained by subtracting (\ref{eqn:ladderConstraint}) from 
(\ref{eqn:equalityConstraint}). Now we show that the vector put out by Algorithm \ref{algorithm} 
satisfies (\ref{l8a}) and (\ref{eqn:equalityConstraint}).

Consider the first iteration of  Algorithm \ref{algorithm}. Since the feasible set is nonempty, from 
Lemma \ref{lemma1}, it follows that there exist an $\eta$ that solves inequality (\ref{eqn:zero}). 
Hence the set over which minimum is taken in (\ref{eqn:gamma}) is nonempty. 
Subroutine~\ref{subroutine1} sets the value of $s(1)$. In the first iteration of 
Subroutine~\ref{subroutine2}, observe that $M_{0}= \{s(1),s(1)+1,\ldots,n\}$ and 
$I_{0}=I=\{l_{1},l_{2},\ldots,l_{p}\}$ a subset of the indices that attain the minimum in 
(\ref{eqn:gamma}), with $p$ the smallest index such that $l_{p}$ is strictly greater than $s(1)$. 
Clearly $I_{0} \cup \{s(1)\}$ and $M_{0} \backslash (I_{0} \cup \{s(1)\})$ satisfy (\ref{l6a}) and 
(\ref{l6b}), respectively. By Lemma \ref{l6} and by induction, in every iteration of 
Subroutine~\ref{subroutine2}, $I_{m} \cup \{s(1)\}$ and $M_{m} \backslash (I_{m} \cup \{s(1)\})$ 
satisfy (\ref{l6a}) and (\ref{l6b}), respectively. Moreover by Lemma \ref{l7}, in every iteration of 
Subroutine~\ref{subroutine2}, we allocate  $x(e)$ for $l_{t}^{m} \leq l < l_{0}^{m}$ such that
\begin{equation}
 \sum_{e=l}^{l_{0}^{m}-1}x(e) \leq \sum_{e=l}^{l_{0}^{m}-1}\alpha(e)
\end{equation}
with equality for $l=l_{t}^{m}$. By induction on $m$, and observing that $l_{0}^{0}=n+1$, the output 
of Subroutine~\ref{subroutine2} satisfies
\begin{equation}\label{l8b}
 \sum_{e=l}^{n}x(e) \leq \sum_{e=l}^{n}\alpha(e)~~~(s(1) \leq l \leq n),
\end{equation}
with equality for $l=s(1)$.

Now assume (\ref{l8b}) is true for $l$ satisfying $s(j-1) \leq l \leq n $. i.e.,
\begin{equation}\label{l8c}
 \sum_{e=l}^{n}x(e) \leq \sum_{e=l}^{n}\alpha(e)~~~(s(j-1) \leq l \leq n),
\end{equation}
with equality for $l=s(j-1)$.
Consider the $j^{th}$ iteration of Algorithm \ref{algorithm}. Observe that 
$M_{0}=\{s(j),s(j)+1,\ldots,s(j-1)-1\}$ and $I_{0}=I=\{l_{1},l_{2},\ldots,l_{p}\}$, a subset of 
indices that attain the minimum in (\ref{eqn:gamma}) with $p$  the smallest index such that $l_{p}$ 
is strictly larger than $s(j)$.
Clearly indices in $I_{0} \cup \{s(j)\}$ and $M_{0} \backslash (I_{0}\cup\{s(j)\})$ satisfy 
(\ref{l6a}) and (\ref{l6b}), respectively. By Lemma  \ref{l6} and by induction, in every iteration 
of Subroutine~\ref{subroutine2}, $I_{m} \cup \{s(j)\}$ and $M_{m} \backslash (I_{m} \cup \{s(j)\})$ 
satisfy (\ref{l6a}) and (\ref{l6b}), respectively.

Moreover, by Lemma \ref{l7}, in every iteration of Subroutine~\ref{subroutine2}, we allocate $x(e)$ 
for $l_{t}^{m} \leq e < l_{0}^{m}$ such that
\begin{equation}
 \sum^{l_{0}^{m}-1}_{e=l} x(e)  \leq  \sum_{e=l}^{l_{0}^{m}-1} \alpha(e)~~~~(l_{t}^{m} \leq l < 
l_{0}^{m} )
\end{equation}
 with equality for $l=l_{t}^{m}$. By induction, the output of Subroutine~\ref{subroutine2} satisfies
\begin{equation} \label{l8d}
 \sum^{s(j-1)-1}_{e=l} x(e)  \leq  \sum_{e=l}^{s(j-1)-1} \alpha(e)~~~~(s(j) \leq l < s(j-1) )
\end{equation}
Combining (\ref{l8c}) and (\ref{l8d}), we see that (\ref{l8d}) holds for $l$ satisfying $s(j) \leq l 
\leq n$ with equality for $l=s(j)$. By induction once again on the $j$ iterations we have
\begin{equation}
  \sum^{n}_{e=l} x(e)  \leq  \sum_{e=l}^{n} \alpha(e)~~~~(1 \leq l \leq n )
\end{equation}
with equality for $l=1$. We have thus verified feasibility.
\end{proof}

\vspace*{.15in}
What remains is the proof of optimality.

\vspace*{.15in}
\begin{lemma}\label{l9}
The vector $x(e)$, $e \in E$, put out by Algorithm \ref{algorithm} is optimal.
\end{lemma}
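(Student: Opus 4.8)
The plan is to verify the optimality condition of Theorem~\ref{thm:opt}: for every exchangeable pair $(u,e)$ associated with the output base $x$, one must show $w_e^+(x(e)) \geq w_u^-(x(u))$. The key observation is that the algorithm assigns values in \emph{blocks} indexed by the successive iterates, where the block assigned at iteration $j$ consists of the indices $\{s(j), s(j)+1, \ldots, s(j-1)-1\}$, and every variable in this block is assigned a value $x(e) \in [H_e^-(\Gamma_j), H_e^+(\Gamma_j)]$. First I would establish what this containment says about the right and left derivatives. By the definitions of $H_e^+$ and $H_e^-$ together with the generalized inverses $i_e^\pm$, the membership $x(e) \in [H_e^-(\Gamma_j), H_e^+(\Gamma_j)]$ translates (away from the saturated endpoints $0$ and $\beta(e)$) into the derivative sandwich $w_e^-(x(e)) \leq \Gamma_j \leq w_e^+(x(e))$; at the endpoints one gets the appropriate one-sided version. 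The crucial quantitative fact is Lemma~\ref{l4}, which guarantees $\Gamma_1 < \Gamma_2 < \cdots < \Gamma_k$, so that variables in earlier-assigned blocks (higher indices) carry strictly larger $\Gamma$-values than variables in later-assigned blocks (lower indices).

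Next I would analyze the structure of the exchangeable pairs $(u,e)$ with $u \in \textsf{dep}(x,e,g) - \{e\}$ for the polymatroid $g = f_\beta$ arising from the ascending constraints. The plan is to show that $\textsf{dep}(x,e,f_\beta)$ is controlled by the tight constraints, i.e., the indices $l$ where the partial-sum constraint \eqref{eqn:ladderConstraint} holds with equality. From the feasibility proof (Lemma~\ref{l8}), equality $\sum_{e=l}^n x(e) = \sum_{e=l}^n \alpha(e)$ holds precisely at the breakpoints $l = s(j)$, and these are the saturated sets of the polymatroid. The goal is to argue that if $(u,e)$ is exchangeable, then mass can flow from $u$ to $e$ only when $u$ and $e$ lie in a configuration where $e$ is in a block assigned no later than the block containing $u$ — equivalently, the index $e$ belongs to an earlier (higher-$\Gamma$) iterate or the same iterate as $u$, so that $\Gamma_{j(e)} \geq \Gamma_{j(u)}$, where $j(\cdot)$ denotes the iterate assigning that index.

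With these two pieces in hand, the conclusion follows by combining the derivative sandwich with the monotonicity of the $\Gamma$'s. Concretely, for an exchangeable pair $(u,e)$ I would write $w_e^+(x(e)) \geq \Gamma_{j(e)}$ and $\Gamma_{j(u)} \geq w_u^-(x(u))$, and then invoke $\Gamma_{j(e)} \geq \Gamma_{j(u)}$ to chain these into $w_e^+(x(e)) \geq \Gamma_{j(e)} \geq \Gamma_{j(u)} \geq w_u^-(x(u))$, which is exactly Groenevelt's condition. Care is needed for the saturated cases: if $x(e) = \beta(e)$ then $e$ cannot receive additional mass, so $e \notin \textsf{dep}(x,u,g)-\{u\}$ for the reverse direction and the relevant inequality is vacuous or automatic; if $x(u) = 0$ then mass cannot leave $u$, so $(u,e)$ is not exchangeable in the first place. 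The main obstacle I anticipate is the combinatorial bookkeeping in the second step: precisely characterizing $\textsf{dep}(x,e,f_\beta)$ in terms of the iterate structure and the tight ascending constraints, and verifying that an exchangeable pair indeed forces $\Gamma_{j(e)} \geq \Gamma_{j(u)}$ rather than the reverse. This requires carefully using the fact (from Proposition~\ref{prop:3} and Lemma~\ref{l8}) that the constraints are tight exactly at the $s(j)$ breakpoints, so that any subset witnessing dependence of $e$ must reach across to a block whose $\Gamma$-value dominates, and this is where the strict monotonicity of Lemma~\ref{l4} does the decisive work.
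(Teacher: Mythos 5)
Your proposal follows essentially the same route as the paper's proof: verify Groenevelt's condition of Theorem~\ref{thm:opt} via the sandwich $w_e^-(x(e)) \leq \Gamma_j \leq w_e^+(x(e))$ coming from $x(e) \in [H_e^-(\Gamma_j), H_e^+(\Gamma_j)]$, use the tight suffix sets $A_j = \{s(j),\ldots,n\}$ to show $\textsf{dep}(x,e,f_\beta) \subseteq A_{j(e)}$ so that any $u$ in an exchangeable pair was assigned at an iterate $j(u) \leq j(e)$, invoke Lemma~\ref{l4} to chain $w_e^+(x(e)) \geq \Gamma_{j(e)} \geq \Gamma_{j(u)} \geq w_u^-(x(u))$, and dispose of the saturated cases $x(e)=\beta(e)$ and $x(u)=0$ exactly as you describe. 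One small correction of wording: your parenthetical ``earlier (higher-$\Gamma$) iterate'' has the direction reversed --- by Lemma~\ref{l4} earlier iterates carry \emph{smaller} $\Gamma$, and it is $u$ (not $e$) that must lie in a block assigned no later, which is what yields $\Gamma_{j(u)} \leq \Gamma_{j(e)}$ --- but your final inequality chain is the correct one and matches the paper.
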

\vspace*{.15in}

\begin{proof}
We  use Theorem \ref{thm:opt} to prove the optimality of $x$. Let $g = f_{\beta}$. In each 
iteration, at least one variable gets set. So Algorithm \ref{algorithm} terminates after $\tau \leq 
n$ steps. Define $A_{0} = \emptyset$ and set $A_{j} =\{s(j),\cdots,n\} $ for $j=1,2,\cdots,\tau$. 
Observe that $A_{\tau}=E$. It is then an immediate consequence that  $x(A_{j}) = f(A_{j})$. By the 
definition of $\textsf{dep}$, we have  $\textsf{dep}(x,e,f) \subseteq A_{j}$ for every $e \in 
A_{j}-A_{j-1},~ j=1,\cdots,\tau$, and $\textsf{dep}(x,e,g) \subseteq \textsf{dep}(x,e,f)$ for an $x 
\in P(f) \cap P(g)$. Also, observe that  $\textsf{dep}(x,e,g) = \{e\}$ for every $e$ satisfying 
$x(e) = \beta(e)$, and a $u \neq e$ with $x(u) = 0$ cannot belong to  $\textsf{dep}(x,e,g)$.  This 
observation implies that for any $u \in \textsf{dep}(x,e,g)$, $u \neq e$, we must have $x(e) < 
\beta(e)$ and $0 < x(u)$. We then  claim that
\[
 w_{e}^{+}(x(e)) \stackrel{(a)}{\geq} w_{e}^{+}(H_e^{-}(\Gamma_{j})) \stackrel{(b)}{\geq} \Gamma_{j} 
\stackrel{(c)}{\geq} \Gamma_{i}  \stackrel{(d)}{\geq}  w_{u}^{-}(H_{u}^{+}(\Gamma_{i})) 
\stackrel{(e)}{\geq}  w_{u}^{-}(x(u)).
 \]
Inequality (a)  holds since $w_{e}^{+}$ is nondecreasing and $x(e) \geq H_{e}^{-}(\Gamma_{j})$. 
Inequality (b) follows from the definition of $H_{e}^{-}$, after noting that $x(e) < \beta(e)$. 
Since $(e,u)$ is an exchangeable pair and $e \in A_{j}-A_{j-1}$, we must have that $u \in A_i - 
A_{i-1}$ for some $i \leq j$, and  (c) follows from Lemma \ref{l4}. Inequality (d) follows from the 
definition of $w_{u}^{-}$ after noting that $x(u) > 0$. Finally (e) holds  since $w_{u}^{-}$ is 
nondecreasing and $x(u) \leq H_{u}^{+}(\Gamma_{i})$. The sufficient condition of Theorem 
\ref{thm:opt} for optimality holds, and the proof is complete.
\end{proof} 
\vspace*{.15in}

Lemma \ref{l8} and Lemma \ref{l9} imply Theorem \ref{th3}, and its proof is now complete. $\hfill 
\Box$

\end{document}